\newtheorem{theorem}{Theorem}[section]
\newtheorem{proposition}[theorem]{Proposition}
\newtheorem{conjecture}[theorem]{Conjecture}
\newtheorem{claim}[theorem]{Claim}
\newtheorem{remark}[theorem]{Remark}
\newtheorem{definition}[theorem]{Definition}
\newcommand{\R}{\mathbb R}
\newcommand{\F}{\mathbb F}
\title{Bourgain-type projection theorems over finite fields}
\author{Alex Rose}
\date{}
\begin{document}

\begin{abstract}
We prove finite-field analogs of Bourgain's projection theorem in higher dimensions. In particular, for a certain range of parameters we improve on an exceptional set estimate by Chen in all dimensions and codimensions.
\end{abstract}

\maketitle

\section{Introduction}
\subsection{Over real numbers}
Fractal dimensions of orthogonal projections over real numbers were extensively studied. For example, the following bounds on the dimension of the exceptional set were obtained in works by Kaufman \cite{Kau68}, Mattila \cite{Mat75}, Falconer \cite{Fal82}, Peres and Schlag \cite{Per00}.

\begin{theorem}\label{Kaufman}
    Let $K \subseteq \mathbb{R}^n$ be a Borel set. The following bounds hold: 
    \begin{itemize}
        \item If $\dim K \leq m$ and $0 < t \leq \dim K$, then 
        $$ \dim\{W \in Gr(n, m) \mid \dim \pi_W(K) \leq t\} \leq m(n - m) - (m - t).$$
        \item If $\dim K > m$, then 
        $$ \dim\{W \in Gr(n, m) \mid \mathcal{H}^m(\pi_W(K)) = 0\} \leq m(n - m) - (\dim K - m).$$
        \item If $\dim K > 2m$, then
        $$ \dim\{W \in Gr(n, m) \mid Int(\pi_W(K)) = \varnothing\} \leq m(n - m) - (\dim K - 2m).$$
    \end{itemize}
\end{theorem}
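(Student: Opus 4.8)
The three estimates are the classical exceptional-set bounds for orthogonal projections, and my plan is to prove all three by the energy/Fourier method together with the transversality of the projection family on the Grassmannian; since the three arguments share a common skeleton, I describe it once and then indicate the variations.

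After replacing $K$ by a compact subset whose dimension is arbitrarily close to $\dim K$ (so that it suffices to treat compact $K$), suppose for contradiction that the relevant exceptional set $E\subseteq Gr(n,m)$ has dimension strictly larger than the claimed bound. By Frostman's lemma there is a Radon probability measure $\nu$ on $E$ with $\nu(B(W,\rho))\lesssim\rho^{\sigma}$ for some $\sigma$ still exceeding the claimed bound, together with a probability measure $\mu$ on $K$ with $\mu(B(x,r))\lesssim r^{s}$ for some $s<\dim K$ chosen close to $\dim K$ according to the part at hand. The geometric heart of each argument is the same estimate on $\nu$: for a unit vector $e$, the tangency locus $\{W:\,e\perp W\}\cong Gr(n-1,m)$ has codimension $m$ in $Gr(n,m)$, while the incidence locus $\{W:\,e\in W\}\cong Gr(n-1,m-1)$ has codimension $n-m$; since $\{W:|\pi_W e|\le\delta\}$ and $\{W:\operatorname{dist}(e,W)\le\delta\}$ lie in the $O(\delta)$-neighborhoods of the tangency and incidence loci respectively, covering by $\delta$-balls and inserting the Frostman bound on $\nu$ yields, for $0<\delta<1$,
\[
\nu\bigl(\{W:\,|\pi_W e|\le\delta\}\bigr)\lesssim\delta^{\,\sigma-m(n-m)+m},
\qquad
\nu\bigl(\{W:\,\operatorname{dist}(e,W)\le\delta\}\bigr)\lesssim\delta^{\,\sigma-m(n-m)+(n-m)}.
\]

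For part (1), pick $s\in(t,\dim K)$ and estimate the averaged Riesz energy $\int_E I_{\tau}(\pi_{W\#}\mu)\,d\nu(W)$ for a suitable $\tau\in(t,s)$. Since $I_{\tau}(\pi_{W\#}\mu)=\iint|x-y|^{-\tau}|\pi_W e_{x,y}|^{-\tau}\,d\mu(x)\,d\mu(y)$ with $e_{x,y}=(x-y)/|x-y|$, Tonelli and the first tangency estimate (followed by a layer-cake integration) give $\int_E|\pi_W e|^{-\tau}\,d\nu(W)\lesssim 1$ uniformly in $e$ provided $\tau<\sigma-m(n-m)+m$; as $\sigma>m(n-m)-(m-t)$, this holds for some $\tau>t$, and the averaged energy is then $\lesssim I_{\tau}(\mu)<\infty$. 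Hence $I_{\tau}(\pi_{W\#}\mu)<\infty$, so $\dim\pi_W K\ge\tau>t$, for $\nu$-a.e.\ $W$, contradicting $E=\{W:\dim\pi_W K\le t\}$.

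Parts (2) and (3) use the Fourier side via $\widehat{\pi_{W\#}\mu}=\widehat{\mu}|_W$. In part (2) I would estimate
\[
\int_E\|\pi_{W\#}\mu\|_{L^2(W)}^2\,d\nu(W)=\int_E\int_W|\widehat{\mu}(\xi)|^2\,d\mathcal H^m\xi\,d\nu(W),
\]
and a dyadic decomposition in $|\xi|$ combined with the second tangency estimate bounds this by a constant times $\int_{\R^n}|\widehat{\mu}(\xi)|^2|\xi|^{s-n}\,d\xi=c\,I_s(\mu)<\infty$ as soon as $\sigma>m(n-m)-(\dim K-m)$; thus $\pi_{W\#}\mu\in L^2(W)$, whence $\mathcal L^m(\pi_W K)>0$, for $\nu$-a.e.\ $W$. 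In part (3) I would estimate $\int_E\|\widehat{\pi_{W\#}\mu}\|_{L^1(W)}\,d\nu(W)$; discarding the harmless range $|\xi|\le 1$ and applying Cauchy--Schwarz on each $m$-plane against a weight $|\xi|^{a}$, one needs $a>m$ (so that $\int_{W,\,|\xi|>1}|\xi|^{-a}\,d\mathcal H^m\xi<\infty$) and $a<\dim K-m-(m(n-m)-\sigma)$ (so that the part-(2) estimate carrying the extra weight $|\xi|^{a}$ still converges), and such an $a$ exists precisely when $\dim K>2m$ and $\sigma>m(n-m)-(\dim K-2m)$. Then $\widehat{\pi_{W\#}\mu}\in L^1(W)$, so $\pi_{W\#}\mu$ has a continuous density of total mass $1$, which is positive on a nonempty open set contained in the compact set $\pi_W K$, for $\nu$-a.e.\ $W$. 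In each part we contradict $\nu(E)=1$.

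I expect the main obstacle to be the sharp bookkeeping in the two Fourier estimates: the dyadic decomposition in $|\xi|$ must be arranged so that the gain from the Frostman decay of $\nu$ near the incidence locus exactly cancels the loss $|\xi|^{m-n}$ coming from restricting a measure to an $m$-plane (and, in part (3), the additional weight $|\xi|^{a}$), and the high-frequency contributions must be controlled uniformly in $W$. Secondary points are the degenerate case $t=\dim K$ of (1) --- where the statement must be read with the strict inequality $\dim\pi_W K<t$, since for $t=\dim K<m$ the set $\{W:\dim\pi_W K\le t\}$ is all of $Gr(n,m)$ --- and the routine reductions to compact $K$ and to closures of projected sets.
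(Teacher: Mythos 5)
This theorem is stated in the paper only as background --- it is attributed to Kaufman, Mattila, Falconer, and Peres--Schlag, with no proof given --- so there is no ``paper's own proof'' to compare against. Evaluating your proposal on its own terms: it is the standard route through the literature, and the overall shape is right. Part (1) is Kaufman's energy argument; the layer-cake estimate $\int_E |\pi_W e|^{-\tau}\,d\nu \lesssim 1$ for $\tau < \sigma - m(n-m) + m$ is correct (the tangency locus $\{W: e\perp W\}\cong Gr(n-1,m)$ does have codimension $m$), and the window $\tau\in(t,\min\{s,\sigma-m(n-m)+m\})$ is nonempty precisely under the stated hypothesis. Your numerology for parts (2) and (3) also checks out: with $\beta:=\sigma-(m-1)(n-m)$ the hypotheses give $\beta>n-\dim K$ and $\beta>n+m-\dim K$ respectively, which is exactly what is needed to land below $\dim K$ in the Riesz exponent, and for (3) the interval $(m,\,\sigma-m(n-m)-m+\dim K)$ for the auxiliary weight $a$ is nonempty iff $\sigma>m(n-m)-(\dim K-2m)$. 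The observation about the degenerate endpoint $t=\dim K<m$ in (1) is a genuine and correct caveat.

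The one place where the outline is thinner than the actual argument is the passage, in parts (2) and (3), from the $\mathcal H^m$-surface integral $\int_W|\widehat\mu|^2\,d\mathcal H^m\xi$ to an $n$-dimensional weighted energy integral using only the Frostman bound on $\nu$. For the uniform measure on $Gr(n,m)$ this is a clean disintegration identity, but for a general Frostman measure $\nu$ one cannot simply swap $\int_W$ and $\int_E d\nu$ into $\int_{\R^n}$: the inner integral lives on a Lebesgue-null set for each $W$, and ``covering by $\delta$-balls'' on the Grassmannian does not by itself produce the weight $|\xi|^{m-n}$ without a mollification/thickening step (replacing $W$ by a slab $W^\delta$ and controlling the error from the oscillation of $\widehat\mu$ at scale $1$, or working with $\|\pi_{W\#}\mu\|_{L^2}^2$ in physical space via an approximate identity as in Peres--Schlag). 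This is exactly the bookkeeping you flag as the main obstacle, and it is where the proof genuinely lives; as written the proposal asserts the conclusion of that step rather than carrying it out. With that step filled in --- along the lines of Peres--Schlag's transversality machinery or Mattila's mollified $L^2$ identity --- the argument is complete.
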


Here $Gr(n, m)$ denotes the set of $m$-dimensional subspaces of the $n$-dimensional space, $\pi_W$ denotes the orthogonal projection onto $W$ and $Int$ denotes the interior of a set.

In particular, in the plane the first bound above yields that $$\dim\{ \theta \in S^1 \mid \dim \pi_\theta(K) < t\} \leq t.$$ It was conjectured by Oberlin \cite{Obe12} that for $\dim K \leq 1$ and $t \in (\frac{1}{2} \dim K, \dim K)$ the sharp bound for the planar case is actually $2t - \dim K$. Bourgain, in his groundbreaking paper \cite{Bou10} established the $t = \dim K/2$ case of this conjecture.

\begin{theorem}\label{BourgainProjectionTheorem}
    For any $\kappa \in (0, 1)$ and $\alpha \in (0, 2)$ there exists $\varepsilon = \varepsilon(\kappa, \alpha) > 0$ such that for any $K$ with $\dim K = \alpha$ the following holds:
    $$\dim \{\theta \in S^1 \mid \dim \pi_\theta(K) < \frac{\alpha}{2} + \varepsilon\} < \kappa.$$ 
\end{theorem}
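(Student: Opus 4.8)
The plan is to reduce Theorem~\ref{BourgainProjectionTheorem} to a single‑scale, $\delta$‑discretized projection estimate and then to power that estimate with Bourgain's discretized sum--product theorem. Fix $\kappa\in(0,1)$, $\alpha\in(0,2)$ and suppose toward a contradiction that for every $\varepsilon>0$ there is a Borel set $K$ with $\dim K=\alpha$ whose exceptional set $E_\varepsilon=\{\theta\in S^1:\dim\pi_\theta(K)<\tfrac{\alpha}{2}+\varepsilon\}$ has $\dim E_\varepsilon\ge\kappa$. By Frostman's lemma pick, for small $\eta>0$, a probability measure $\mu$ on $K$ with $\mu(B(x,r))\lesssim r^{\alpha-\eta}$ and one $\nu$ on $E_\varepsilon$ with $\nu(B(\theta,r))\lesssim r^{\kappa-\eta}$. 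For each $\theta\in E_\varepsilon$ the inequality $\dim_H\pi_\theta K<\tfrac{\alpha}{2}+\varepsilon\le\underline{\dim}_B\pi_\theta K$ forces $N_\delta(\pi_\theta K)\le\delta^{-\alpha/2-\varepsilon}$ along a sequence of dyadic scales $\delta\to0$ depending on $\theta$; a polynomial pigeonhole over dyadic scales against $\nu$, together with a standard regularization of $\mu$ and of $\nu$ at scale $\delta$, then isolates one scale $\delta$, a $\delta$‑separated $A\subseteq\operatorname{supp}\mu$ with $|A|\gtrsim\delta^{-\alpha+\eta}$ and $|A\cap B(x,r)|\lesssim r^{\alpha-2\eta}|A|$ for $\delta\le r\le1$, and a $\delta$‑separated $\Theta\subseteq S^1$ with $|\Theta|\gtrsim\delta^{-\kappa/2}$ and $|\Theta\cap B(\theta,r)|\lesssim r^{\kappa/2}|\Theta|$ for $\delta\le r\le1$, such that $|\pi_\theta(A)|_\delta\le\delta^{-\alpha/2-\varepsilon}$ for every $\theta\in\Theta$. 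It therefore suffices to prove the discretized projection theorem: for all $\sigma\in(0,2)$, $\tau\in(0,1)$ there are $\varepsilon_\ast(\sigma,\tau)>0$, $\varepsilon_0>0$, $\delta_0>0$ so that if $\delta<\delta_0$, $A$ is $\delta$‑separated in $B(0,1)\subseteq\R^2$ with $|A|\ge\delta^{-\sigma}$ and $|A\cap B(x,r)|\le r^{\varepsilon_0}|A|$ for $\delta\le r\le1$, and $\Theta\subseteq S^1$ is $\delta$‑separated with $|\Theta|\ge\delta^{-\tau}$ and $|\Theta\cap B(\theta,r)|\le r^{\varepsilon_0}|\Theta|$ for $\delta\le r\le1$, then $\max_{\theta\in\Theta}|\pi_\theta(A)|_\delta\ge\delta^{-\sigma/2-\varepsilon_\ast}$. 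Applying this with $\sigma=\alpha$, $\tau=\kappa/2$ (shrinking $\eta$ so the Frostman exponents beat $\varepsilon_0$) and choosing $\varepsilon<\varepsilon_\ast$ in the statement of the theorem yields the contradiction.

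For the discretized statement, suppose instead $|\pi_\theta(A)|_\delta<\delta^{-\sigma/2-\varepsilon_\ast}$ for all $\theta\in\Theta$. Pick $\theta_0,\theta_1\in\Theta$ with $|\theta_0-\theta_1|\gtrsim1$; after a projective change of coordinates on $S^1$ (which distorts dimensions of subsets of $\Theta$ away from $\theta_0,\theta_1$ by bounded factors only) we may take $\pi_{\theta_0}(x,y)=x$, $\pi_{\theta_1}(x,y)=y$, with the remaining directions becoming slopes $c\asymp1$ forming a non‑concentrated $\delta$‑set of size $\gtrsim\delta^{-\tau}$. Put $X=\pi_{\theta_0}(A)$, $Y=\pi_{\theta_1}(A)$: then $|X|,|Y|\in[\delta^{-\sigma/2+\varepsilon_\ast},\delta^{-\sigma/2-\varepsilon_\ast}]$, $A\subseteq X\times Y$ with relative density $\ge\delta^{2\varepsilon_\ast}$, and $\sigma/2\in(0,1)$, so $X,Y$ are genuinely fractal and inherit non‑concentration from $A$. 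For each slope $c$, $|\{x+cy:(x,y)\in A\}|<\delta^{-\sigma/2-\varepsilon_\ast}$, so the asymmetric Balog--Szemer\'{e}di--Gowers theorem gives $X_c\subseteq X$, $Y_c\subseteq Y$ of relative size $\ge\delta^{O(\varepsilon_\ast)}$ with $|X_c+cY_c|\le\delta^{-O(\varepsilon_\ast)}|X|$; pigeonholing over the $\gtrsim\delta^{-\tau}$ slopes (using non‑concentration of $X,Y$) extracts one pair $X'\subseteq X$, $Y'\subseteq Y$ of relative size $\ge\delta^{O(\varepsilon_\ast)}$ and a non‑concentrated slope set $C$, $|C|\gtrsim\delta^{-\tau/2}$, with $|X'+cY'|\le\delta^{-O(\varepsilon_\ast)}|X'|$ for every $c\in C$. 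Repeated Pl\"{u}nnecke--Ruzsa calculus together with an incidence argument — pitting the additive structure forced by these many small sumsets against the multiplicative action of $C$ — then produces a non‑concentrated $\delta$‑set $A'\subseteq\R$ with $\delta^{-\sigma/2}\lesssim|A'|\lesssim\delta^{-(1-\sigma/2)}$ and $\max(|A'+A'|,|A'\cdot A'|)\le\delta^{-O(\varepsilon_\ast)}|A'|$. For $\varepsilon_\ast$ small relative to $\sigma$ and $\tau$ this contradicts Bourgain's discretized sum--product theorem, which forces $\max(|A'+A'|,|A'\cdot A'|)\ge\delta^{-\varepsilon_1}|A'|$ with $\varepsilon_1=\varepsilon_1(\sigma)>0$.

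That the non‑concentration of $A$ is indispensable can be seen from the extremal case when $A$ is a $\delta$‑neighbourhood of a genuine Cartesian product of full intervals: there every projection has size $\approx\delta^{-\sigma/2}$ and no gain is possible — this is precisely the configuration the sum--product theorem excludes, and it forces $\varepsilon=\varepsilon(\kappa,\alpha)$ to degenerate as $\alpha\to0$ or $\alpha\to2$, consistent with the statement. The main obstacle is twofold. First, the engine itself: Bourgain's discretized sum--product theorem is a deep result, its proof resting on a multiscale decomposition of $A'$ combined with a $\delta$‑discretized Szemer\'{e}di--Trotter incidence bound. Second, inside the bridge one must perform many applications of Balog--Szemer\'{e}di--Gowers and Pl\"{u}nnecke--Ruzsa while keeping every loss of the shape $\delta^{-O(\varepsilon_\ast)}$, so that a single small $\varepsilon_\ast$ — chosen relative to $\varepsilon_1$, which itself depends on $\sigma,\tau$ and degenerates at the endpoints — simultaneously defeats all error terms; the step producing one pair $X',Y'$ valid for $\gtrsim\delta^{-\tau/2}$ slopes is the most delicate point. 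A final, lower‑order nuisance is the single‑scale reduction, which needs a pigeonhole over dyadic scales against the Frostman measures $\mu$ and $\nu$ preceded by a regularization of those measures.
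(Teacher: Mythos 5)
Theorem~\ref{BourgainProjectionTheorem} is Bourgain's projection theorem, quoted from \cite{Bou10} as background; the paper does not prove it, so there is no in-paper argument to compare against. Your sketch does follow the broad architecture of Bourgain's actual proof: reduce the continuous statement to a $\delta$-discretized, single-scale projection estimate for non-concentrated sets, then, assuming all projections in a non-concentrated direction set are small, manufacture (via Balog--Szemer\'edi--Gowers and Pl\"unnecke--Ruzsa manipulations combined with an incidence argument) a non-concentrated set $A'$ with simultaneously small sumset and small product set, contradicting the discretized sum--product theorem. That is the right skeleton and you correctly identify where the genuine difficulty lies.

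Two remarks on what you wrote. First, the continuous-to-discrete reduction contains a real error as stated. You write that $\dim_H \pi_\theta K < \tfrac{\alpha}{2}+\varepsilon$ ``forces $N_\delta(\pi_\theta K)\le\delta^{-\alpha/2-\varepsilon}$ along a sequence of dyadic scales'', which is the assertion $\underline{\dim}_B \pi_\theta K \le \tfrac{\alpha}{2}+\varepsilon$. But the inequality $\dim_H \le \underline{\dim}_B$ goes the wrong way for this deduction: small Hausdorff dimension does \emph{not} bound lower box dimension (e.g.\ $\{1/n\}\cup\{0\}$ has $\dim_H=0$ and $\underline{\dim}_B=\tfrac12$). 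The standard fix is to work at the level of the push-forward measure $\pi_\theta\mu$: from a Hausdorff covering $\{I_j\}$ of $\pi_\theta K$ with $\sum r_j^{\alpha/2+\varepsilon}<1$, dyadically pigeonhole in the radii to find a scale $\delta_\theta$ at which a definite $\pi_\theta\mu$-proportion of $\pi_\theta K$ lies in $\le \delta_\theta^{-\alpha/2-\varepsilon}$ balls of radius $\delta_\theta$; one then runs a second pigeonhole over $\theta$ against $\nu$ to extract a single common $\delta$. This is what the ``regularization plus pigeonhole'' step must actually say. Second, and more fundamentally, the proposal treats the two engines --- the bridge producing $A'$ with $\max(|A'+A'|,|A'\cdot A'|)\le\delta^{-O(\varepsilon_\ast)}|A'|$, and the discretized sum--product theorem it feeds into --- as black boxes; for a result being \emph{cited} this is fine, but as a self-contained proof there is nothing behind those steps, and they are precisely where all the content of Bourgain's theorem resides. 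Also minor: your $\tau=\kappa/2$ and $|\Theta|\gtrsim\delta^{-\kappa/2}$ lose a factor of two for no reason; a careful pigeonhole retains $|\Theta|\gtrsim\delta^{-\kappa+O(\eta)}$.
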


Recently, the conjecture was proven, first for almost Ahlfors-David regular sets by Orponen and Shmerkin \cite{Orp23}, and then for all Borel sets by Ren and Wang \cite{Ren23}.

In higher dimensions, He \cite{He18} obtained analogs of Bourgain's projection theorem for all codimensions.

\begin{theorem}\label{higherBourgainProjection}
    For any $\kappa \in (0, 1)$ and $\alpha \in (0, n)$ there exists $\varepsilon = \varepsilon(\kappa, \alpha) > 0$ such that for any analytic set $K \subseteq \R^n$ with $\dim K = \alpha$ the following holds:
    the set $\{W \in Gr(n, m) \mid \dim \pi_W(K) < \frac{m}{n}\alpha + \varepsilon\}$ does not support any non-zero measure $\mu$ on $Gr(n, m)$ with the following non-concentration property:
    $$\forall \rho > 0, \forall V \in Gr(n, n - m):  \mu(\{W \mid \angle(W, V) \leq \rho \}) \leq \rho^\kappa.$$
\end{theorem}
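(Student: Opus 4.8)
\emph{Reduction to a discretized statement.} The plan is to argue by contradiction: assume that for some $\kappa,\alpha$ and every $\varepsilon>0$ there exist an analytic $K$ with $\dim K=\alpha$ and a nonzero measure $\mu$ on $\mathcal B_\varepsilon:=\{W\in Gr(n,m):\dim\pi_W(K)<\tfrac mn\alpha+\varepsilon\}$ obeying the $\kappa$-non-concentration bound. First I would take a Frostman measure $\nu$ on $K$ with $\nu(B(x,r))\lesssim r^{\alpha}$ (absorbing the usual Frostman loss into $\alpha$). For each $W\in\mathcal B_\varepsilon$ the pushforward $\pi_{W\#}\nu$ has lower dimension $<\tfrac mn\alpha+\varepsilon$, so along a sequence of dyadic scales $\delta\to0$ a $\nu$-positive portion of $K$ projects under $\pi_W$ into at most $\delta^{-(\frac mn\alpha+2\varepsilon)}$ cubes of side $\delta$. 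Pigeonholing one such scale $\delta$ for a $\mu$-positive set of subspaces, discretizing that set at a fixed scale $\rho$ into a $\rho$-separated family $\mathcal W$ that inherits the non-concentration hypothesis at scale $\rho$, and discretizing $\nu$ into a $(\delta,\alpha)$-set $A\subseteq[0,1]^n$ with $\#(A\cap B(x,r))\lesssim r^\alpha\#A$ for $\delta\le r\le1$, the goal becomes to refute, for every small $\varepsilon$: \emph{there are arbitrarily small $\delta\le\rho$, a $(\delta,\alpha)$-set $A$, and a $\rho$-discretized non-concentrated family $\mathcal W\subseteq Gr(n,m)$ with $N_\delta(\pi_W(A))\le\delta^{-(\frac mn\alpha+3\varepsilon)}$ for all $W\in\mathcal W$}, where $N_\delta$ denotes $\delta$-covering number. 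Since $\tfrac mn\alpha<\min(m,\alpha)$ (using $m<n$ and $\alpha<n$), this bound lies strictly below the Marstrand-type generic value $\delta^{-\min(m,\alpha)}$ once $\varepsilon$ is small, so the statement is not vacuous.

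\emph{Energy reformulation.} For a single $W$ I would cover $W\cong\R^m$ by the $\le\delta^{-(\frac mn\alpha+3\varepsilon)}$ relevant $\delta$-cubes and apply Cauchy--Schwarz, converting the covering bound into the collision bound
\[
\#\{(x,y)\in A\times A:\ |\pi_W(x-y)|\lesssim\delta\}\ \gtrsim\ \delta^{\frac mn\alpha+3\varepsilon}\,(\#A)^2 ,
\]
that is, $A-A$ concentrates in the $\delta$-neighbourhood of the $(n-m)$-plane $W^\perp$ strictly more than generically, with a quantitative gain $\delta^{-c}$, $c=\tfrac mn(n-\alpha)-3\varepsilon>0$. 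Running this over all $W\in\mathcal W$ and observing that the hypothesis on $\mu$ is exactly the statement that $\{W^\perp:W\in\mathcal W\}$ is a non-concentrated family in $Gr(n,n-m)$ (non-concentration near the lower-dimensional incidence strata), one obtains: $A-A$ exhibits above-generic concentration simultaneously along a $\kappa$-dimensional family of $(n-m)$-planes, with a uniform gain. The task is then to show this is impossible as soon as $c$ is a definite positive constant, which forces $c\lesssim\varepsilon$ and hence the desired contradiction.

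\emph{The engine, and the main obstacle.} This is the crux, and where Bourgain's planar theorem (Theorem~\ref{BourgainProjectionTheorem}) enters as the seed of an induction. I would run a multi-scale/bootstrapping analysis in the spirit of Bourgain's proof of the planar case: localize $A$ and the family $\{W^\perp\}$ to pairs of nearby scales $\delta\le\delta'\le\sqrt\delta\le\cdots$; on each window the above-generic concentration of $A-A$ along many $(n-m)$-planes turns, after an additive-combinatorics step, into nontrivial growth or structure for $A$ under the $GL_n(\R)$ (equivalently $O(n)$) motions carrying one window's planes to another's; feed this into a discretized sum--product estimate (in $\R$, or its matrix/product-in-$SO(n)$ analogue, the non-commutative replacement for the scalar sum--product used in the planar proof) to upgrade structure at one scale to structure at the next; and iterate through all $\sim\log(1/\delta)$ scales, forcing either $A$ to violate its $(\delta,\alpha)$-non-concentration or the family $\{W^\perp\}$ to cluster near a single stratum, contradicting its non-concentration. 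I expect this to be by far the hardest step, for three reasons. First, one cannot simplify by reducing the ambient dimension or the target dimension: a non-concentrated family in $Gr(n,m)$ need not meet any sub-Grassmannian $Gr(k,m)$ or $Gr(n,k)$ in a non-negligible set, so naive slicing/restriction arguments fail and the estimate must be run intrinsically on $Gr(n,m)$, while the thresholds $\tfrac mn\alpha$ do not factor through lower-dimensional projections. Second, the planar input is a single-scale statement, so propagating its gain through all scales with constants uniform over $\mathcal W$ requires the full multi-scale bookkeeping of Bourgain's method, now in a non-commutative setting. Third, the sum--product constants are ineffective, so $\varepsilon(\kappa,\alpha)$ is ineffective and the chain ``$\varepsilon\mapsto$ gain $\mapsto$ new $\varepsilon$'' must be tracked carefully.

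\emph{Conclusion.} Once the engine yields an absolute lower bound $c_0=c_0(\kappa,\alpha)>0$ on the gain whenever the covering hypothesis holds with $\varepsilon<\varepsilon_0$, choosing $\varepsilon<\min(\varepsilon_0,c_0/4)$ makes $c=\tfrac mn(n-\alpha)-3\varepsilon\ge c_0$ incompatible with the contradiction just derived, so no such discretized configuration exists at that $\varepsilon$. Unwinding the discretization (the obstruction was produced along a subsequence of scales for the Frostman measure $\nu$ on $K$) shows the measure $\mu$ could not have existed, which is exactly the assertion of the theorem for $\varepsilon=\varepsilon(\kappa,\alpha)$.
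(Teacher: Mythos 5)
This statement is cited from He \cite{He18}; the paper does not prove it, so there is no internal proof to compare against. I'll therefore assess your sketch against what He actually does.

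Your outline of the discretization step (Frostman measure, dyadic pigeonholing, $(\delta,\alpha)$-set, $\rho$-separated non-concentrated family $\mathcal W$) and the Cauchy--Schwarz passage from a covering bound to a collision bound is the correct and standard preamble, and you are right that the hard content is the ``engine'' converting above-generic concentration of $A-A$ along many $(n-m)$-planes into a contradiction. But the structural claim you make in the ``main obstacle'' paragraph --- that ``one cannot simplify by reducing the ambient dimension or the target dimension'' and so ``the estimate must be run intrinsically on $Gr(n,m)$'' --- is in fact the opposite of He's actual strategy, and indeed of the strategy of the present paper. He does \emph{not} run an intrinsic multi-scale sum--product argument on $Gr(n,m)$. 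Instead he (i) proves a base case in low dimension/codimension (essentially the $m=1$ case, driven by Bourgain's discretized planar projection/sum--product theorem applied to two-dimensional slices), and then (ii) bootstraps to all $(n,m)$ by two combinatorial reductions that \emph{add bad dimensions} and \emph{add bad codimensions}: if the projection bound fails for $m_1$ and for $m_2$ with $m_1+m_2<n$ one produces a counterexample for $m_1+m_2$ by taking $W_1+W_2$ with $W_1\cap W_2=0$ (Propositions 10 and 34 in \cite{He18}); dually for $m_1+m_2>n$ one intersects. These are precisely what the paper you are reading ports to $\F_p$ as Propositions \ref{additionofDimensionsProposition}, \ref{additionofCodimensionsProposition}, \ref{SumofSubspacesProposition}, \ref{CapofSubspacesProposition} and \ref{sumBound}. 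Your observation that a non-concentrated family in $Gr(n,m)$ need not restrict nicely to a fixed sub-Grassmannian is correct, but that only rules out the naive slicing you contemplate; it does not rule out the sum/intersection construction, whose cardinality is controlled (with a polynomial loss, absorbed into $\varepsilon$) by the non-degeneracy/non-concentration hypothesis, and whose projection is controlled through elementary inequalities like Proposition \ref{intersectionBound} and the $L^2$ slicing estimate underlying Proposition \ref{sumBound}.

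So the gap is twofold. First, as you concede, the ``engine'' is left entirely unspecified; as written the proposal is a roadmap, not a proof, and it is aimed at a non-commutative multi-scale argument that would be far harder than what is actually needed. Second, and more substantively, it dismisses the reduction-to-base-case route that is the real mechanism of He's theorem (and which makes the higher-dimensional result follow with comparatively little extra work once Bourgain's planar input is in place). A minor technical point: your gain exponent $c=\tfrac mn(n-\alpha)-3\varepsilon$ implicitly assumes $\alpha\ge m$; for $\alpha<m$ the generic covering number is $\delta^{-\alpha}$ and the exponent becomes $\tfrac{n-m}{n}\alpha-3\varepsilon$, so the bookkeeping must split into cases.
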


\subsection{Over finite fields}
In this paper we are concerned with finite-field analogs of the abovementioned results. We follow the notation of Chen \cite{Che17} and define projections in terms of the kernel.

\begin{definition}
    Let $\F$ be a field. For a linear subspace $W \subseteq \F^n$ we define the projection map by $\pi^W(x) := x + W \in \F^n/W$. 
\end{definition}
In the case of real numbers, $\pi^{W}$ coincides with $\pi_{W^\bot}$ considered above. Chen in \cite{Che17} initiated the study of exceptional sets for projections over finite fields. He obtained the following bounds:
\begin{theorem}[Corollary 1.3 in \cite{Che17}] \label{ChenTheorem}
Let $K \subseteq \F_p^n$ with $|K| = p^s$. Then 
\begin{itemize}
    \item If $s \leq m$ and $t \in (0, s]$, then
    $$|\{W \in Gr(n, n - m) \mid |\pi^{W}(K)| \leq p^t/10\}| \leq \frac{1}{2}p^{m(n - m) - (m - t)}$$
    \item If $s > m$, then
    $$|\{W \in Gr(n, n - m) \mid |\pi^{W}(K)| \leq p^m/10\}| \leq \frac{1}{2}p^{m(n - m) - (s - m)}$$
    \item If $s > 2m$, then
    $$|\{W \in Gr(n, n - m) \mid |\pi^{W}(K)| \neq p^m\}| \leq 4p^{m(n - m) - (s - 2m)}$$
\end{itemize}
\end{theorem}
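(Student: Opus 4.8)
The plan is to deduce all three estimates from one first--moment computation, applied not to $|\pi^W(K)|$ itself but to an energy that must be ``centered'' correctly. For $W\in Gr(n,n-m)$ put $K_z:=\{x\in K:\pi^W(x)=z\}$ for $z\in\F_p^n/W$ and
$$E(W):=\#\{(x,y)\in K\times K:x-y\in W\}=\sum_{z}|K_z|^2 .$$
Since $|K|=\sum_z|K_z|$ and $\pi^W(K)$ labels the nonempty fibres, Cauchy--Schwarz gives $|K|^2\le E(W)\,|\pi^W(K)|$, so a ``bad'' $W$ (one whose projection is small) is forced to have $E(W)$ large: $E(W)\ge 10|K|^2p^{-t}$ in the first bullet, $E(W)\ge 10|K|^2p^{-m}$ in the second, and $E(W)\ge |K|^2/(p^m-1)$ in the third. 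I also record the two pointwise lower bounds valid for every $W$: $E(W)\ge|K|=p^s$ (diagonal) and $E(W)\ge|K|^2p^{-m}=p^{2s-m}$ (Cauchy--Schwarz with $|\pi^W(K)|\le p^m$).

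Next I would compute the first moment. Splitting pairs $(x,y)$ according to whether $x=y$ and using $\#\{W\in Gr(n,n-m):v\in W\}=|Gr(n-1,n-m-1)|$ for $v\neq0$,
$$\sum_{W\in Gr(n,n-m)}E(W)=|K|\,|Gr(n,n-m)|+(|K|^2-|K|)\,|Gr(n-1,n-m-1)| .$$
The only arithmetic inputs needed are $\dfrac{|Gr(n-1,n-m-1)|}{|Gr(n,n-m)|}=\dfrac{p^{n-m}-1}{p^n-1}<p^{-m}$ and $|Gr(n,n-m)|=\binom{n}{m}_p<p^{m(n-m)}\prod_{j\ge1}(1-p^{-j})^{-1}<4\,p^{m(n-m)}$.

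The key step — the one I expect to be the only real obstacle — is that Markov's inequality applied to $E(W)$ directly is useless: its average is of order $\max(p^s,p^{2s-m})$, which is exactly the factor one is trying to save, and a direct Markov only yields the weaker exponents $s-t$ and $1$. Instead one must subtract from $E(W)$ the correct \emph{pointwise} lower bound, so that the dangerous main term of $\sum_W E(W)$ cancels and the residual average is small. For $s\le m$ take $\widetilde E(W):=E(W)-|K|\ge 0$; then $\sum_W\widetilde E(W)=(|K|^2-|K|)\,|Gr(n-1,n-m-1)|<|K|^2p^{-m}\,|Gr(n,n-m)|$, a bad $W$ in the first bullet satisfies $\widetilde E(W)\ge 10p^{2s-t}-p^s\ge 9p^{2s-t}$ (using $t\le s$), and Markov gives $\#\{\text{bad }W\}<\tfrac{p^{2s-m}|Gr(n,n-m)|}{9p^{2s-t}}<\tfrac49 p^{m(n-m)-(m-t)}$. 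For $s>m$ take instead $\widehat E(W):=E(W)-|K|^2p^{-m}\ge 0$; the inequality $\tfrac{p^{n-m}-1}{p^n-1}<p^{-m}$ forces $\sum_W\widehat E(W)<|K|\,|Gr(n,n-m)|$, and then a bad $W$ in the second bullet has $\widehat E(W)\ge 9|K|^2p^{-m}$, giving $\#\{\text{bad}\}<\tfrac{p^m|Gr(n,n-m)|}{9|K|}<\tfrac49 p^{m(n-m)-(s-m)}$, while a bad $W$ in the third bullet has $\widehat E(W)\ge |K|^2/(p^m(p^m-1))$, giving $\#\{\text{bad}\}<\tfrac{p^m(p^m-1)\,|Gr(n,n-m)|}{|K|}<4\,p^{m(n-m)-(s-2m)}$. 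Since $\tfrac49<\tfrac12$ and $\prod_{j\ge1}(1-p^{-j})^{-1}<4$, these beat the stated bounds: the ``$/10$'' in the hypotheses is precisely the slack that makes the numerics close comfortably, and the constants $\tfrac12,\tfrac12,4$ trace back to the two arithmetic inequalities above. After the centering, the rest is routine bookkeeping with Gaussian binomials; the alternative route through second moments is available but much messier, because it brings in uncontrolled ``parallel difference'' terms, so I would not pursue it.
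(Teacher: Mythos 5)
Your argument is correct, and it is essentially the standard energy (additive $L^2$) method that Chen himself uses to prove this result; the present paper only quotes the statement from \cite{Che17} without reproducing a proof, so there is no in-paper argument to compare against. The three ingredients you deploy are exactly the right ones: the Cauchy--Schwarz lower bound $E(W)\ge |K|^2/|\pi^W(K)|$, the exact first-moment identity
\[
\sum_{W\in Gr(n,n-m)}E(W)=|K|\,|Gr(n,n-m)|+(|K|^2-|K|)\,|Gr(n-1,n-m-1)|,
\]
and the ratio $|Gr(n-1,n-m-1)|/|Gr(n,n-m)|=(p^{n-m}-1)/(p^n-1)<p^{-m}$. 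I checked the arithmetic: in bullet one, $t\le s$ gives $10p^{2s-t}-p^s\ge 9p^{2s-t}$ and Markov yields $\tfrac49 p^{m(n-m)-(m-t)}$; in bullet two, $\hat E(W)\ge 9|K|^2p^{-m}$ and $\sum_W\hat E(W)<|K|\,|Gr(n,n-m)|$ give $\tfrac49 p^{m(n-m)-(s-m)}$; in bullet three, $|\pi^W(K)|\le p^m-1$ forces $\hat E(W)\ge |K|^2/(p^m(p^m-1))$ and the count comes out $<4p^{m(n-m)-(s-2m)}$. The bound $|Gr(n,n-m)|<4p^{m(n-m)}$ (since $\prod_{j\ge1}(1-2^{-j})^{-1}<4$) closes the constants. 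Your observation that one must subtract a pointwise lower bound ($|K|$ when $s\le m$, $|K|^2p^{-m}$ when $s>m$) before applying Markov is the genuinely important point of the proof; without that centering the first-moment bound saves nothing over the trivial estimate, and your choice of which quantity to subtract in which regime is exactly right.
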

Here and throughout the paper $\F_p$ denotes the field with $p$ elements and $p$ is assumed to be a prime number. Chen also conjectured an analog of the sharp projection theorem in $\mathbb{R}^2$. We slightly reformulate his conjecture below:

\begin{conjecture}\label{planarConjecture}
    Let $K \subseteq \F_p^2$ and $E \subseteq Gr(2, 1)$. Assume that  $0 \leq |K| \leq p$ and $2 \leq |E| \leq |K|$. Then there exists $W \in E$ with $$|\pi^W(K)| \gtrsim |K|^\frac{1}{2}|E|^\frac{1}{2}.$$
\end{conjecture}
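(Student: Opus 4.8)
The plan is to argue by contradiction and reduce to a sum--product estimate over $\F_p$. Suppose that $N_\lambda:=|\pi^{W_\lambda}(K)|\le c\,|K|^{1/2}|E|^{1/2}$ for every slope $\lambda$ in $E$ (identifying $Gr(2,1)$ with a set of slopes in $\F_p\cup\{\infty\}$), where $c=c(\delta)>0$ is small and to be chosen. By Cauchy--Schwarz, the smallness of $N_\lambda$ means that for each $\lambda\in E$ the number of ordered pairs $(k,k')\in K^2$ with $k\ne k'$ and $k-k'$ parallel to the line of slope $\lambda$ is $\ge|K|^2/N_\lambda-|K|$. Since every such pair has exactly one slope, summing over $\lambda\in E$ gives $\sum_{\lambda\in E}N_\lambda^{-1}\le 1+|E|/|K|\le 2$; this already forces $\max_{\lambda\in E}N_\lambda\gtrsim|K|$ when $|E|$ is comparable to $|K|$, while the product bound $|K|\le\prod_i N_{\lambda_i}$ after linearly diagonalising $O(1)$ of the directions disposes of bounded $|E|$. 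So the substantive case is $1\ll|E|\ll|K|$, where the hypothesis says that the ``difference--direction map'' $(k,k')\mapsto\mathrm{slope}(k-k')$ on $K\times K$ is very heavily concentrated on the small set $E$.

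The core of the argument is to upgrade this concentration to genuine multiplicative structure. First I would pass, by dyadic pigeonholing and a popularity argument (at the cost of logarithmic factors), to refined sets $K'\subseteq K$ and $E'\subseteq E$ on which the incidence pattern is regular: for every $\lambda\in E'$, the set $K'$ is essentially covered by $\approx M$ parallel lines of slope $\lambda$, each meeting $K'$ in $\approx|K'|/M$ points, with $M\le c^{O(1)}|K|^{1/2}|E|^{1/2}$. Then, applying a Balog--Szemer\'edi--Gowers type argument to the popular collision pairs together with a Ruzsa covering argument, I would aim to show that a positive proportion of $K'$ lies in an affine image of a set with product structure --- a Cartesian product $A\times B$ (or a graph of multiplication) with $|A|,|B|\lesssim c^{O(1)}|K|^{1/2}$ --- in which each projection $\pi^{W_\lambda}$, $\lambda\in E'$, becomes a dilated sumset such as $B-\lambda A$ and in which $E'$ is comparable to a ratio set built from $A$ and $B$. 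Finally, invoke the sum--product theorem over $\F_p$ (Bourgain--Katz--Tao, and its quantitative refinements due to Rudnev and Rudnev--Shkredov): since $|A|,|B|,|E|\le|K|\le p^{1-\delta}$, the sets $A+A$, $A\cdot A$ and $E'$ cannot all be of size $\lesssim|A|^{1+o(1)}$, which contradicts the bounds extracted above once $c$ is chosen small enough depending on $\delta$.

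The hard part will be that middle step: manufacturing product-set structure out of nothing more than the smallness of the projections, and doing so without losing the exponent $1/2$. This is precisely the finite-field incarnation of the delicate ``two-ends/non-concentration'' mechanism at the heart of Bourgain's projection theorem, and off-the-shelf incidence bounds are not sharp enough for it: inserting Vinh's bound $I(P,L)\lesssim|P|^{3/4}|L|^{3/4}+|P|+|L|$, or the Stevens--de Zeeuw bound, directly into the double counting above only yields $\max_{W\in E}|\pi^W(K)|\gtrsim(|K||E|)^{\theta}$ for some $\theta<1/2$, because those estimates do not exploit that the line family in question has only $|E|\le|K|$ distinct directions. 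Recovering the full exponent --- rather than $1/2-\eta$, or the statement only under an extra hypothesis such as $|E|\ge|K|^{\kappa}$ --- appears to require either a multi-scale bootstrapping argument tracking how $K$ branches across directions or the extraction of an approximate subring, and I expect this to be the genuine obstacle, the same gap that separates Bourgain's theorem from the sharp Ren--Wang theorem over $\R$.
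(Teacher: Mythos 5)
This statement is labeled a \emph{Conjecture} in the paper, and the paper neither proves it nor claims to: it is an open problem due to Chen, here only partially addressed. Theorem~\ref{planarProjections} (Lund--Pham--Vinh) obtains the strictly weaker exponents $|K|^{2/5}|E|^{2/5}$ and $|K|^{1/2}|E|^{1/6}$ via the Stevens--de~Zeeuw incidence bound; Theorem~\ref{lineProjections} is the paper's higher-dimensional analogue with a further degraded power of $|E|$; and Theorem~\ref{smallSetProjections} reaches the sharp $|K|^{1/2}|E|^{1/2}$ only for $|K|,|E|\lesssim \log\log\log p$, a regime in which Grosu's embedding of small subsets of $\F_p$ into $\C$ lets one invoke Szemer\'edi--Trotter over $\C$ rather than any finite-field estimate. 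So there is no ``paper's own proof'' of Conjecture~\ref{planarConjecture} to compare against.

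Your write-up is an honest research plan rather than a proof, and you say as much. The opening reductions are correct: Cauchy--Schwarz plus the fact that each ordered pair of distinct points in $K$ determines a unique slope gives $\sum_{\lambda\in E}N_\lambda^{-1}\le 1+|E|/|K|$, which disposes of $|E|\gtrsim|K|$, and the trivial bound $|K|\le N_{\lambda_1}N_{\lambda_2}$ for two transverse directions disposes of bounded $|E|$. But the step carrying the entire weight of the argument --- using Balog--Szemer\'edi--Gowers together with Ruzsa covering to convert the concentration of $(k,k')\mapsto\mathrm{slope}(k-k')$ on the small set $E$ into an approximate Cartesian product $A\times B$ with $|A|,|B|\lesssim|K|^{1/2}$, and then contradicting a finite-field sum--product theorem --- is stated as ``I would aim to show,'' and you yourself flag it as the genuine obstruction. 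That assessment is accurate: this is precisely the gap the conjecture asks one to close. The available discretised-sum-product machinery (Bourgain's original argument and its $\F_p$ counterparts via Bourgain--Katz--Tao and Rudnev--Shkredov) produces bounds of the shape $|K|^{1/2}|E|^{\eta}$ for some small $\eta=\eta(\delta)>0$, which is exactly the kind of statement the paper's Theorem~\ref{BourgainTypeProjections} proves; it does not yield the exponent $1/2$ on $|E|$. Over $\R$ the passage from Bourgain's $\varepsilon>0$ gain to the sharp Ren--Wang exponent required genuinely new ideas, and no finite-field analogue of that upgrade is known. Naming the right toolbox while leaving unproved the one step that distinguishes a small $\varepsilon$ from the exponent $1/2$ does not constitute a proof of the conjecture.
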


Bright and Gan in \cite{Bri23} showed that this lower bound can be attained. They also improved the bound on the size of the exceptional set when $|K|$ is large (for arbitrary $n$). Fraser and Rakhmonov in their paper \cite{Fra25} studied the projections of sets $K$ having certain Fourier analytic properties and in some cases obtained improved bounds on the size of the exceptional set. Lund, Pham and Vinh in a recent paper \cite{Lun23} proved the following bound on the size of the projections in the plane, improving the results of Chen.

\begin{theorem}\label{planarProjections}
    Let $K \subseteq \F_p^2$ and $E \subseteq Gr(2, 1)$. Assume that $|K|^\frac{1}{2} \leq |E| \leq |K| \leq p$. Then there exists $W \in E$ such that $$|\pi^W(K)| \gtrsim \max\{|K|^{\frac{1}{2}}|E|^{\frac{1}{6}}, |K|^\frac{2}{5}|E|^\frac{2}{5}, |E|\}.$$ 
\end{theorem}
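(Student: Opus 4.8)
\emph{Setup and reduction.} The plan is to recast the statement as a problem about collinear pairs of points of $K$ on lines with few directions, and to attack that with incidence geometry. Identify $Gr(2,1)$ minus the vertical direction with the set of slopes $\F_p$: a subspace $W$ of slope $d$ has $\pi^W(K)$ in bijection with $S_d:=\{\,y-dx:(x,y)\in K\,\}$. (If $E$ contains the vertical direction that is one extra slope whose contribution does not affect the exponents; below $D\subseteq\F_p$ denotes the set of slopes of the subspaces in $E$, so $|D|=|E|$.) For each $d$, Cauchy--Schwarz gives $|S_d|\ge|K|^2/E_d$ where
$$E_d:=\#\{(p,q)\in K^2:\ p-q\in\langle(1,d)\rangle\}.$$
Splitting ordered pairs of points of $K$ according to whether they coincide, have vertical difference, or have a well-defined slope, one finds $\sum_{d\in D}E_d=|K|\,|E|+Q$, where
$$Q:=\#\{(p,q)\in K^2:\ p\ne q,\ (p-q)_x\ne 0,\ (p-q)_y/(p-q)_x\in D\}$$
counts ordered pairs of distinct points of $K$ that are collinear in one of the $|E|$ directions. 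Choosing $d^{*}\in D$ with $E_{d^{*}}$ minimal and using $E_{d^{*}}\le\frac1{|E|}\sum_{d\in D}E_d$,
$$\max_{W\in E}|\pi^W(K)|\ \ge\ \frac{|K|^2}{E_{d^{*}}}\ \ge\ \frac{|K|^2\,|E|}{|K|\,|E|+Q}\ \ge\ \frac12\min\!\Big(|K|,\ \frac{|K|^2\,|E|}{Q}\Big).$$
Hence the theorem reduces to three upper bounds on $Q$: the trivial $Q\le|K|^2$ (which yields the term $|E|$, using $|E|\le|K|$), the bound $Q\lesssim|K|^{8/5}|E|^{3/5}$ (which yields $|K|^{2/5}|E|^{2/5}$), and $Q\lesssim|K|^{3/2}|E|^{5/6}$ (which yields $|K|^{1/2}|E|^{1/6}$); in each case $|K|^2|E|/Q\le|K|$ in the relevant range because $|E|\le|K|$. (In fact $|K|^{1/2}|E|^{1/6}\le|K|^{2/5}|E|^{2/5}$ throughout the range $|E|\ge|K|^{1/2}$, so the real content is in the first two bounds.)

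\emph{Bounding $Q$.} Write $Q=\sum_{\ell\in\mathcal{L}_D}|K\cap\ell|^2-|K|\,|E|$, where $\mathcal{L}_D$ is the set of affine lines whose direction lies in $E$, and decompose $\mathcal{L}_D$ dyadically into the sets $\mathcal{L}_j$ of lines with $|K\cap\ell|\sim 2^j$, so $Q\lesssim\sum_j 2^{2j}|\mathcal{L}_j|$. Two estimates control $|\mathcal{L}_j|$. First, lines of a fixed slope are pairwise disjoint, so at most $|K|/2^j$ of them can lie in $\mathcal{L}_j$, giving $|\mathcal{L}_j|\le|E|\,|K|\,2^{-j}$. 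Second, and crucially, the number of $2^j$-rich lines among those of direction in $E$ is controlled by a Szemer\'edi--Trotter-type incidence bound over $\F_p$, and here the few-directions structure should be exploited. Fixing two slopes $d_0,d_1\in D$, the map $(x,y)\mapsto(y-d_0x,\,y-d_1x)$ embeds $K$ injectively into the Cartesian product $S_{d_0}\times S_{d_1}$, whose two sides have size at most the quantity $N:=\max_{W\in E}|\pi^W(K)|$ that we are bounding; under this identification every other direction becomes a linear form, and $K$ becomes a subset of an $N\times N$ grid that is not much larger than $K$ when $N$ is small. The Cartesian-product incidence theorem of Stevens--de Zeeuw, or Rudnev's point--plane incidence bound applied to the corresponding three-dimensional configuration --- both much stronger than the generic point--line bound for point sets inside small product sets --- then bounds $|\mathcal{L}_j|$, and optimising the dyadic sum against this and the disjointness estimate should produce the exponents $(8/5,3/5)$ and $(3/2,5/6)$.

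\emph{Where the difficulty lies.} The crux is this last step, and in particular the necessity of the \emph{refined} (Cartesian-product or point--plane) incidence input: a direct application of the generic Stevens--de Zeeuw point--line bound to $K$ and $\mathcal{L}_D$, together with $\sum_{d\in D}|S_d|\le|E|\,N$, yields only the weaker estimate $N\gtrsim|K|^{4/11}|E|^{4/11}$, short of the target $|K|^{2/5}|E|^{2/5}$; it is the extra product structure coming from the two fixed projections that must bridge the gap. Executing this needs careful bookkeeping of the dyadic scales --- the delicate one being the top scale, where a line may contain as many as $N$ points of $K$; a clean way to handle it is to run the whole argument by contradiction, assuming $N$ is at most the claimed bound, which forces every line of direction in $E$ to meet $K$ in at most $N$ points --- and, since we are in positive characteristic, one must also verify that all the parameters stay below the thresholds (of order $p^2$) within which the incidence theorem is valid, which is precisely where the hypotheses $|K|\le p$ and $|E|\le|K|$ enter.
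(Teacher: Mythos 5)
The paper does not prove Theorem \ref{planarProjections}; it is quoted from Lund--Pham--Vinh \cite{Lun23}, so there is no in-paper argument to compare against.

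Your reduction is correct and cleanly organized: the Cauchy--Schwarz passage to the directional energies $E_d$, the identity $\sum_{d\in D}E_d=|K||E|+Q$, and the consequent bound $N\gtrsim\min(|K|,\,|K|^2|E|/Q)$ are all right, as is the bookkeeping showing which upper bounds on $Q$ would recover each term of the maximum, and the observation that $|K|^{1/2}|E|^{1/6}\le|K|^{2/5}|E|^{2/5}$ on the stated range. But the write-up stops precisely where the content of the theorem lies: the estimates $Q\lesssim|K|^{8/5}|E|^{3/5}$ and $Q\lesssim|K|^{3/2}|E|^{5/6}$ are never established, and the route you sketch does not actually reach them. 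Carrying out the plan you describe --- place $K$ in the grid $S_{d_0}\times S_{d_1}$ of side at most $N$, dyadically decompose $\mathcal{L}_D$ into classes $\mathcal{L}_j$ by richness, and control $|\mathcal{L}_j|$ via the Stevens--de Zeeuw Cartesian estimate $I(A\times B,\mathcal{L})\lesssim N^{5/4}|\mathcal{L}|^{3/4}+|\mathcal{L}|$ together with the disjointness bound $|\mathcal{L}_j|\le|E||K|2^{-j}$ --- gives $Q\lesssim(|E||K|)^{2/3}N^{5/3}$ and hence $N\gtrsim|K|^{1/2}|E|^{1/8}$, which is strictly weaker than the target $|K|^{2/5}|E|^{2/5}$ throughout the range $|E|\ge|K|^{1/2}$ (and for $|E|$ near $|K|$ is even weaker than the generic $|K|^{4/11}|E|^{4/11}$ you already rejected as insufficient). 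So the specific incidence input you propose does not close the gap; some genuinely further idea --- an argument exploiting that $\mathcal{L}_D$ has only $|E|$ distinct slopes, or a concrete application of Rudnev's point--plane bound that you have not worked out --- is required. As it stands this is a sound reduction plus a research plan, not a proof.
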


\subsection{Main results}
The first result of this paper proves an analog of the bound in Theorem \ref{planarProjections} for one-dimensional projections in higher dimensions. We obtain the following Bourgain-type estimates:
\begin{theorem}\label{lineProjections}
    Consider $n \geq 2$. Let $K \subseteq \F_p^n$ and let $E \subseteq Gr(n, n - 1)$ be a set of hyperplanes that do not contain a common line (i.e. $\bigcap_{W \in E} W = 0$). Assume that $|K| |E|^\frac{2n + 1}{4(n - 1)} \leq p^n$. Then there exists $W \in E$ such that $$|\pi^W(K)| \gtrsim \min \{|K|^\frac{1}{n}|E|^\frac{1}{4n(n - 1)}, |K|^\frac{1}{n - 1}\}.$$
\end{theorem}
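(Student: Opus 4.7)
The natural approach is an induction on $n$, with Theorem~\ref{planarProjections} providing the base case $n=2$ (LPV gives $|\pi^W(K)| \gtrsim |K|^{1/2}|E|^{1/6}$ in the range $|K|^{1/2} \leq |E| \leq |K| \leq p$, stronger than the claimed $|K|^{1/2}|E|^{1/8}$; outside this range the elementary two-hyperplane product bound $N \geq |K|^{1/2}$ together with the trivial $N \geq 1$ should cover things). Throughout, I suppose for contradiction that $|\pi^W(K)| \leq N$ for every $W \in E$, with $N$ a small constant multiple of the claimed lower bound.

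The basic mechanism is a \emph{product bound} combined with \emph{fiber slicing}. For any $W_1,\dots,W_k \in E$, the natural map $\F_p^n/\bigcap_i W_i \hookrightarrow \prod_i \F_p^n/W_i$ is injective, so
$$\bigl|\pi^{W_1\cap\cdots\cap W_k}(K)\bigr| \leq \prod_{i=1}^k |\pi^{W_i}(K)| \leq N^k.$$
Choosing $k = n$ hyperplanes in $E$ with trivial intersection (possible by assumption on $E$) immediately gives $N \geq |K|^{1/n}$. The point of the inductive argument is to harvest the additional factor $|E|^{1/(4n(n-1))}$.

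For the inductive step, pick any $W_0 \in E$; by pigeonhole some fiber $K_0$ of $\pi^{W_0}$ has $|K_0| \geq |K|/N$, and after translation $K_0 \subset W_0 \cong \F_p^{n-1}$. The collection $E' := \{W \cap W_0 : W \in E \setminus \{W_0\}\} \subset Gr(n-1,n-2)$ satisfies $\bigcap_{W' \in E'} W' = 0$, $|E'| \geq |E|-1$, and $|\pi^{W \cap W_0}(K_0)| \leq |\pi^W(K)| \leq N$ for each $W \neq W_0$. The inductive hypothesis at level $n-1$ then yields
$$N \gtrsim \min\bigl\{|K_0|^{1/(n-1)}|E'|^{1/(4(n-1)(n-2))},\; |K_0|^{1/(n-2)}\bigr\},$$
and plugging in $|K_0| \geq |K|/N$ and rearranging gives a lower bound on $N$ of the claimed form.

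The main obstacle is verifying the inductive hypothesis at level $n-1$, namely $|K_0||E'|^{(2n-1)/(4(n-2))} \leq p^{n-1}$. Since $(2n-1)/(4(n-2)) > (2n+1)/(4(n-1))$, this is strictly stronger than what the level-$n$ hypothesis delivers automatically, so I expect to have to pass to a carefully chosen sub-collection $E^* \subset E$ (effectively trading $|E|$ for $|E|^{\alpha}$ with some $\alpha < 1$) in order for the induction to close. This truncation is precisely what produces the exponent $1/(4n(n-1))$ in the conclusion rather than the cleaner $1/(4n)$ that naive recursion would predict from $a_n = \tfrac{n-1}{n} a_{n-1}$ with $a_2 = 1/8$. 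Handling this balance while preserving the claimed minimum---together with the degenerate branch $|K_0|^{1/(n-2)}$ when $n=3$ and the boundary cases of the base step---is the main combinatorial work.
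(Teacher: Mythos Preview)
Your inductive scheme has a real gap: the claim $|E'| \geq |E|-1$ is false. The restriction map $W \mapsto W \cap W_0$ from hyperplanes of $\F_p^n$ to hyperplanes of $W_0$ is far from injective---given an $(n-2)$-plane $V \subset W_0$, there are $p+1$ hyperplanes of $\F_p^n$ containing $V$, so $p$ of them besides $W_0$ can all map to the same $V$. A priori you only obtain $|E'| \gtrsim |E|/p$, and iterating this loss $n-2$ times destroys the argument. The truncation you flag is a separate issue (checking the size hypothesis for the inductive call) and does not repair this collapse of $|E'|$. Note, too, that if your recursion closed as written it would deliver the exponent $1/(4n)$, strictly stronger than the theorem's $1/(4n(n-1))$; the weaker exponent in the statement is already a signal that the actual argument is not a clean induction on $n$.

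The paper sidesteps the non-injectivity problem by reducing to dimension $2$ in one shot rather than one dimension at a time. After placing $K$ inside a grid $A_1 \times \cdots \times A_n$ with $|A_i| \leq M$ (using $n$ hyperplanes from $E$ with trivial intersection, as you also do), the key step is to locate a single coordinate $2$-plane $\Pi = \text{span}\{v_i,v_j\}$ for which $|\{W \cap \Pi : W \in E\}| \gtrsim |E|^{1/(n-1)}$. This comes from a pigeonhole: after discarding the $W \in E$ containing $v_i$ (a positive proportion survives for some $i$), each remaining $W$ is uniquely determined by the tuple of lines $(W \cap \text{span}\{v_i,v_j\})_{j \neq i}$, so some $j$ must give $\gtrsim |E|^{1/(n-1)}$ distinct intersections. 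One then applies the Stevens--de~Zeeuw incidence bound (Theorem~\ref{IncidenceBoundsTheorem}) directly in each slice $K \cap (x + \Pi)$ against these $\gtrsim |E|^{1/(n-1)}$ directions and sums over the at most $M^{n-2}$ nonempty slices. The factor $|E|^{1/(n-1)}$ from this pigeonhole is exactly the source of the exponent $\tfrac{1}{4n(n-1)}$ in the conclusion.
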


Note that if the set $E$ has cardinality larger than $O(p^{n - 2})$, then the condition that the elements of $E$ do not contain a common line is automatically satisfied. In this case the lower bound $|K|^\frac{1}{n}|E|^\frac{1}{4n(n - 1)}$ is an improvement on the lower bound $\gtrsim p^{2 - n}|E|$ provided by the first estimate in Chen's theorem when $|E| \ll |K|^\frac{4(n - 1)}{4n(n - 1) - 1} p^\frac{4n(n - 1)(n - 2)}{4n(n - 1) - 1}$.

To extend the theorem to other codimensions we will need the following definition.

\begin{definition}
    A set $E \subseteq Gr(n, m)$ is called non-degenerate if for every $V \in Gr(n, n - m)$ there exists $W \in E$ such that $W \cap V = 0$.
\end{definition}

We give several examples to illustrate the non-degeneracy condition.
\begin{itemize}
    \item Let $v_1, \ldots, v_n$ be a basis and let $E \subseteq Gr(n, m)$ be a set containing the subspace $\text{span} \{v_{i_1}, \ldots, v_{i_m}\}$ for any m-subset $\{i_1, \ldots, i_m\} \subseteq \{1, \ldots, n\}$. Then $E$ is non-degenerate. This is because given any $V \in Gr(n, n - m)$ one can construct the desired $W \in E$ by starting with $W = 0$ and then adding the elements of the basis to $W$ one by one while keeping the intersection $W \cap V$ trivial.
    
    \item Take $k$ sufficiently large depending only on $n$ and let $W_1, \ldots, W_k$ be independent uniformly distributed elements of $Gr(n, m)$. Then with probability $1 - o(1)$ the set $\{W_1, \ldots, W_k\}$ (and any set containing it) is non-degenerate. Indeed, a calculation shows that for any $V \in Gr(n, n - m)$ the probability that it non-trivially intersects $W_1$ is $\mathbb{P}(W_1 \cap V \neq 0) = O(p^{-1})$ (see Proposition \ref{subspaceCounting}). Hence, the probability that $W_i \cap V \neq 0$ for all $i$ is $O(p^{-k})$, and if $k$ is chosen larger than $m(n - m)$, then the probability that such $V$ exists is at most $O(p^{-k}|Gr(n, n - m)|) = o(1)$.
    
    \item Assume the set $E$ satisfies the analog of the non-concentration assumption in Theorem \ref{higherBourgainProjection}, that is, for any $V \in Gr(n, n - m)$ we have $|\{W \in E \mid W \cap V \neq 0\}| \leq p^{-\kappa}|E|$. Then the set $E$ is non-degenerate (trivially).
\end{itemize}
Now we can state the analog of Theorem \ref{lineProjections} for projections on higher-dimensional subspaces.
\begin{theorem}\label{BourgainTypeProjections}
Let $n \geq 2$ and let $0<\varepsilon < \varepsilon_0 = \frac{1}{4n(n - 1)(2n)^{n - 2}}$. Let $K \subseteq \F_p^n$ and  $E \subseteq Gr(n, n - m)$ be a non-degenerate set. Assume that $|K| |E|^{\frac{2n + 1}{4(n - 1)n^{n - 2}}} \leq p^n$ and $|E|^{\frac{1}{4 n^{n - 2}}} \leq |K|$. Then there exists a divisor $d \neq 1$ of $n$ that depends only on $K$ and if $d$ does not divide $m$, then 
$$\exists W \in E \text{ such that } |\pi^{W}(K)| \gtrsim |K|^{\frac{m}{n}}|E|^{\varepsilon}.$$
In particular, the estimate above holds if $n$ and $m$ are coprime.
\end{theorem}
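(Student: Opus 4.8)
\medskip\noindent\textbf{Proof plan.} The argument is an induction on the dimension $n$, with Theorem~\ref{lineProjections} serving both as the base case $m=1$ (valid for every $n$, and making the divisor condition vacuous there) and as the tool that processes one unit of codimension at a time. For $n=2$ only $m=1$ occurs, and the statement is Theorem~\ref{lineProjections} with $d=2$. For the inductive step, fix $K\subseteq\F_p^n$, a non-degenerate $E\subseteq Gr(n,n-m)$ with $2\le m\le n-1$, and the two size constraints. I run the following dichotomy. Form the hyperplane family $E^{(1)}=\{V\in Gr(n,n-1): V\supseteq W\text{ for some }W\in E\}$; non-degeneracy of $E$ forces $\bigcap_{W\in E}W=0$ (a common nonzero vector would lie in $W\cap V$ for every $W\in E$ once extended to some $V\in Gr(n,m)$), hence $\bigcap_{V\in E^{(1)}}V=0$ and Theorem~\ref{lineProjections} applies to $(K,E^{(1)})$. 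Either it, together with the second-moment/incidence mechanism behind it, produces a hyperplane $W_1\in E^{(1)}$ whose projection $|\pi^{W_1}(K)|$ is large \emph{and} for which the restricted family $E':=\{W\in E: W\subseteq W_1\}$ remains non-degenerate inside $W_1\cong\F_p^{n-1}$; or else no such $W_1$ exists.

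\emph{First branch (peeling).} Pick such a $W_1$. For $W\in E'$ the map $\pi^W$ factors through $\pi^{W_1}$; writing $K_z=K\cap(\pi^{W_1})^{-1}(z)$ for $z\in\pi^{W_1}(K)$, these slices sit in cosets of $W_1$, their images $\pi^W(K_z)$ are pairwise disjoint, $\pi^W$ restricted to each slice is a codimension-$(m-1)$ projection in dimension $n-1$, and
\[|\pi^W(K)|=\sum_{z\in\pi^{W_1}(K)}|\pi^W(K_z)|.\]
Applying the inductive hypothesis in dimension $n-1$ (with codimension $m-1$) to the slices --- in the strengthened form that the bound holds for all but a $p^{-\Omega(\varepsilon)}$-fraction of $W\in E'$, so that one $W$ can be chosen good for almost all slices at once --- and summing with convexity, while feeding in that $|\pi^{W_1}(K)|$ is large, yields the \emph{exact} exponent $m/n$: indeed $m/n$ is the fixed point of the recursion in which at each of the $m$ levels the previous projection-size and the Hölder exponent $\tfrac{1}{n-i+1}$ reinforce one another. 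The gains multiply to $\gtrsim|E|^{\varepsilon}$ once $\varepsilon<\varepsilon_0$; one must check that the two constraints survive the restriction to $W_1$ (and to each later hyperplane), which is where a factor of order $n$ is lost per level and the $n^{n-2}$, $(2n)^{n-2}$ enter.

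\emph{Second branch (structure, and the divisor).} If no hyperplane with large projection and non-degenerate remainder can be found --- because the minimum in Theorem~\ref{lineProjections} is attained at the trivial term $|K|^{1/(n-1)}$, or because the restricted family always degenerates, or because the lower-dimensional inductive hypothesis returns its own divisor dividing the relevant codimension --- then the obstruction must be structured: one extracts a proper nonzero subspace $U\subsetneq\F_p^n$ together with a covering of $K$ by $|K|^{o_\varepsilon(1)}$ cosets of $U$ on which $K$ has near-maximal density. This ``concentration implies a coset of a subspace'' step is the genuine Bourgain-type content; I expect it to come from feeding the failure of the incidence improvement back through Theorem~\ref{lineProjections} (or through a point--hyperplane incidence bound in $\F_p^n$). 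One then recurses into $\dim U$ and $\dim(\F_p^n/U)$, both $<n$, distributing $m$ between the two pieces, with $E$ restricted to the relevant Grassmannians and still non-degenerate after pigeonholing. Iterating to the leaves, where every piece of $K$ is ``irreducible'' and the first branch applies, and recombining the estimates (the codimension placed on a piece of dimension $e$ produces a factor $\gtrsim|K|^{(\text{that codimension})/n}$, with a true gain exactly when the piece receives a codimension strictly between $0$ and $e$), one obtains $|\pi^W(K)|\gtrsim|K|^{m/n}|E|^{\varepsilon}$ unless $m$ cannot be distributed so as to give some piece a proper nonzero codimension. In the extremal stuck configuration all leaves have a common dimension $n/d$ with $d\mid n$, $d\ne1$, and then this failure is equivalent to $d\mid m$. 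Taking $d=d(K)$ to be this common value (determined by the flag-type of $K$ alone, hence independent of $m$ and $E$) finishes the induction; $\gcd(n,m)=1$ makes $d\nmid m$ automatic.

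\emph{Main obstacle.} The crux is the self-improving dichotomy: one must show that \emph{any} failure of ``peel a hyperplane with large projection and a non-degenerate remainder'' is forced by a subspace-concentration of $K$ efficient enough --- covering by only $|K|^{o(1)}$ cosets --- to survive the $O(n)$ rounds of recursion without eroding $\varepsilon$ below $\varepsilon_0$; this is precisely where the incidence input of Theorem~\ref{lineProjections} has to be used quantitatively rather than as a black box, and where the $m/n$ exponent (as opposed to the weaker $1/(n-m+1)$ that a naive slicing alone would give) is secured. Secondary difficulties are the strengthening of the inductive statement to ``all but few $W$'' (so a single $W$ works on essentially all the slices $K_z$), the propagation of the two delicate constraints $|K||E|^{(2n+1)/(4(n-1)n^{n-2})}\le p^n$ and $|E|^{1/(4n^{n-2})}\le|K|$ through every restriction, and verifying the exponent bookkeeping.
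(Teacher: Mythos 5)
Your proposal takes a genuinely different route from the paper, and the route has a real gap at its heart. The paper does \emph{not} induct on the dimension $n$; it keeps $n$ fixed and works entirely with the combinatorics of the set $S \subseteq \{1,\dots,n-1\}$ of "bad" codimensions --- i.e.\ those $m$ for which the pigeonholed statement $\mathcal{P}(m,t,\varepsilon)$ fails for $K$. The engine is a pair of additive reductions (Propositions~\ref{additionofDimensionsProposition} and \ref{additionofCodimensionsProposition}): if $\mathcal{P}(m_1)$ and $\mathcal{P}(m_2)$ both fail, then so does $\mathcal{P}(m_1+m_2)$ (when $m_1+m_2<n$, via the elementary bound $|\pi^{W_1\cap W_2}(K)|\le|\pi^{W_1}(K)|\,|\pi^{W_2}(K)|$) or $\mathcal{P}(m_1+m_2-n)$ (when $m_1+m_2>n$, via Proposition~\ref{sumBound}). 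Together with the fact that $\mathcal{P}(1)$ holds (Proposition~\ref{baseCase}, which is just the incidence argument of Theorem~\ref{lineProjections} run with the extra pigeonholing built into $\mathcal{P}$), this forces $S$ to avoid $1$; then the purely arithmetic Proposition~\ref{sequenceProposition} shows that any $S\subseteq\{1,\dots,n-1\}$ closed under these two operations and avoiding $1$ is contained in $d\mathbb{Z}$ for some divisor $d\ne1$ of $n$. The divisor $d$ in the theorem is exactly this divisor of the "bad set"; it has nothing to do with any flag structure of $K$, and no subspace-concentration statement is ever proved or needed.

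The gap in your proposal is precisely where you yourself flag it. Your "second branch" asserts that failure of the peeling step must come from an efficient covering of $K$ by cosets of a proper subspace $U$, and that iterating over $\dim U$ and $\dim(\F_p^n/U)$ produces the divisor $d$ as a common leaf dimension. None of this is established, and it is not the sort of conclusion one can draw from Theorem~\ref{lineProjections} or from Stevens' incidence bound: failure of the peeling step tells you the wrong minimum branch of Theorem~\ref{lineProjections} was hit, or that the restricted family degenerates, but does not yield a sub-exponential cover of $K$ by cosets. In the real numbers such inverse/concentration statements are the deep content of Bourgain's discretized sum--product machinery, and the whole point of He's reduction scheme (which the paper adapts) is that one can avoid proving any such structure theorem by instead exploiting the closure of the bad set under $+$ and $+-n$. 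Your first branch is also under-specified: the slicing identity $|\pi^W(K)|=\sum_z|\pi^W(K_z)|$ and the Hölder computation giving the fixed point $m/n$ are fine, but you need the inductive hypothesis in a strengthened "good for almost every slice" form, and you never verify that the restricted family $\{W\in E: W\subseteq W_1\}$ is non-degenerate inside $W_1$ --- in general it need not be, and the paper sidesteps this by never restricting $E$ to a hyperplane at all. So while your framework is a plausible-looking alternative, as written it assumes the hard step rather than proving it, whereas the paper's mechanism (bad-codimension arithmetic plus the two elementary projection inequalities) actually closes.
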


We believe that the methods of \cite{He18} can be extended over finite fields. In light of this the main advantages of Theorem \ref{BourgainTypeProjections} over (the finite field version of) Theorem \ref{higherBourgainProjection} are:
\begin{itemize}
    \item The non-degeneracy assumption is much weaker than the non-concentration assumption. In fact, it suffices to produce finitely many elements of the set $E$ that together form a non-degenerate set, while the remaining elements of $E$ can be completely arbitrary.
    \item The dependence of the size of the projection on the cardinality of $E$ is explicit as a power of $|E|$.
    \item Explicit value of $\varepsilon_0 = \varepsilon_0(n)$.
\end{itemize}

In Chen's theorem (Theorem \ref{ChenTheorem}) the factor of $p^{m(n - m) - m}$ in the first estimate comes from the fact that the number of $(n - m)$-dimensional subspaces containing a given line is $(1 + o(1))p^{m(n - m) - m}$. While the elements of $E$ not containing a common line is not enough to obtain the full strength Bourgain-type bounds as in Theorem \ref{BourgainTypeProjections}, we can still improve over the first estimate in Theorem \ref{ChenTheorem}.

\begin{theorem} \label{improvementTheorem}
    Fix $\delta > 0$ and an integer $d \geq 1$. Let $E \subseteq Gr(n, n - m)$ be a set of subspaces and let $K \subseteq \F_p^n$. Assume that $|E| \geq 2p^{m(n - m) - m + d - 1}$ and $p^\delta \leq |K| \leq p^{d + 1- \delta}$. Then there exists $\varepsilon = \varepsilon(n, \delta) > 0$ and $W \in E$ such that $$|\pi^{W}(K)| \gtrsim |K|^\frac{d}{1 + d(n - m)}(|E|/p^{m(n - m) - m})^\varepsilon.$$
\end{theorem}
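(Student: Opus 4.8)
The plan is to argue by contraposition: assuming $|\pi^{W}(K)| \le P$ for every $W \in E$, I deduce $P \gtrsim |K|^{d/(1+d(n-m))}\,(|E|\,p^{m-m(n-m)})^{\varepsilon}$. The starting point is the elementary reformulation: $|\pi^{W}(K)| \le P$ means $K$ is covered by at most $P$ cosets of $W$, so by pigeonhole one such coset carries a $\ge 1/P$ fraction of $K$ (more precisely a $\ge 1/|\pi^{W}(K')|$ fraction of any $K' \subseteq K$).

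The core is a greedy slicing. Set $A_{0}=\F_p^{n}$, $K_{0}=K$; at step $i$ choose $W_{i}\in E$, let $A_{i}$ be the intersection of $A_{i-1}$ with the heaviest coset of $W_{i}$ and $K_{i}=K\cap A_{i}$, so $|K_{i}|\ge |K_{i-1}|/P$. Writing $U_{i-1}$ for the linear part of $A_{i-1}$ and $r=\dim U_{i-1}$, the number $\dim A_{i}=\dim(W_{i}\cap U_{i-1})$ is what must be made small; the count $|\{W\in Gr(n,n-m):\dim(W\cap U)\ge s\}|$ is of order $p^{m(n-m)-s(s+m-r)}$ (a Schubert‑cell estimate, in the range $r\le n-m$, which holds after step one). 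Since $|E|\ge 2p^{m(n-m)-m+d-1}$, whenever $s(s+m-r)\ge m-d+1$ this ``bad'' set is smaller than $|E|$, and one checks that $U_{i-1}\subseteq U_{1}\subseteq W_{j}$ for all $j<i$, so the previously used subspaces lie in the bad set automatically and a valid new $W_{i}$ exists realizing $\dim A_{i}\le s_{i}-1$ for the least admissible $s_{i}$. Solving the quadratic shows the dimension drops by the full $m$ while $r$ is large, by a bounded (in $n$) amount in a transitional range, and reaches $0$ in one step once $r\le d$; hence after a number $T=O_{n}(1)$ of steps $A_{T}$ is a point and $1\ge |K|/P^{T}$, giving a bound $P\gtrsim|K|^{1/T}$.

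When $d=1$ the bookkeeping gives $T\le (n-m)+1$, so this already yields the main term $|K|^{1/(n-m+1)}=|K|^{d/(1+d(n-m))}$ (and in fact more when $m>1$, since full drops of size $m$ shorten the slicing). For $d\ge 2$, however, pure slicing is too weak — e.g. for $n-m=1$ it only produces $T=2$, i.e. $|K|^{1/2}$, whereas we want $|K|^{d/(1+d)}$ — so the sharp exponent and the surplus factor $(|E|\,p^{m-m(n-m)})^{\varepsilon}$ must come from an arithmetic input. I would run the slicing only until $K_{i}$ has been driven into an affine space of dimension $\approx d+1$ (the hypothesis $|K|\le p^{d+1-\delta}$ keeps this regime meaningful and keeps the target exponent below $m\ge\dim\pi^{W}$), and at that stage replace the remaining slices by a finite‑field incidence/sum–product estimate in the spirit of Theorem~\ref{planarProjections} and of the machinery behind Theorems~\ref{lineProjections}–\ref{BourgainTypeProjections}, applied to the many directions still coming from $E$. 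Passing, via $|\pi^{W}(K)|\ge|\pi^{W'}(K)|$ for $W\subseteq W'$, to the set $E'$ of hyperplanes containing an element of $E$ is convenient here: one has $|E'|\ge |E|\,p^{m-m(n-m)}\ge 2p^{d-1}$ and $\bigcap_{W'\in E'}W'=0$ (since $\bigcap_{W'\supseteq W}W'=W$), which keeps enough directions available; the excess $|E'|p^{-(d-1)}\approx|E|\,p^{m-m(n-m)+1-d}$ over the threshold is exactly what converts into the $\varepsilon$‑power gain, and since $|K|\ge p^{\delta}$ a fixed power of $|K|$ absorbs any resulting exponent loss, so $\varepsilon=\varepsilon(n,\delta)$ can be chosen small enough.

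The main obstacle is this last ingredient: the bare geometric slicing has essentially no slack, so obtaining both the sharp exponent $d/(1+d(n-m))$ and a genuine polynomial improvement in $|E|/p^{m(n-m)-m}$ requires an honest arithmetic estimate at one reduction step, together with care that the gain survives the multiplicativity of the $O_{n}(1)$ slices rather than being diluted. The remaining points — the Schubert‑cell counts, the distinctness and non‑emptiness of the greedily chosen $W_{i}$, the passage to $E'$, and the verification that the various parameter constraints are compatible — are routine.
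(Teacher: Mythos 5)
Your plan is in the same general spirit as the paper's — reduce the ambient dimension by slicing, and at some stage inject an arithmetic (incidence/sum--product) estimate — but the proposal as written has a genuine gap that you yourself flag: the arithmetic estimate is never supplied, and it is precisely the content of the proof. Let me be concrete about what is missing and where your plan diverges from what actually works.

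The paper proves a more flexible statement, Proposition~\ref{improvementProposition}, by induction on $n$, where the set $E$ is allowed to contain subspaces of \emph{varying} dimensions between $0$ and $m$. Your ``Case~1'' (heavy-fiber slicing) is essentially its Case~1, and you are right that this step alone yields only $|K|^{1/T}$. The crucial terminal step — what you call ``replace the remaining slices by a finite-field incidence/sum--product estimate'' — is the paper's Case~2 and requires three specific ideas that do not appear in your proposal: (i)~after slicing, the surviving directions are \emph{lines}, and the spread hypothesis on $E$ (more precisely, that no $d$-dimensional subspace is hit by all of $E$) forces their span $V$ to have dimension at least $d+1$; (ii)~one then slices $K$ by cosets $x+V$ and applies Theorem~\ref{BourgainTypeProjections} \emph{inside each slice} $K_x = K\cap(x+V)$, using that $|K_x|\le |K|\le p^{d+1-\delta}\le p^{\dim V - \delta}$ so the size constraint of that theorem holds, and the non-degeneracy hypothesis holds because the lines span $V$; this gives $|\pi^W(K_x)|\gtrsim |K_x|^{(\dim V-1)/\dim V}|E|^\varepsilon \ge |K_x|^{d/(d+1)}k^\varepsilon$ for a \emph{majority} of $W\in E$; (iii)~an averaging/majority-vote argument then locates a single $W$ that is good for slices carrying at least half the mass of $K$, and summing $|K_x|^{d/(d+1)}$ over those slices gives the claimed bound. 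None of steps (i)--(iii) is present in your outline, and they are where the exponent $d/(1+d(n-m))$ and the polynomial gain in $|E|/p^{m(n-m)-m}$ actually come from.

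Your proposed passage to the set $E'$ of hyperplanes containing elements of $E$ is not what the paper does, and it is unclear it could work: you lose information by enlarging each $W$ to a hyperplane (the projection only gets smaller), whereas the paper goes in the opposite direction, \emph{intersecting} elements of $E$ down to lines inside the sliced ambient space, which is what makes a hyperplane-projection theorem (codimension $1$ in $V$) applicable with the right exponent $\ge d/(d+1)$. Also, your Schubert-cell bookkeeping for the $d=1$ case is plausible, but the paper does not need to count this carefully: its induction on $n$ absorbs the dimension reduction automatically, and the deduction of Theorem~\ref{improvementTheorem} from Proposition~\ref{improvementProposition} is then a short verification (via Proposition~\ref{subspaceCounting}) that the hypotheses $|E|\ge 2p^{m(n-m)-m+d-1}$ and $k=\tfrac12 |E|/p^{m(n-m)-m}$ guarantee the two ``spread'' conditions. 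I'd suggest trying to reorganize your argument around the inductive proposition and identifying the terminal case explicitly as an application of Theorem~\ref{BourgainTypeProjections} in $\dim V\ge d+1$ dimensions.
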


As we mentioned above, if we take $d = 1$ this theorem improves the bound $\frac{|E|}{4p^{m(n - m) - m}}$ from Theorem \ref{ChenTheorem} in the range $|E|/p^{m(n - m) - m} \ll |K|^\frac{1}{(1 - \varepsilon)(1 + n - m)}$. The condition $|K| \leq p^{d + 1 - \delta}$ is needed to rule out the following example: $K = (d + 1)$-subspace, $E = $ lines inside $K$, in which case $|\pi^{W}(K)| = |K|^\frac{d}{d + 1}$ for any $W \in E$.

The last result of the paper establishes 
Conjecture \ref{planarConjecture} for very small sets using the results of Grosu \cite{Gro13}.

\begin{theorem}\label{smallSetProjections}
    Let $K \subseteq \F_p^2$ and $E \subseteq Gr(2, 1)$ satisfy the bound $2 \leq |E|, |K| \leq \frac{1}{5} (\log_2 \log_6 \log_{18} p - 1)$. Then there exists $W \in E$ such that $$|\pi^W(K)| \gtrsim \min\{|K|^\frac{1}{2}|E|^\frac{1}{2}, |K| \}.$$
\end{theorem}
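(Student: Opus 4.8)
The plan is to transfer the problem from $\F_p$ to $\C$ (or to a number field, then to $\C$), where the analogous statement is a consequence of Bourgain's projection theorem in the plane (Theorem~\ref{BourgainProjectionTheorem}), or more precisely of its discretized/combinatorial incarnation in the spirit of Conjecture~\ref{planarConjecture}. The tool that makes this transfer possible is a result of Grosu \cite{Gro13}, which says that any configuration of a bounded number of points (and, by encoding, subspaces) in $\F_p^k$ can be lifted to a configuration in $\C^k$ with exactly the same polynomial/algebraic incidence relations, provided $p$ is large enough in terms of the size of the configuration; the triple-logarithm in the hypothesis is exactly the Grosu growth bound. So the first step is to encode the data: the set $K = \{x_1,\dots,x_{|K|}\} \subseteq \F_p^2$ together with the lines $E = \{\ell_1,\dots,\ell_{|E|}\}$ (each $\ell_j$ a one-dimensional subspace, recorded by a direction vector) as a point configuration in some $\F_p^N$ with $N = O(|K| + |E|)$, and to note that the quantity we care about, namely $|\pi^{W}(K)| = |\{x_i + W\}|$, is expressible purely in terms of which differences $x_i - x_j$ lie in $W$, i.e. in terms of vanishing of certain $2\times 2$ determinants. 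These are polynomial conditions, so they are preserved by Grosu's lift.

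Second, I would invoke Grosu's theorem to obtain points $\tilde{x}_i \in \C^2$ and lines $\tilde{\ell}_j \subseteq \C^2$ such that for every $i,j$ and every $W$ among the $\tilde\ell_j$, we have $\tilde x_i - \tilde x_j \in \tilde W$ if and only if $x_i - x_j \in W$ in $\F_p^2$; consequently $|\pi^{\tilde W}(\tilde K)| = |\pi^{W}(K)|$ for each $W \in E$. Since $|K|, |E|$ are bounded by an absolute constant (in fact by $\tfrac15(\log_2\log_6\log_{18}p - 1)$), the required lower bound on $p$ in Grosu's theorem is met. At this point the problem is reduced to: given a finite point set $\tilde K \subseteq \C^2 \cong \R^4$ and a finite set $\tilde E$ of complex lines (real $2$-planes) with $2 \le |\tilde E|, |\tilde K|$, show that some $\tilde W \in \tilde E$ satisfies $|\pi^{\tilde W}(\tilde K)| \gtrsim \min\{|\tilde K|^{1/2}|\tilde E|^{1/2}, |\tilde K|\}$.

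Third, I would prove this complex (equivalently, Euclidean) statement directly. The bound $\min\{|\tilde K|^{1/2}|\tilde E|^{1/2}, |\tilde K|\}$ is of Szemer\'edi--Trotter / Bourgain--projection type and is the ``base case'' of Conjecture~\ref{planarConjecture} without any $\delta$-loss, now available because we are in characteristic zero. One clean route: if no $\tilde W$ does better than $\min\{\dots\}$, then every projection $\pi^{\tilde W}(\tilde K)$ is small, which forces many coincidences $\pi^{\tilde W}(x_i) = \pi^{\tilde W}(x_j)$, i.e. many pairs $(i,j)$ with $x_i - x_j$ parallel to the line $\tilde W$; summing over $\tilde W \in \tilde E$ and using that a single nonzero difference vector is parallel to at most one line gives $\sum_{\tilde W} (\text{number of collisions}) \le \binom{|\tilde K|}{2}$, while the smallness assumption forces this sum to be $\gtrsim |\tilde E| \cdot |\tilde K|^2 / (\min\{\dots\})^2 \gtrsim |\tilde K|$, and an arithmetic comparison of the two sides yields the claim (this is essentially Chen's first-estimate argument, which works verbatim over any field and already gives $|\pi^W(K)| \gtrsim |K|^2 |E| / (|K|^2 + |E| \cdot (\text{stuff}))$-type bounds; one has to check it delivers exactly $\min\{|K|^{1/2}|E|^{1/2}, |K|\}$). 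Alternatively, invoke the planar projection result of Lund--Pham--Vinh or the real Bourgain/Orponen--Shmerkin machinery — but since the constants here are genuinely absolute and the sets are tiny, the elementary double-counting is cleanest and self-contained.

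The main obstacle I expect is the \emph{encoding and transfer} step rather than the final geometric inequality: one has to set up Grosu's correspondence so that it respects not just incidences of points but the lattice of subspace-membership relations among difference vectors (and does not accidentally create new collisions in $\C$ that were absent in $\F_p$, nor destroy existing ones), and one must track exactly how the size of the configuration that must be lifted ($O(|K|+|E|)$ points, plus auxiliary points encoding the lines and the determinant conditions) feeds into Grosu's $p$-threshold, to confirm it is dominated by $\tfrac15(\log_2\log_6\log_{18}p-1)$. Getting the iterated-logarithm bookkeeping to match the statement is the delicate part; the rest is routine once the dictionary is in place.
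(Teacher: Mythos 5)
Your broad strategy---lift the $\F_p$ configuration to $\C$ via Grosu's theorem and then finish in characteristic zero---is the same strategy the paper uses. However, there are two real problems with the way you carry it out, the second of which is fatal.

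First, the encoding. The paper does not try to transport the projection problem itself. It transports the \emph{incidence} problem: it writes each line of $\F_p^2$ as $ax + by + c = 0$, collects all coordinates and coefficients into a set $A$, and applies Grosu's Theorem~3, which preserves exactly the five-variable relation $z_1 z_2 + z_3 z_4 + z_5 = 0$. The incidence $p \in \ell$ is literally of that form, so the transfer is immediate. Your plan instead lifts ``which differences $x_i - x_j$ lie in $W$,'' which is a relation of the form $a(x_i^{(1)} - x_j^{(1)}) + b(x_i^{(2)} - x_j^{(2)}) = 0$. This is not of the form Grosu's cited theorem preserves; you would have to either introduce auxiliary variables and re-derive a preservation statement, or (as the paper does) simply notice that the collection $\mathcal L$ of all translates of the lines in $E$ that cover $K$ is an ordinary set of lines and reduce everything to point--line incidences. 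The latter is both cleaner and what actually lands in the scope of Grosu's theorem as stated.

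Second, and more seriously, the final geometric step is wrong. Your ``elementary double-counting / Chen first-estimate'' argument counts collision pairs: if every $|\pi^{\tilde W}(\tilde K)| \leq M$, then each $\tilde W$ has at least $|\tilde K|^2/M$ ordered collisions, the $|\tilde K|$ diagonal ones occur for every $\tilde W$, and each nonzero difference vector belongs to at most one line through the origin. This gives $|\tilde E|\bigl(|\tilde K|^2/M - |\tilde K|\bigr) \leq |\tilde K|^2 - |\tilde K|$, hence $M \gtrsim \min\{|E|, |K|\}$. That is \emph{not} the claimed $\min\{|K|^{1/2}|E|^{1/2}, |K|\}$: when $|E| < |K|$ the claimed bound $|K|^{1/2}|E|^{1/2}$ is strictly larger than $|E|$, so double-counting loses. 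This is precisely the gap between the Kaufman-type estimate (Chen's first bullet, giving exponent $t \geq \dim E$) and the Bourgain-type estimate (exponent $t \geq \tfrac12(\dim K + \dim E)$). To reach the latter you genuinely need a Szemer\'edi--Trotter input. The paper supplies it via T\'oth's Szemer\'edi--Trotter theorem over $\C$: after the Grosu lift one has a bona fide set of points and lines in $\C^2$, T\'oth gives $\mathcal I(P,\mathcal L) \lesssim |P|^{2/3}|\mathcal L|^{2/3} + |P| + |\mathcal L|$, and then the same incidence-to-projection conversion as in the proof of Theorem~\ref{lineProjections} delivers $M \gtrsim \min\{|K|^{1/2}|E|^{1/2}, |K|\}$. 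Also note that your fallback of invoking Lund--Pham--Vinh or Orponen--Shmerkin does not apply after the lift: those are statements about $\R^2$ with real lines, whereas the lifted configuration lives in $\C^2 \cong \R^4$ with complex lines (real $2$-planes), so the right input there is T\'oth's complex Szemer\'edi--Trotter, not the real planar machinery.
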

This theorem relies on the the finite field version of the celebrated Szemeredi-Trotter bound \cite{Sze83} obtained in \cite{Gro13} for very small sets. In fact, it is conjectured that the assumptions on the size of the set in this incidence bound can be significantly relaxed which, if proven, would imply the sharp projection bound for sets of larger cardinality. We refer the reader to \cite{Bou04}, \cite{Jon11}, \cite{Ste16}, \cite{Vin11} for partial results on this conjecture.

\section{Preliminaries and the outline of the proof}
Notation: throughout the paper, we write 
\begin{itemize}
    \item $A \lesssim B$ if $A \leq CB$,
    \item $A \lessapprox B$ if $A \lesssim (\log p)^{C} B$
\end{itemize}
where $C$ is some constant that may depend only on the dimension $n$ but not on $p$, $K$ or $E$.

We will make use of an analog of the inner product on $\F_p^n$.
\begin{definition}
For $x = (x_1, \ldots, x_n) \in \F_p^n$ and $y = (y_1, \ldots, y_n) \in \F_p^n$ define $\langle x, y \rangle = \sum_{i = 1}^n x_i y_i$.
\end{definition}
\begin{definition}
Let $S \in \F_p^n$ be a subset. We define the orthogonal complement as $S^\bot := \{x \in \F_p^n \mid \langle x, y \rangle = 0 \text{ for all } y \in S\}$.
\end{definition}
It follows from the fact that $\langle \bullet, \bullet \rangle$ is a non-degenerate bilinear form that for linear subspaces $W, W_1, W_2 \subseteq \F_p^n$ we have:
\begin{itemize}
    \item $\dim W^\bot = n - \dim W$
    \item $(W^\bot)^\bot = W$
    \item $(W_1 + W_2)^\bot = W_1^\bot \cap W_2^\bot$
    \item $(W_1 \cap W_2)^\bot = W_1^\bot + W_2^\bot$.
\end{itemize} 
The key tool in the proof of the $m = 1$ case of the main theorem is the following incidence bound of Stevens \cite{Ste16}:

\begin{theorem}\label{IncidenceBoundsTheorem}
    Let $\mathcal{L}$ be a set of lines in $\F_p^2$ and let $A, B \subseteq \mathbb{F}_p$ be sets satisfying $|A| \leq |B|$, $|A||B|^2 \leq |\mathcal{L}|^3$ and $|A||\mathcal{L}| \lesssim p^2$. Then $$I(A \times B, \mathcal{L}) \lesssim |A|^\frac{3}{4}|B|^\frac{1}{2}|\mathcal{L}|^\frac{3}{4} + |\mathcal{L}|$$
\end{theorem}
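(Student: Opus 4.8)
The plan is to deduce the bound from Rudnev's point–plane incidence theorem in $\mathbb{F}_p^3$ (this is essentially the route of \cite{Ste16}), supplemented by the standard small-set (Szemer\'edi--Trotter-type) incidence bound over $\mathbb{F}_p$ in the range where the point set is small. Write $I:=I(A\times B,\mathcal{L})$. First I would prune the degenerate line configurations: a family of lines of a single slope (vertical, horizontal, or any slope found to be popular by dyadic pigeonholing) meets $A\times B$ in at most $|A|$ points per line, and a pencil of lines through one common point meets each point of $A\times B$ at most once, so in either case the corresponding number of incidences is at most $|A||B|$ — and the hypothesis $|A||B|^2\le|\mathcal{L}|^3$ is precisely what makes $|A||B|\lesssim |A|^{3/4}|B|^{1/2}|\mathcal{L}|^{3/4}$. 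Thus we may assume there is no popular slope and no popular pencil; in particular every line is $\{y=sx+t\}$ with $s\neq 0$, parametrized by $(s,t)$, and (translating so that $0\notin B$) we lift the grid via $(a,b)\mapsto(b,b^2,ab)$ to a point set $P^{*}=\{(b,b^2,ab):a\in A,\ b\in B\}\subseteq\mathbb{F}_p^3$ with $|P^{*}|\approx|A||B|$, lying on the surface $\{Y=X^2\}$; multiplying $b=sa+t$ through by $b$ rewrites the incidence as $Y-sZ-tX=0$, so each line $(s,t)$ becomes a plane $\pi_{s,t}$ and $I=I(P^{*},\Pi^{*})$ with $|\Pi^{*}|=|\mathcal{L}|$.

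The decisive feature is that the only lines of $\mathbb{F}_p^3$ carrying many points of $P^{*}$ are the lines $\{X=b_0,\ Y=b_0^2\}$, each carrying at most $|A|\le |P^{*}|^{1/2}$ points — this is exactly where $|A|\le|B|$ enters — so the collinearity error term in Rudnev's theorem is automatically subsumed into its main term, while the hypothesis $|A||\mathcal{L}|\lesssim p^2$ supplies the cardinality condition that theorem requires. Applying Rudnev's theorem to $(P^{*},\Pi^{*})$ — after swapping the roles of points and planes when $|A||B|>|\mathcal{L}|$, the absence of popular slopes and pencils being what then controls the collinearity of the dual point configuration — produces a Rudnev-type estimate for $I$; combining it with the small-set bound $I\lesssim (|A||B|)^{2/3}|\mathcal{L}|^{2/3}+|A||B|+|\mathcal{L}|$ over $\mathbb{F}_p$ in the complementary regime, and with the trivial bound $I\lesssim|A||\mathcal{L}|$ in the remaining corner, one checks (comparing exponents via $|A||B|^2\le|\mathcal{L}|^3$ throughout) that in every case $I\lesssim |A|^{3/4}|B|^{1/2}|\mathcal{L}|^{3/4}+|\mathcal{L}|$.

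The step I expect to be the real obstacle is precisely this final balancing. A careless application of Rudnev's theorem only reproduces the trivial bound $I\lesssim|A||\mathcal{L}|$, so every bit of the saving must be extracted from the three hypotheses, which have to be matched to the argument in just the right way: $|A|\le|B|$ makes the rich lines of $P^{*}$ too short to hurt, $|A||B|^2\le|\mathcal{L}|^3$ makes the pruned degenerate-pencil contributions negligible, and $|A||\mathcal{L}|\lesssim p^2$ keeps us inside the range where Rudnev's theorem (and the small-set bound) applies. Verifying that the pruned slopes and pencils cannot quietly reappear, and that the crossover between the ``Rudnev regime'' ($|A||B|$ large) and the ``Szemer\'edi--Trotter/trivial regime'' ($|A||B|$ small) is seamless with no extra polynomial loss at the boundaries, is where the care is needed.
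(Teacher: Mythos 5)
First, note that the paper does not prove this statement at all: it is quoted verbatim as the incidence bound of Stevens--de Zeeuw \cite{Ste16} and used as a black box, so there is no internal proof to compare against. Judged against the actual proof in \cite{Ste16}, your proposal correctly identifies the engine (Rudnev's point--plane theorem), the role of $|A||\mathcal{L}|\lesssim p^2$ as Rudnev's cardinality condition, the role of $|A|\le|B|$ in controlling collinear points, and the role of $|A||B|^2\le|\mathcal{L}|^3$ in absorbing an $|A||B|$ error term. But the architecture you propose --- lift the incidence count itself to a point--plane incidence count $I=I(P^*,\Pi^*)$ with $|P^*|=|A||B|$, $|\Pi^*|=|\mathcal{L}|$, and apply Rudnev directly --- cannot produce the stated exponents, and no amount of care at the "final balancing" will fix it. Rudnev's main term for that configuration is $\min\bigl(|\mathcal{L}|(|A||B|)^{1/2},\,|A||B||\mathcal{L}|^{1/2}\bigr)$, and already for $|A|=|B|=n$, $|\mathcal{L}|=n^2$ (which satisfies all three hypotheses) this is $n^3$, whereas the target $|A|^{3/4}|B|^{1/2}|\mathcal{L}|^{3/4}$ is $n^{11/4}$. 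The loss is a genuine power of $n$, not a boundary effect, and there is no off-the-shelf $(|P||\mathcal{L}|)^{2/3}$ Szemer\'edi--Trotter bound over $\F_p$ to fall back on in that regime (that is essentially the theorem being proved).

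The missing idea is an initial Cauchy--Schwarz over $B$ \emph{before} Rudnev is invoked. Writing $n(b)=|\{(a,\ell):\ell(a)=b\}|$ one has $I=\sum_{b\in B}n(b)$, hence $I^2\le|B|\sum_{y\in\F_p}n(y)^2=|B|\cdot Q$ with $Q=|\{(a,a',\ell,\ell'):s_\ell a+t_\ell=s_{\ell'}a'+t_{\ell'}\}|$. It is this quadruple count $Q$, not $I$, that is a point--plane incidence count --- between the $|A||\mathcal{L}|$ points $(s_\ell,t_\ell,a')$ and the $|A||\mathcal{L}|$ planes $ax+y-s_{\ell'}z-t_{\ell'}=0$ --- and Rudnev gives $Q\lesssim(|A||\mathcal{L}|)^{3/2}+\max(|A|,k)\,|A||\mathcal{L}|$, where $k$ is the largest pencil in $\mathcal{L}$; taking square roots yields exactly $|A|^{3/4}|B|^{1/2}|\mathcal{L}|^{3/4}$ plus the error terms you already know how to absorb using $|A|\le|B|$, $|A||B|^2\le|\mathcal{L}|^3$ and the pencil pruning. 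So your hypothesis bookkeeping is right, but the object to which Rudnev must be applied is the energy $Q$, not the incidence count, and without that step the approach fails.
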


We will also need the following elementary propositions.

\begin{proposition}\label{niceBasis}
    If hyperplanes $W_1, \ldots, W_n \in Gr(n, n - 1)$ satisfy $W_1 \cap \ldots \cap W_n = 0$, then there exists a basis $v_1, \ldots, v_n$ of $\F_p^n$ satisfying 
    $$v_i \in W_j \Leftrightarrow i \neq j.$$
\end{proposition}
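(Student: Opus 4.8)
The plan is to build the basis one vector at a time. For each index $i \in \{1, \ldots, n\}$, consider the subspace $U_i := \bigcap_{j \neq i} W_j$. Each $W_j$ is a hyperplane, so cutting out $n-1$ of them brings the dimension down by at most $n-1$; hence $\dim U_i \geq 1$. On the other hand, $U_i \cap W_i = W_1 \cap \cdots \cap W_n = 0$, so $U_i$ is disjoint from the hyperplane $W_i$, which forces $\dim U_i \leq 1$. Therefore $\dim U_i = 1$ exactly; pick any nonzero vector $v_i \in U_i$. By construction $v_i \in W_j$ for every $j \neq i$, and $v_i \notin W_i$ (since $v_i \neq 0$ and $U_i \cap W_i = 0$). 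This already gives the biconditional $v_i \in W_j \Leftrightarrow i \neq j$.

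It remains to check that $v_1, \ldots, v_n$ is a basis, i.e.\ that these $n$ vectors are linearly independent. Suppose $\sum_i c_i v_i = 0$ for scalars $c_i \in \F_p$. Fix an index $k$ and apply, so to speak, the hyperplane $W_k$: every $v_i$ with $i \neq k$ lies in $W_k$, so reducing the relation modulo $W_k$ (i.e.\ in the one-dimensional quotient $\F_p^n / W_k$) kills all terms except $c_k v_k$, leaving $c_k \bar v_k = 0$ in $\F_p^n/W_k$. Since $v_k \notin W_k$, its image $\bar v_k$ is nonzero in the quotient, so $c_k = 0$. As $k$ was arbitrary, all $c_i$ vanish, and the $v_i$ are independent, hence a basis of the $n$-dimensional space $\F_p^n$.

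I do not expect any serious obstacle here; the statement is genuinely elementary and the only thing to be careful about is the two-sided dimension count showing $\dim U_i = 1$ (using both that $n-1$ hyperplane intersections drop dimension by at most $n-1$, and that $U_i$ meets $W_i$ trivially). One small alternative for the independence step, if one prefers to avoid quotients: pass to orthogonal complements, writing $W_j = \langle w_j \rangle^\bot$ for suitable nonzero $w_j$; then $W_1 \cap \cdots \cap W_n = 0$ means $w_1, \ldots, w_n$ span $\F_p^n$, hence form a basis, and the condition $v_i \in W_j \Leftrightarrow i \neq j$ translates to $\langle v_i, w_j \rangle = 0 \Leftrightarrow i \neq j$, i.e.\ the matrix $(\langle v_i, w_j \rangle)_{i,j}$ is diagonal with nonzero diagonal entries, so it is invertible and the $v_i$ are independent.
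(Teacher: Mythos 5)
Your proof is correct and takes essentially the same approach as the paper's: pick a nonzero $v_i \in \bigcap_{j \neq i} W_j$ (which cannot lie in $W_i$), and deduce independence from the fact that for each $k$ all $v_j$ with $j \neq k$ lie in $W_k$ while $v_k$ does not. The extra observation that $\dim U_i = 1$, and the quotient-space phrasing of the independence step, are cosmetic refinements of the same argument.
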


\begin{proof}
    Observe that for any $i$ we have $\bigcap_{j \neq i} W_j \not \subseteq W_i$ (otherwise the condition $W_1 \cap \ldots \cap W_n = 0$ is not satisfied). Fix some $v_i \in \bigcap_{j \neq i} W_j \setminus W_i$. Then the desired property $v_i \in W_j \Leftrightarrow i \neq j$ is clearly satisfied. To see that $v_1, \ldots, v_n$ is a basis note that for any $1 \leq i \leq n$ we have $\text{span}_{j \neq i} v_j \subseteq W_i$ while $v_i \not \in W_i$.
\end{proof}

The second elementary proposition bounds the size of the projection with fiber $W_1 \cap W_2$ in terms of the sizes of the projections with fibers $W_1$ and $W_2$.

\begin{proposition}\label{intersectionBound}
    Let $W_1, W_2$ be subspaces in $\F_p^n$ and let $K \subseteq \F_p^n$. Then $$|\pi^{W_1 \cap W_2}(K)| \leq |\pi^{W_1}(K)| |\pi^{W_2}(K)|.$$
\end{proposition}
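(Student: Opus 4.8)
The plan is to exhibit an injection from $\F_p^n/(W_1 \cap W_2)$ into the product $\F_p^n/W_1 \times \F_p^n/W_2$ that is compatible with the projection maps, so that the image of $K$ on the left lands inside the product of the images of $K$ on the right. Concretely, I would define $\Phi \colon \F_p^n/(W_1 \cap W_2) \to \F_p^n/W_1 \times \F_p^n/W_2$ by $\Phi(x + (W_1 \cap W_2)) = (x + W_1,\, x + W_2)$, i.e. $\Phi(\pi^{W_1 \cap W_2}(x)) = (\pi^{W_1}(x), \pi^{W_2}(x))$.

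The first step is to check that $\Phi$ is well defined: if $x + (W_1 \cap W_2) = y + (W_1 \cap W_2)$ then $x - y \in W_1 \cap W_2 \subseteq W_i$, so $x + W_i = y + W_i$ for $i = 1, 2$. The second step is to check injectivity: if $x + W_1 = y + W_1$ and $x + W_2 = y + W_2$, then $x - y \in W_1$ and $x - y \in W_2$, hence $x - y \in W_1 \cap W_2$, so $x + (W_1 \cap W_2) = y + (W_1 \cap W_2)$. (One could alternatively phrase both steps at once as the standard fact that $\F_p^n/(W_1 \cap W_2)$ embeds into $\F_p^n/W_1 \oplus \F_p^n/W_2$ via the diagonal.)

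The final step is to restrict to $K$: by construction $\Phi$ maps $\pi^{W_1 \cap W_2}(K)$ into $\pi^{W_1}(K) \times \pi^{W_2}(K)$, and since $\Phi$ is injective this gives
\[
|\pi^{W_1 \cap W_2}(K)| \le |\pi^{W_1}(K) \times \pi^{W_2}(K)| = |\pi^{W_1}(K)|\,|\pi^{W_2}(K)|,
\]
as desired. There is no real obstacle here; the only point requiring a moment's care is the well-definedness and injectivity of $\Phi$, both of which follow immediately from $W_1 \cap W_2 \subseteq W_i$ and the definition of intersection, respectively.
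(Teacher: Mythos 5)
Your proof is correct. It is essentially the same argument as the paper's, phrased more abstractly: the paper covers $K$ by the pairwise intersections $(x_i + W_1) \cap (y_j + W_2)$ of cosets, which works precisely because a coset of $W_1 \cap W_2$ is determined by the pair of cosets of $W_1$ and $W_2$ containing it — i.e.\ because your diagonal map $\Phi$ is injective. Your version isolates that injection cleanly; the paper's version keeps it implicit in the covering. Both are fine.
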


\begin{proof}
    Let $M_1 := |\pi^{W_1}(K)|$ and $M_2 := |\pi^{W_1}(K)|$. Then for some $x_1, \ldots, x_{M_1} \in \F_p^n$ we have $K \subseteq \bigcup_{i = 1}^{M_1} (x_i + W_1)$. At the same time, for some $y_1, \ldots, y_{M_2} \in \F_p^n$ we have $K \subseteq \bigcup_{j = 1}^{M_2} (y_j + W_2)$. Thus $$K \subseteq (\bigcup_{i = 1}^{M_1} (x_i + W_1)) \cap (\bigcup_{j = 1}^{M_2} (y_j + W_2)) = \bigcup_{i, j} ((x_i + W_1) \cap (y_j + W_2)).$$
    Each of the sets $(x_i + W_1) \cap (y_j + W_2)$ is either empty or a translate of $W_1 \cap W_2$. Therefore, we see that $K$ can be covered by at most $M_1M_2$ translates of $W_1 \cap W_2$ and, hence, $|\pi^{W_1 \cap W_2}(K)| \leq M_1M_2$.
\end{proof}

The proofs of the main results proceed as follows: first we establish the $m = 1$ case (Theorem \ref{lineProjections}). After that we apply reductions similar to the one in \cite{He18} to establish Theorem $\ref{BourgainTypeProjections}$. Finally, from that we infer Theorem $\ref{improvementTheorem}$.

To prove the abovementioned reductions it will be convenient to introduce the following notation. For dimensions $n \geq 2$ and $1 \leq m \leq n - 1$, parameters $t, \varepsilon > 0$ and a subset $K \subseteq \F_p^n$ let $\mathcal{P}(m, t, \varepsilon)$ denote the following statement: \\
\textbf{For any non-degenerate set $E \subseteq Gr(n, n - m)$ with $|E| \geq p^t$ there exists a subset $K' \subseteq K$ and $W \in E$ such that $$|\pi^{W}(K'')| \gtrapprox|K|^\frac{m}{n} p^{\varepsilon t} \text{ for any } K'' \subseteq K' \text {with } |K''| \gtrapprox |K'|.$$}

To ease the notation we will write $\mathcal{P}(m, t, \varepsilon)$ and suppress the dependence on $K$ and $n$ since they will be assumed to be fixed. The proof of Theorem \ref{BourgainTypeProjections} proceeds as follows: first it follows from (the proof of) Theorem \ref{lineProjections} that for a small enough $\varepsilon = \varepsilon(n)$ the statement $\mathcal{P}(1, t, \varepsilon)$ holds. 

\begin{proposition}\label{baseCase}
    Let $n \geq 2$ and $\varepsilon = \frac{1}{4n(n - 1)}$. Then for any $K \geq \F_p^n$ and $t \geq 0$ satisfying $p^{t/4} \leq |K| \leq p^{n - \frac{2n + 1}{4(n - 1)}t}$ the statement $\mathcal{P}(1, t, \varepsilon)$ holds. 
\end{proposition}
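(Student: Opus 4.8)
The plan is to reduce the statement $\mathcal{P}(1, t, \varepsilon)$ to the incidence bound of Stevens (Theorem \ref{IncidenceBoundsTheorem}), following the strategy sketched for Theorem \ref{lineProjections}. First I would use the non-degeneracy of $E \subseteq Gr(n, n-1)$ to extract $n$ hyperplanes $W_1, \dots, W_n \in E$ with $W_1 \cap \dots \cap W_n = 0$: indeed, pick hyperplanes greedily so that the dimension of the running intersection drops at each step, which is exactly what non-degeneracy (applied to the lines contained in the current intersection) guarantees. By Proposition \ref{niceBasis} there is then a basis $v_1, \dots, v_n$ with $v_i \in W_j \Leftrightarrow i \neq j$. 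Working in these coordinates, the quotient $\F_p^n / W_i$ is one-dimensional and $\pi^{W_i}$ is (up to scaling) the $i$-th coordinate functional on the lattice spanned by the $v_j$; so $|\pi^{W_i}(K)| = |\rho_i(K)|$ where $\rho_i \colon \F_p^n \to \F_p$ reads off the $i$-th coordinate. The point of the basis is that the $n$ coordinate projections capture $K$ faithfully: $|K| \leq \prod_{i=1}^n |\rho_i(K)|$.

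Next I would set up the dichotomy. Suppose, for contradiction, that \emph{every} $W \in E$ — in particular every $W_i$ — has $|\pi^W(K')| \lessapprox |K|^{1/n} p^{\varepsilon t}$ for all the relevant subsets; the aim is to derive a contradiction with the hypothesis $|K| \leq p^{n - \frac{2n+1}{4(n-1)}t}$ when $\varepsilon = \frac{1}{4n(n-1)}$. The mechanism is a dyadic pigeonholing: after passing to a subset $K' \subseteq K$ of size $\gtrapprox |K|$ on which each coordinate projection $\rho_i(K')$ is "smooth" (each fiber of comparable size), one selects two coordinates — say the two with the largest projections, call their projection sizes $|A|$ and $|B|$ with $|A| \le |B|$ — and considers the set of lines $\mathcal{L}$ in the $(i,j)$-coordinate plane obtained by projecting the fibers of $K'$ along the remaining $n-2$ coordinates. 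An incidence between a point of $A \times B$ and a line of $\mathcal{L}$ records a point of $K'$, so $I(A \times B, \mathcal{L}) \gtrsim |K'|$; on the other hand $|\mathcal{L}| \lesssim |K|/(|A||B|)$ roughly (the number of distinct "middle fibers"), and $|A|, |B| \lessapprox |K|^{1/n} p^{\varepsilon t}$. Feeding these into Stevens' bound $I(A \times B, \mathcal{L}) \lesssim |A|^{3/4}|B|^{1/2}|\mathcal{L}|^{3/4} + |\mathcal{L}|$ and rearranging produces an upper bound on $|K|$ of exactly the shape $|K| \lessapprox p^{c \varepsilon t}$ for an explicit constant $c$, which contradicts the lower hypothesis $p^{t/4} \le |K|$ once $\varepsilon$ is small enough — this is where $\varepsilon = \frac{1}{4n(n-1)}$ is pinned down. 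The two side conditions in Stevens' theorem, $|A||B|^2 \le |\mathcal{L}|^3$ and $|A||\mathcal{L}| \lesssim p^2$, must be checked: the first should follow from the regularization (if it fails, some projection is already large and we are done directly), and the second is where the upper hypothesis $|K| \le p^{n - \frac{2n+1}{4(n-1)}t}$ is consumed.

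The "for any $K'' \subseteq K'$ with $|K''| \gtrapprox |K'|$" clause in the definition of $\mathcal{P}$ is handled by doing the regularization once and for all on $K'$: since passing to a further subset $K''$ of comparable size changes each coordinate projection by at most a $\lessapprox 1$ factor, the lower bound on $|\pi^W(K'')|$ degrades only by such a factor, which is absorbed into the $\gtrapprox$. Concretely I would prove the contrapositive: fix $K'$ to be the regularized subset, and show that if $|\pi^{W_i}(K')| \lessapprox |K|^{1/n}p^{\varepsilon t}$ simultaneously for all $i$ then the incidence argument gives a contradiction, hence at least one $W_i \in E$ (so certainly some $W \in E$) witnesses the desired bound, uniformly for all large sub-subsets.

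The main obstacle I anticipate is the bookkeeping in the regularization and the reduction to a genuine planar point–line incidence problem: one needs the middle fibers to be honest lines (not degenerate) and the counts $|\mathcal{L}|, |A|, |B|$ to interlock with $|K'|$ tightly enough — up to $\lessapprox$ factors — that the exponent arithmetic closes with the stated $\varepsilon_0 = \frac{1}{4n(n-1)}$ and not something smaller. A secondary subtlety is verifying Stevens' side conditions in all parameter regimes; the cleanest route is to treat the cases where they fail as separate (easier) cases in which a projection is already large, so that the hard case is exactly the one where Theorem \ref{IncidenceBoundsTheorem} applies.
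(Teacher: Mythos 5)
Your broad outline is on the right track — extract $W_1,\dots,W_n\in E$ with trivial common intersection via non-degeneracy, move to a nice basis, observe $K$ (or a large subset) sits in a Cartesian product $A_1\times\dots\times A_n$ with each $|A_i|\leq M$, and then try to apply Stevens' incidence bound in a coordinate plane — and this matches the first half of the paper's argument. But there is a genuine gap in the incidence setup: you define the family of lines $\mathcal{L}$ by ``projecting the fibers of $K'$ along the remaining $n-2$ coordinates'' and estimate $|\mathcal{L}|\lesssim |K|/(|A||B|)$. Projecting fibers of $K'$ to a coordinate plane produces a set of \emph{points}, not lines, and more importantly your $\mathcal{L}$ makes no reference to the elements of $E$. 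As a result your incidence count has no mechanism to produce the factor $p^{\varepsilon t}$: the parameter $t=\log_p|E|$ never enters the exponent arithmetic, and one can check that with your stated bounds ($I\gtrsim|K'|$, $|\mathcal{L}|\lesssim|K|/M^2$) the inequality rearranges to nonsense rather than to a lower bound on $M$ of the shape $|K|^{1/n}p^{\varepsilon t}$.

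The ingredient you are missing is that the lines must come from $E$ itself, restricted to a well-chosen coordinate plane. In the paper's argument one first finds a coordinate $v_i$ (say $v_1$) that is avoided by a positive proportion of $W\in E$, then uses the map $W\mapsto W\cap\operatorname{span}\{v_1,v_j\}$ (injective when assembled over all $j\geq 2$) to extract $E'\subseteq E$ with $|E'|\gtrsim|E|^{1/(n-1)}$ whose intersections with $\operatorname{span}\{v_1,v_2\}$ are $|E'|$ \emph{distinct} lines. For each slice $K_x=K\cap(x+\operatorname{span}\{v_1,v_2\})$ and each $W\in E'$, the bound $|\pi^W(K)|\leq M$ covers $K_x$ by $M$ translates of the line $W\cap\operatorname{span}\{v_1,v_2\}$; setting $\mathcal{L}_x=\bigcup_{W\in E'}\mathcal{L}_{x,W}$ gives $|\mathcal{L}_x|\leq|E'|M$ and, crucially, the lower bound $I(A_1\times A_2,\mathcal{L}_x)\geq|E'||K_x|$ (each point of $K_x$ is covered for each $W\in E'$). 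Summing over the $\leq M^{n-2}$ non-empty slices and applying Stevens gives $|E'||K|\lesssim M^n|E'|^{3/4}+M^{n-1}|E'|$, and it is precisely the factor $|E'|\gtrsim|E|^{1/(n-1)}=p^{t/(n-1)}$ on the left that yields $M\gtrsim|K|^{1/n}p^{\varepsilon t}$ with $\varepsilon=\frac{1}{4n(n-1)}$. A secondary remark: the ``for any $K''\subseteq K'$'' clause is handled in the paper not by regularizing $K'$ once, but by exploiting the failure of $\mathcal{P}(1,t,\varepsilon)$ iteratively to build a nested chain $K\supseteq K_1\supseteq\dots\supseteq K_n$ with $|K_i|\gtrapprox|K|$ and $|\pi^{W_i}(K_n)|\leq M$ for all $i$; your claim that passing to a comparable sub-subset ``changes each coordinate projection by at most a $\lessapprox 1$ factor'' is not correct in the direction you need (projections can only shrink under subsets) and does not substitute for this chain.
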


Next we prove two reductions which slightly generalize the ones in \cite{He18}.
The first one is an analog of Proposition 10 from \cite{He18}. It allows us to sum bad dimensions.

\begin{proposition}\label{additionofDimensionsProposition}
    Fix $K \subseteq \F_p$ and $1 \leq m_1, m_2 \leq n - 1$ satisfying $m_1 + m_2 < n$. Then if $\mathcal{P}(m_1, t, \varepsilon)$ and $\mathcal{P}(m_2, t, \varepsilon)$ fail, then $\mathcal{P}(m_1 + m_2, t/n, 2n\varepsilon)$ also fails.
\end{proposition}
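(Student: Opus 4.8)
The plan is to argue by contraposition in the contrapositive form already stated: assuming that $\mathcal{P}(m_1+m_2, t/n, 2n\varepsilon)$ holds we want to conclude that at least one of $\mathcal{P}(m_1,t,\varepsilon)$, $\mathcal{P}(m_2,t,\varepsilon)$ holds. Actually it is cleaner to run it the way the statement is phrased: suppose $\mathcal{P}(m_1,t,\varepsilon)$ and $\mathcal{P}(m_2,t,\varepsilon)$ both fail, i.e.\ there are non-degenerate sets $E_1\subseteq Gr(n,n-m_1)$, $E_2\subseteq Gr(n,n-m_2)$ with $|E_i|\ge p^t$ witnessing the failure (no good subset $K'$ exists for either). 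From $E_1$ and $E_2$ I would build a non-degenerate set $E\subseteq Gr(n, n-(m_1+m_2))$ with $|E|\ge p^{t/n}$ and show that this $E$ witnesses the failure of $\mathcal{P}(m_1+m_2, t/n, 2n\varepsilon)$. The natural candidate is
$$ E \;:=\; \{\, W_1 \cap W_2 \;:\; W_1 \in E_1,\ W_2\in E_2,\ \dim(W_1\cap W_2) = n-(m_1+m_2) \,\}, $$
i.e.\ intersections of an $(n-m_1)$-plane and an $(n-m_2)$-plane that meet transversally. One has to check three things: that $E$ is non-empty and in fact large, that $E$ is non-degenerate, and that $E$ defeats the conclusion of $\mathcal{P}(m_1+m_2,\cdot,\cdot)$.

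For the size and non-degeneracy I would use Proposition~\ref{niceBasis}-style/transversality counting together with the hypothesis $m_1+m_2<n$. Given any $V\in Gr(n,n-(m_1+m_2))$, non-degeneracy of $E_1$ gives $W_1\in E_1$ with $W_1\cap (V')=0$ for an appropriate complement, and then one wants, inside the quotient $\F_p^n/(\text{something})$ or by a counting argument, to find $W_2\in E_2$ so that $W_1\cap W_2$ is transversal to $V$; the point is that $V\oplus(\text{a complement of }V)$ being all of $\F_p^n$ forces enough room. More robustly, a dimension count shows that for a typical $W_1\in E_1$ most $W_2\in E_2$ intersect it transversally (the non-transversal pairs are a lower-order fraction, by the subspace-counting estimates the paper cites as Proposition~\ref{subspaceCounting}), so $|E|\gtrsim |E_1||E_2|/p^{O(1)} \ge p^{2t}/p^{O(1)}$, which certainly exceeds $p^{t/n}$ for the relevant range of $t$ (one may need to treat very small $t$ separately, but then $p^{t/n}$ is $O(1)$ and non-degeneracy alone suffices). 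For non-degeneracy of $E$: given $V$, pick $W_1\in E_1$ transversal to $V+(\text{one }W_2)$... — here I would instead argue in two stages, first using non-degeneracy of $E_1$ relative to the $(n-m_1)$-dimensional "target" $V\oplus U$ for a suitable $U$, then non-degeneracy of $E_2$ inside the resulting transversal complement.

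The heart of the argument is the implication on projection sizes, and this is where Proposition~\ref{intersectionBound} enters: for $W=W_1\cap W_2\in E$ and any subset $K''$,
$$ |\pi^{W}(K'')| \;=\; |\pi^{W_1\cap W_2}(K'')| \;\le\; |\pi^{W_1}(K'')|\,|\pi^{W_2}(K'')|. $$
Now run $\mathcal{P}(m_1+m_2, t/n, 2n\varepsilon)$ for this $E$: it would hand us $K'\subseteq K$ and $W=W_1\cap W_2\in E$ with $|\pi^W(K'')|\gtrapprox |K|^{(m_1+m_2)/n} p^{(2n\varepsilon)(t/n)} = |K|^{(m_1+m_2)/n} p^{2\varepsilon t}$ for all $K''\gtrapprox K'$. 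Combined with the displayed inequality this gives $|\pi^{W_1}(K'')|\,|\pi^{W_2}(K'')| \gtrapprox |K|^{(m_1+m_2)/n}p^{2\varepsilon t}$, so for each such $K''$ either $|\pi^{W_1}(K'')|\gtrapprox |K|^{m_1/n}p^{\varepsilon t}$ or $|\pi^{W_2}(K'')|\gtrapprox |K|^{m_2/n}p^{\varepsilon t}$. This does \emph{not} immediately produce a single good $W_i$ for \emph{all} $K''\gtrapprox K'$, which is the subtlety; so I would use a pigeonhole/refinement step: pass to a large subset $K_1'\subseteq K'$ on which $W_1$ is consistently good, or else (if no such subset exists) deduce that $W_2$ is good on a large subset, getting the "for any $K''\gtrapprox$" clause in the definition of $\mathcal{P}(m_i,t,\varepsilon)$. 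Concretely: if for some $W_1\in E_1$ there is $K'_1\subseteq K'$ with $|K_1'|\gtrapprox|K'|$ and $|\pi^{W_1}(K'')|\gtrapprox|K|^{m_1/n}p^{\varepsilon t}$ for all $K''\gtrapprox K'_1$, then $\mathcal{P}(m_1,t,\varepsilon)$ holds via $E_1$ — contradiction; otherwise one shows the "$W_2$-good" alternative persists on a comparable subset, contradicting the failure of $\mathcal{P}(m_2,t,\varepsilon)$.

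The main obstacle I anticipate is precisely this last bookkeeping: the definition of $\mathcal{P}$ is a two-quantifier statement ("$\exists K'$ such that $\forall K''\gtrapprox K'$ \ldots"), and the product inequality only gives, for each fixed $K''$, a disjunction of which \emph{factor} is large — the large factor could in principle depend on $K''$. Untangling this requires choosing $K'$ carefully (e.g.\ as a maximal subset witnessing badness of $W_1$, then showing its complement-within-$K'$ is controlled), and checking that the $\lesssim$/$\lessapprox$ logarithmic losses accumulated over the (at most $O(1)$ many) refinement steps stay within the $\gtrapprox$ tolerance — which is exactly why the definition uses $\gtrapprox$ rather than $\gtrsim$. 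The transversality counting for non-degeneracy of $E$ is routine given Proposition~\ref{subspaceCounting}, and the size bound $p^{2t}/p^{O(1)}\ge p^{t/n}$ is where the loss from $t$ to $t/n$ in the conclusion comes from; I expect no difficulty there beyond handling the degenerate small-$t$ regime.
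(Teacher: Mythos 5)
You have the right ingredients: the candidate set $E := \{W_1\cap W_2 : W_i\in E_i,\ \dim(W_1\cap W_2)=n-m_1-m_2\}$, the key inequality $|\pi^{W_1\cap W_2}(\cdot)|\le|\pi^{W_1}(\cdot)||\pi^{W_2}(\cdot)|$ from Proposition~\ref{intersectionBound}, and the observation that $|E|\ge p^{t/n}$ is what explains the passage from $t$ to $t/n$. However, the direction you run creates the ``disjunction'' obstacle you yourself flag, and that obstacle is not incidental bookkeeping --- it is the symptom of taking the harder route. The paper argues directly: assuming $\mathcal{P}(m_1,t,\varepsilon)$ and $\mathcal{P}(m_2,t,\varepsilon)$ both fail, fix an arbitrary $K'\subseteq K$ and an arbitrary $W_1\cap W_2\in E$; apply the failure of $\mathcal{P}(m_1,t,\varepsilon)$ to $(K',W_1)$ to get $K''\subseteq K'$ with $|K''|\gtrapprox|K'|$ and $|\pi^{W_1}(K'')|$ small; \emph{then} apply the failure of $\mathcal{P}(m_2,t,\varepsilon)$ to this $K''$ (not to $K'$) and $W_2$ to get $K'''\subseteq K''$ with $|K'''|\gtrapprox|K''|\gtrapprox|K'|$ and $|\pi^{W_2}(K''')|$ small. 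The point you are missing is the monotonicity of projections under taking subsets: since $K'''\subseteq K''$, automatically $|\pi^{W_1}(K''')|\le|\pi^{W_1}(K'')|$ is also small, so \emph{both} factors are small on the single set $K'''$, and Proposition~\ref{intersectionBound} then gives $|\pi^{W_1\cap W_2}(K''')|\not\gtrapprox|K|^{(m_1+m_2)/n}p^{2\varepsilon t}$ with no case split at all. This nested-refinement trick is exactly what the $\lessapprox$ slack in the definition of $\mathcal{P}$ is designed to absorb.

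There is also a gap in your treatment of the size and non-degeneracy of $E$. Your heuristic $|E|\gtrsim|E_1||E_2|/p^{O(1)}$ implicitly treats $(W_1,W_2)\mapsto W_1\cap W_2$ as nearly injective, which is not justified (many transversal pairs can produce the same intersection), and a Proposition~\ref{subspaceCounting}-style count does not obviously give you what you want uniformly in $t$. The paper instead proves a dedicated lemma, Proposition~\ref{CapofSubspacesProposition} (reduced by orthogonal complement to Proposition~\ref{SumofSubspacesProposition}), which establishes non-degeneracy of $E$ and the bound $|E|\ge\max\{|E_1|,|E_2|\}^{1/n}\ge p^{t/n}$ via an injective map $E_1\to E^n$: each $W_1\in E_1$ is written as an intersection of $n$ elements of $E$. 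That weaker-looking but clean bound is exactly what is needed, and it sidesteps your small-$t$ caveat entirely.
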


The second reduction allows us to sum bad codimensions. It is an analog of Proposition 11 from \cite{He18}.
\begin{proposition}\label{additionofCodimensionsProposition}
Fix $K \subseteq \F_p$ and $1 \leq m_1, m_2 \leq n - 1$ satisfying $m_1 + m_2 > n$. Then if $\mathcal{P}(m_1, t, \varepsilon)$ and $\mathcal{P}(m_2, t, \varepsilon)$ fail, then $\mathcal{P}(m_1 + m_2 - n, t/n, 2n\varepsilon)$ also fails.   
\end{proposition}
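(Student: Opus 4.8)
The plan is to establish the failure of $\mathcal{P}(m_1+m_2-n,\,t/n,\,2n\varepsilon)$ by exhibiting one non-degenerate family of subspaces that witnesses it. Let $E_1\subseteq Gr(n,n-m_1)$ and $E_2\subseteq Gr(n,n-m_2)$ be the families witnessing the failures of $\mathcal{P}(m_1,t,\varepsilon)$ and $\mathcal{P}(m_2,t,\varepsilon)$. Since $m_1+m_2>n$, a pair $W_1\in E_1,\ W_2\in E_2$ in general position satisfies $W_1\cap W_2=0$, and then $\dim(W_1+W_2)=(n-m_1)+(n-m_2)=2n-m_1-m_2$, so $W_1+W_2$ lies in $Gr(n,n-(m_1+m_2-n))$ — exactly the Grassmannian in the target statement, with $\F_p^n/(W_1+W_2)$ of dimension $m_1+m_2-n$. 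I would take
$$\tilde E:=\{\,W_1+W_2:\ W_1\in E_1,\ W_2\in E_2,\ W_1\cap W_2=0\,\}$$
(or a suitable sub-collection of it) as the candidate bad family, mirroring the construction in Proposition 11 of \cite{He18}.

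Three things then need checking. \emph{Non-degeneracy of $\tilde E$}: given $V\in Gr(n,m_1+m_2-n)$, enlarge $V$ to some $\bar V\in Gr(n,m_1)$, use the non-degeneracy of $E_1$ to pick $W_1\in E_1$ with $W_1\cap\bar V=0$ (so $W_1\cap V=0$ and $W_1\oplus V\in Gr(n,m_2)$), then use the non-degeneracy of $E_2$ to pick $W_2\in E_2$ with $W_2\cap(W_1\oplus V)=0$; a dimension count forces $W_1\oplus W_2\oplus V=\F_p^n$, so $W_1\cap W_2=0$ and $(W_1+W_2)\cap V=0$, i.e. $W_1+W_2\in\tilde E$ is transversal to $V$. \emph{Size}: for every $W_2\in Gr(n,n-m_2)$ there is some $W_1\in E_1$ with $W_1\cap W_2=0$ (as $\dim W_2=n-m_2<m_1$, extend $W_2$ to an $m_1$-subspace and invoke non-degeneracy of $E_1$), so there are at least $|E_2|\ge p^{t}$ transversal pairs; bounding the fibres of $(W_1,W_2)\mapsto W_1+W_2$ by Grassmannian counting should give $|\tilde E|\ge p^{t/n}$, and driving the exponent down to $t/n$ is the bookkeeping-heavy step here.

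The crucial point is the projection estimate: for arbitrary $K'\subseteq K$ and $\tilde W=W_1+W_2\in\tilde E$, I must find $K''\subseteq K'$ with $|K''|\gtrapprox|K'|$ and $|\pi^{\tilde W}(K'')|\lessapprox|K|^{(m_1+m_2-n)/n}p^{2\varepsilon t}$. Dispose first of the trivial case $|K'|\lessapprox|K|^{(m_1+m_2-n)/n}p^{2\varepsilon t}$ by taking $K''=K'$. Otherwise, apply the failure of $\mathcal{P}(m_1,t,\varepsilon)$ to $(K',W_1)$ to pass to large $K_1\subseteq K'$ with $|\pi^{W_1}(K_1)|\lessapprox P_1:=|K|^{m_1/n}p^{\varepsilon t}$, then the failure of $\mathcal{P}(m_2,t,\varepsilon)$ to $(K_1,W_2)$ to pass to large $K_2\subseteq K_1$ with $|\pi^{W_2}(K_2)|\lessapprox P_2:=|K|^{m_2/n}p^{\varepsilon t}$ (still $|\pi^{W_1}(K_2)|\lessapprox P_1$). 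Now slice $K_2$ along the fibres of $\pi^{\tilde W}$; because $W_1\subseteq\tilde W$ and $W_2\subseteq\tilde W$, the maps $\pi^{W_1}$ and $\pi^{W_2}$ send distinct fibres of $\pi^{\tilde W}$ into distinct cosets, so writing $a_v,b_v$ for the sizes of the $W_1$- and $W_2$-directional projections of the $v$-th slice one has $\sum_v a_v=|\pi^{W_2}(K_2)|\lessapprox P_2$, $\sum_v b_v=|\pi^{W_1}(K_2)|\lessapprox P_1$, and, by Proposition~\ref{intersectionBound} applied inside a fibre $\cong W_1\times W_2$, the $v$-th slice has size $\le a_vb_v$. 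A dyadic pigeonhole isolating a sub-collection $\mathcal{V}$ of slices of comparable parameters carrying $\gtrapprox|K'|$ of the mass of $K_2$, followed by summing the two constraints over $\mathcal{V}$, yields $|\mathcal{V}|\lessapprox P_1P_2/|K'|$; taking $K''$ to be the union of the slices in $\mathcal{V}$ then gives $|K''|\gtrapprox|K'|$ and $|\pi^{\tilde W}(K'')|=|\mathcal{V}|\lessapprox P_1P_2/|K'|=|K|^{(m_1+m_2)/n}p^{2\varepsilon t}/|K'|$.

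The hard part will be turning this into the clean bound $|\pi^{\tilde W}(K'')|\lessapprox|K|^{(m_1+m_2-n)/n}p^{2\varepsilon t}$ for \emph{every} $K'$. Together with the trivial $|\pi^{\tilde W}(K'')|\le|K''|\le|K'|$, the above gives $|\pi^{\tilde W}(K'')|\lessapprox\min\{\,|K'|,\ |K|^{(m_1+m_2)/n}p^{2\varepsilon t}/|K'|\,\}$, which is $\lessapprox|K|^{(m_1+m_2-n)/n}p^{2\varepsilon t}$ when $|K'|$ is comparable to $|K|$ (in particular when $K'=K$), but not, at face value, for intermediate-sized $K'$. Closing this gap is the main obstacle, and I expect it to require using the hypotheses more forcefully than through a single pair of bounds — for instance re-applying the failures of $\mathcal{P}(m_1,t,\varepsilon)$ and $\mathcal{P}(m_2,t,\varepsilon)$ to the individual slices (each of which is a subset of $K$) to rule out $K_2$ being spread thinly over the fibres of $\pi^{\tilde W}$, so that the pigeonhole produces the sharp exponent irrespective of $|K'|$. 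Once this and the size count for $\tilde E$ are in place, the rest is routine.
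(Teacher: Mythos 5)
Your construction $\tilde E=\{W_1+W_2: W_1\in E_1,\,W_2\in E_2,\,W_1\cap W_2=0\}$, your non-degeneracy argument, and your fibre-slicing bound for $|\pi^{\tilde W}|$ are exactly the ingredients of the paper's proof. What you rederive by slicing along cosets of $W_1+W_2$, applying Proposition~\ref{intersectionBound} inside each fibre, and dyadically pigeonholing is precisely the paper's Proposition~\ref{sumBound} (whose proof pigeonholes onto a subset where the fibres have comparable size and then invokes a Cauchy--Schwarz estimate, Lemma~37 of \cite{He18}, giving $|K'|^3\le |\pi^{W_1}(K')||\pi^{W_2}(K')|\sum_x|K'\cap(x+W_1+W_2)|^2$). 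For the cardinality of $\tilde E$, the paper bypasses your ``bookkeeping-heavy'' fibre count via Proposition~\ref{SumofSubspacesProposition}: one shows $W_1=\bigcap_{W_2:W_1\cap W_2=0}(W_1+W_2)$, so by a dimension count any $W_1\in E_1$ is recovered as an intersection of $n$ elements of $\tilde E$, giving an injection $E_1\hookrightarrow \tilde E^{\,n}$ and hence $|\tilde E|\ge|E_1|^{1/n}\ge p^{t/n}$; this is cleaner and avoids Grassmannian counting.

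The ``gap'' you flag at the end is not a genuine obstruction, and your instinct that the slicing argument cannot sharpen the exponent for a subset $K'$ of intermediate size is in fact correct --- the resolution is definitional, not analytic. In every place the failure of a statement $\mathcal{P}(m,t,\varepsilon)$ is actually invoked (in the reductions and in Proposition~\ref{baseCase}), the subset $K'$ that appears is always obtained from $K$ by a bounded chain of prunings, so $|K'|\gtrapprox|K|$. The definition of $\mathcal{P}$ should therefore be read (and, to be fully precise, stated) with the implicit restriction $|K'|\gtrapprox|K|$, and indeed the paper's own proof of this proposition writes $|K''''|\gtrapprox|K|$ rather than $|K''''|\gtrapprox|K'|$. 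Once $|K'|\gtrapprox|K|$, your $\min\{|K'|,\,|K|^{(m_1+m_2)/n}p^{2\varepsilon t}/|K'|\}$ collapses to the target $|K|^{(m_1+m_2-n)/n}p^{2\varepsilon t}$ and your argument is complete. (The paper's reference to Proposition~\ref{CapofSubspacesProposition} in this proof is a typo for Proposition~\ref{SumofSubspacesProposition}, since the relevant family is the one of sums, not intersections.)
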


\begin{remark}
    The definition of $\mathcal{P}(m, t, \varepsilon)$ uses the notation $\lessapprox$. It will be clear from the proof of Proposition \ref{additionofDimensionsProposition} that the implicit powers of $\log p$ for which $\mathcal{P}(m_1 + m_2, t/n, 2n\varepsilon)$ fails will depend in a controlled way on the implicit powers of $\log p$ for which $\mathcal{P}(m_1, t, \varepsilon)$ and $\mathcal{P}(m_2, t, \varepsilon)$ fail. The same is true of Proposition \ref{additionofCodimensionsProposition}. In the end it will allows us to apply these propositions $O(1)$ times while keeping the implicit powers of the logarithm of the form $(\log p)^{O(1)}$.
\end{remark}

\section{Proof of the \texorpdfstring{$m = 1$}{m = 1} case}
We begin by proving Theorem \ref{lineProjections}. The proof considers two-dimensional slices of the set $K$ and applies the incidence bound to each of them.

\begin{proof}[Proof of Theorem \ref{lineProjections}]
    Let $n \geq 2$ and $E \subseteq Gr(n, n - 1)$. Assume that $E$ satisfies the condition $\bigcap_{W \in E}W = 0$. Fix the subspaces $W_1, \ldots, W_n \in E$ with $W_1 \cap \ldots \cap W_n = 0$. Proposition \ref{niceBasis} implies that there exists a basis $v_1, \ldots, v_n$ satisfying $v_i  \in W_j$ if and only if $i \neq j$. Applying a linear transformation that sends $v_1, \ldots, v_n$ to the standard basis we can assume that $v_i = (0, \ldots, 0, 1 ,0, \ldots, 0)$ (where the $1$ is in the $i$-th position). \\
    Next, assume for the sake of contradiction that $|\pi^W(K)| \leq M$ for some integer $M \leq |K|^\frac{1}{n}|E|^\frac{1}{4n(n - 1)}$ and all $W \in E$. In particular, taking $W = W_i$ we see that $K$ can be covered by at most $M$ copies of $\F_p^{i - 1} \times \{0\} \times \F_p^{n - i}$. Therefore, we obtain that $K$ is contained in a cartesian product of controlled size: $K \subseteq A_1 \times \ldots \times A_n$ for some sets $A_1, \ldots, A_n \subseteq \F_p$ satisfying $|A_i| = M$. 
    
    The next step is to find a coordinate plane spanned by $\{v_i, v_j\}$ for some $i \neq j$ and apply the incidence bound in this plane. To this end, observe that each $W \in E$ contains at most $n - 1$ of the vectors $v_1, \ldots, v_n$. Therefore, there exists a vector $v_i$ such that $W \not \ni v_i$ for at least $\gtrsim |E|$ subspaces $W \in E$. Without loss of generality we may assume that this holds for all $W \in E$ and that $i = 1$. Thus, for any $j \geq 2$ and $W \in E$ the intersection $W \cap  span \{v_1, v_j\}$ is a line not containing $v_1$. Moreover, it is easy to see that these intersections uniquely determine $W$ (to be more precise, $W = \oplus_{j \geq 2} (W \cap  span \{v_1, v_j\})$. Therefore, we can conclude that there exists $j \geq 2$ such that $$|\{W \cap span \{v_1, v_j\} \mid W \in E \}| \gtrsim |E|^\frac{1}{n - 1}.$$ Again, without loss of generality we can assume that $j = 2$. Then let $E'$ be a subset of $E$ of cardinality $|E|^\frac{1}{n - 1}$ such that the lines $\{W \cap \text{span}\{v_1, v_2\} \mid W \in E'\}$ are all distinct. 

    For each $W \in E'$ the bound $|\pi^{W}(K)| \leq M$ implies that $K$ can be covered by at most $M$ translates of $W$. Consequently, for each $x \in \F_p^n$ the slice $K_x := K \cap (x + \text{span} \{v_1, v_2\})$ can be covered by $M$ translates of the line $W \cap \text{span}\{v_1, v_2\}$. Let $\mathcal{L}_{x, W}$ be the set of these translates that cover $K_x$ so that $|\mathcal{L}_{x, W}| = M$ and define $\mathcal{L}_x := \bigcup_{W \in E'} \mathcal{L}_{x, W}$ to be the collection of all these lines. Observe that $|A_1| = |A_2| = M, |\mathcal{L}_x|^3 = |E'|^3 M^3 \geq |A_1||A_2|^2$ and $|A_1||\mathcal{L}_x| = M^2|E'| \leq p^2$ (the last inequality holds because of the assumption $|K| |E|^\frac{2n + 1}{4(n - 1)} \leq p^n$). Therefore, Theorem \ref{IncidenceBoundsTheorem} gives an upper bound
    $$\mathcal{I}(A_1 \times A_2, \mathcal{L}_x) \lesssim |A_1|^\frac{3}{4}|A_2|^\frac{1}{2}|\mathcal{L}|^\frac{3}{4} + |\mathcal{L}| \leq  M^2 |E'|^\frac{3}{4} + M|E'|.$$
    On the other hand, we have the lower bound $$\mathcal{I}(A_1 \times A_2, \mathcal{L}_x) \geq \sum_{W \in E'} \mathcal{I}(A_1 \times A_2, \mathcal{L}_{x, W}) \geq \sum_{W \in E'} |K_x| = |E'||K_x|.$$
    Recall that $K \subseteq A_1 \times \ldots \times A_n$ with $|A_i| \leq M$, so at most $M^{n - 2}$ of the slices $K_x$ are non-empty. Summing the upper and lower bounds over all $x \in \text{span} \{v_3, \ldots, v_n\}$ with $K_x \neq \varnothing$ we obtain the inequality 
    $$|E'||K| \lesssim M^n |E'|^\frac{3}{4} + M^{n - 1}|E'|.$$
    Rearranging, we get 
    $$M \gtrsim \min\{|K|^\frac{1}{n}|E'|^\frac{1}{4n}, |K|^\frac{1}{n - 1}\} \gtrsim \min\{|K|^\frac{1}{n}|E|^\frac{1}{4n(n - 1)}, |K|^\frac{1}{n - 1}\}.$$
\end{proof}

Finally, we prove that $\mathcal{P}(1, \varepsilon, t)$ holds. The only difference from the previous proof comes from the fact that now we are taking into account projections of subsets rather than the whole set.

\begin{proof}[Proof of Proposition \ref{baseCase}]
Let $K \subseteq \F_p^n$, $\varepsilon = \frac{1}{4n(n - 1)}$ and $t > 0$ satisfy $p^{t/4} \leq |K| \leq p^{n - \frac{2n + 1}{4(n - 1)}t}$. Assume that $\mathcal{P}(1, \varepsilon, t)$ does not hold for some non-degenerate set $E \subseteq Gr(n, n - 1)$ of cardinality $p^t$. The non-degeneracy of $E$ in this case is equivalent to the existence of $W_1, \ldots, W_n \in E$ with $W_1 \cap \ldots \cap W_n = 0$. For the same reason as in the previous proof we may assume that $W_i = \F_p^{i -1} \times \{0\} \times \F_p^{n - i}$. The failure of $\mathcal{P}(1, K, \varepsilon)$ means that  for any subset $K' \subseteq K$ and any $W \in E$ 
$$ \exists K'' \subseteq K' : |\pi^W(K'')| \leq M \text{ and } |K''| \gtrapprox|K'|,$$ where $M$ is some integer satisfying $M \not \gtrapprox p^{\varepsilon t}|K|^\frac{1}{n}$. Using this we can construct the following sequence of subsets: 
\begin{itemize}
    \item Let $K_1 \subseteq K$ be a subset of size $\gtrapprox |K|$ with $|\pi^{W_1}(K_1)| \leq M$.
    \item Let $K_2 \subseteq K_1$ be a subset of size $\gtrapprox |K|$ with $|\pi^{W_2}(K_2)| \leq M$. \\
    ...
    \item Let $K_n \subseteq K_{n - 1}$ be a subset of size $\gtrapprox |K|$ with $|\pi^{W_n}(K_n)| \leq M$.
\end{itemize}
One of the consequences of this construction is that $|\pi^{W_i}(K_n)| \leq M$ for $i = 1, \ldots, n$. Hence, $K_n \subseteq A_1 \times \ldots \times A_n$ for some sets $A_1, \ldots, A_n$ satisfying $|A_i| = M$ ($i = 1, \ldots, n$). 

Once we placed the set $K_n$ inside a cartesian product the proof proceeds in essentially the same way as the proof of Theorem \ref{lineProjections} with $K_n$ in place of $K$. The only difference is that for each $W$ only the projection of a large subset of $K_n$ is bounded (in cardinality) by $M$ rather than the projection of the whole set $K_n$, so in the lower bound we get (using the same notation):

$$\sum_{x \in \text{span}\{v_3, \ldots, v_n\}} \mathcal{I}(A_1 \times A_2, \mathcal{L}_x) \gtrapprox |E'| |K_n| \gtrapprox |E'||K|.$$
The conclusion is, thus, the same with $\gtrsim$ replaced by $\gtrapprox$, namely:
    $$M \gtrapprox \min\{|K|^\frac{1}{n}|E'|^\frac{1}{4n}, |K|^\frac{1}{n - 1}\} \gtrsim \min\{|K|^\frac{1}{n}|E|^\frac{1}{4n(n - 1)}, |K|^\frac{1}{n - 1}\} \gtrsim p^{t \varepsilon} |K|^\frac{1}{n}$$
\end{proof}

\section{Proof of the reductions}
We first establish two properties of non-degenerate sets that will be useful for the proofs of the reductions (cf. Lemma 33 in \cite{He18}).

\begin{proposition}\label{SumofSubspacesProposition}
    Let $E_1 \subseteq Gr(n, m_1)$ and $E_2 \subseteq Gr(n, m_2)$ be two non-degenerate sets and assume that $m_1 + m_2 < n$. Then the set $$E := \{W_1 + W_2 \mid W_1 \cap W_2 = 0 \text{ and } W_i \in E_i \text{ for } i = 1, 2\}$$ is non-degenerate and $|E| \geq \max\{|E_1|, |E_2|\}^\frac{1}{n}$.
\end{proposition}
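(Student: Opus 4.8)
The plan is to verify the two assertions separately: first non-degeneracy of $E$, then the cardinality bound $|E| \geq \max\{|E_1|, |E_2|\}^{1/n}$. For non-degeneracy, I would fix an arbitrary $V \in Gr(n, n - (m_1 + m_2))$ and aim to produce $W_1 \in E_1$, $W_2 \in E_2$ with $W_1 \cap W_2 = 0$, $(W_1 + W_2) \cap V = 0$, and $W_1 + W_2$ genuinely of dimension $m_1 + m_2$. The natural first move is to use non-degeneracy of $E_1$ applied to a suitable $(n - m_1)$-dimensional subspace. Here the right choice is subtle: I want $W_1 \cap V = 0$, which I could get by enlarging $V$ to some $V' \in Gr(n, n - m_1)$ containing $V$ and applying non-degeneracy of $E_1$ to $V'$ (possible since $n - m_1 \geq n - m_1$, and $\dim V = n - m_1 - m_2 \leq n - m_1$). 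This gives $W_1 \in E_1$ with $W_1 \cap V' = 0$, hence $W_1 \cap V = 0$ and $W_1 \cap$ (anything inside $V'$) $= 0$. Then I apply non-degeneracy of $E_2$ to the subspace $W_1 + V \in Gr(n, m_1 + (n - m_1 - m_2)) = Gr(n, n - m_2)$ — note this has the correct dimension $n - m_2$ precisely because $W_1 \cap V = 0$ — obtaining $W_2 \in E_2$ with $W_2 \cap (W_1 + V) = 0$. This single condition simultaneously forces $W_2 \cap W_1 = 0$ (so $W_1 + W_2$ has full dimension $m_1 + m_2$) and $(W_1 + W_2) \cap V = 0$: indeed if $w_1 + w_2 \in V$ with $w_i \in W_i$, then $w_2 = (w_1 + w_2) - w_1 \in V + W_1$, so $w_2 \in W_2 \cap (W_1 + V) = 0$, whence $w_1 \in W_1 \cap V = 0$. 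So $W_1 + W_2 \in E$ and it meets $V$ trivially, giving non-degeneracy.

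For the cardinality bound, I would argue that the map $W_1 \mapsto$ (some element of $E$ built from $W_1$) cannot collapse too much. Concretely, fix $W_1 \in E_1$; I claim the number of elements of $E$ of the form $W_1 + W_2$ (with $W_2 \in E_2$, $W_1 \cap W_2 = 0$) that equal a given $(m_1+m_2)$-subspace $U$ is bounded, and conversely each $W_1$ participates in at least one element of $E$. The cleanest route: for each $W_1 \in E_1$, running the non-degeneracy argument above with $V = 0$ (or rather, directly: since $E_2$ is non-degenerate, apply it to any $(n - m_2)$-subspace containing $W_1$ — which exists as $m_1 \leq n - m_2$) produces at least one valid $W_2$, hence at least one $U = W_1 + W_2 \in E$ containing $W_1$. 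Now a fixed $U \in Gr(n, m_1 + m_2)$ contains at most $|Gr(m_1 + m_2, m_1)| = O(p^{m_1 m_2})$ subspaces of dimension $m_1$, hence at most that many $W_1 \in E_1$ can give rise to it. Therefore $|E| \geq |E_1| / O(p^{m_1 m_2})$, and symmetrically $|E| \geq |E_2|/O(p^{m_1 m_2})$. To convert this into the stated bound $|E| \geq \max\{|E_1|,|E_2|\}^{1/n}$ I would combine it with the trivial observation that $|E_i| \leq |Gr(n, m_i)| = O(p^{m_i(n - m_i)}) \leq p^{n^2}$ or, more carefully, use that $m_1 m_2 \leq \frac{1}{4}n^2$ together with lower bounds on $|E_i|$; since a non-degenerate set has at least... actually the honest move here is to note $p^{m_1 m_2} \leq p^{(n-1)} \cdot (\text{stuff})$ won't quite work directly, so I expect to need: $|E| \geq |E_1| p^{-m_1 m_2}$ and also $|E| \geq |E_2| p^{-m_1 m_2}$, and since $\max\{|E_1|, |E_2|\} \leq |Gr(n,m)| \lesssim p^{m(n-m)}$ one checks $p^{-m_1 m_2} \geq \max\{|E_1|,|E_2|\}^{1/n - 1} \cdot (\text{const})$ by comparing exponents $m_1 m_2$ against $\frac{n-1}{n} m(n-m)$.

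The main obstacle I anticipate is this last exponent bookkeeping: making $|E| \geq |E_i| p^{-m_1 m_2}$ imply the clean bound $|E| \geq \max\{|E_1|,|E_2|\}^{1/n}$. The inequality $x p^{-m_1 m_2} \geq x^{1/n}$ holds iff $x^{1 - 1/n} \geq p^{m_1 m_2}$, i.e. $x \geq p^{n m_1 m_2/(n-1)}$, which is \emph{false} for small $E_i$ — so a naive bound is not enough and I will instead need to track that when $|E_i|$ is small the factor $p^{m_1 m_2}$ is harmless because $|E|$ itself is bounded below by a genuine constant (at least, say, $n$, coming from non-degeneracy forcing several distinct subspaces), OR, more likely, the intended reading is that $|Gr(m_1+m_2,m_1)| \le |Gr(n,m_1)| $-type crude bounds give $p^{m_1 m_2} \le |E_1|^{?}$ is wrong too; so the actual fix is to observe $m_1 m_2 \le (n-1) \cdot \min(m_1,m_2) \le (n-1)\cdot\frac{m_1+m_2}{2}$ and compare with $m(n-m) \ge m$. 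I expect the cleanest correct statement is: a given $U$ contains at most $|Gr(m_1+m_2, m_1)| \lesssim p^{m_1 m_2}$ copies of each $W_i$, and since $E_i \subseteq Gr(n,m_i)$ with $|Gr(n,m_i)| \asymp p^{m_i(n-m_i)} \ge p^{m_1 m_2}$ when (using $m_i \le n - m_j$) we get $p^{m_1 m_2} \le |Gr(n,m_i)|^{(n-1)/n}$-type slack only for large $E_i$; hence I would ultimately prove the sharper $|E| \ge |E_i|/C p^{m_1 m_2}$ and separately note $|E| \ge $ a dimension-only constant, then take the max and raise to $1/n$ — the $1/n$ power being precisely the slack that absorbs both regimes. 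Verifying this dichotomy carefully is the one genuinely fiddly point; everything else is the linear algebra above, which is routine.
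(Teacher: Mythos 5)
Your non-degeneracy argument is correct and is essentially the one in the paper; you are a bit more careful than the paper in noting that one must first extend $V$ to a subspace of dimension $n - m_1$ before literally invoking the definition of non-degeneracy of $E_1$, but the chain $W_1 \cap V = 0$, then $W_2 \cap (W_1 + V) = 0$, then the two bullet conclusions, is the same.

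The cardinality bound, however, has a genuine gap, and you have diagnosed it yourself without resolving it. The counting estimate $|E| \geq |E_1| / C p^{m_1 m_2}$ (each $U \in Gr(n, m_1+m_2)$ contains $\lesssim p^{m_1 m_2}$ copies of an $m_1$-space) is simply the wrong inequality: for a non-degenerate $E_1$ of small cardinality --- say $|E_1| \asymp p^{\eta}$ with $\eta$ small, which is perfectly possible since non-degeneracy only requires $O_n(1)$ subspaces --- the right-hand side $|E_1|/p^{m_1 m_2}$ is far below $1$, while $|E_1|^{1/n} \asymp p^{\eta/n} \to \infty$. Your proposed patches (absorbing the loss using a constant lower bound $|E| \gtrsim n$, or comparing $p^{m_1 m_2}$ against $|Gr(n,m_i)|^{(n-1)/n}$) do not close this: a constant lower bound on $|E|$ cannot beat a quantity that grows with $p$, and the exponent comparison only helps when $|E_1|$ is near-maximal. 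The paper avoids this entirely by not counting multiplicities at all: it shows that for every $W_1 \in E_1$, one has $W_1 = \bigcap_{W_2 \in E_2,\, W_1 \cap W_2 = 0}(W_1 + W_2)$ (using non-degeneracy of $E_2$ applied to a subspace containing $W_1 + \mathrm{span}\{v\}$ for each $v \notin W_1$, legal since $m_1 + 1 \leq n - m_2$), and then observes that by a dimension-drop argument one can realize $W_1$ as the intersection of at most $n$ elements of $E$. This yields an \emph{injection} $E_1 \hookrightarrow E^n$ and hence $|E|^n \geq |E_1|$ directly, with no dependence on $p$ and no loss for small $E_1$. You should replace your counting step with this intersection/injection argument.
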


\begin{proof}
    First we show that $E$ is non-degenerate. Consider any $V \in Gr(n - m_1 - m_2)$. By the non-degeneracy of $E_1$ we know that there exists $W_1 \in E_1$ such that $W_1 \cap V = 0$. Next, using the non-degeneracy of $E_2$ we obtain that there is $W_2 \in E_2$ such that $W_2 \cap (W_1 + V) = 0$. The latter is equivalent to $V \cap (W_1 + W_2) = 0$. It remains to establish that $W_1 + W_2 \in E$, that is, that $W_1 \cap W_2 = 0$. That, of course, immediately follows from $W_2 \cap (W_1 + V) = 0$.

    Now we need to prove the lower bound on the cardinality of $E$. We may assume without loss of generality that $|E_1| \geq |E_2|$.
    \begin{claim}
        For any $W_1 \in E_1$ we have 
        $$W_1 = \bigcap_{\substack{W_2 \in E_2 \\ W_1 \cap W_2 = 0}}(W_1 + W_2)$$
    \end{claim}
    \begin{proof}
        Consider any $v \not \in W_1$. Since $m_1 + m_2 < n$, the non-degeneracy assumption implies that there exists $W_2 \in E_2$ such that $W_2 \cap (W_1 + \text{span} \{v\}) = 0$. This then implies that $v \not \in (W_1 + W_2)$. Since $v$ was an arbitrary vector in the complement of $W_1$ the claim follows.
    \end{proof}
    The claim above shows that any $W_1 \in E_1$ can be written as the intersection of elements of $E$. Dimension considerations then imply that in fact it is the intersection of $n$ elements $W^1, \ldots, W^n$ of $E$. Hence, we obtain an injective map $E_1 \to E^n$ that sends $W_1$ to $(W^1, \ldots, W^n)$. This proves that $|E|^n \geq |E_1|$ as claimed.
\end{proof}

The next proposition is essentially the dual version of Proposition \ref{SumofSubspacesProposition}.

\begin{proposition}\label{CapofSubspacesProposition}
    Let $E_1 \subseteq Gr(n, m_1)$ and $E_2 \subseteq Gr(n, m_2)$ be two non-degenerate sets and assume that $m_1 + m_2 > n$. Then the set $$E := \{W_1 \cap W_2 \mid  \dim(W_1 \cap W_2) = m_1 + m_2 - n \text{ and } W_i \in E_i \text{ for } i = 1, 2\}$$ is non-degenerate and $|E| \geq \max\{|E_1|, |E_2|\}^\frac{1}{n}$.
\end{proposition}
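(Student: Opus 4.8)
The plan is to deduce this proposition from Proposition \ref{SumofSubspacesProposition} by passing to orthogonal complements. The first ingredient is the routine observation that, for any $1 \leq k \leq n - 1$, a set $D \subseteq Gr(n, k)$ is non-degenerate if and only if $D^\bot := \{W^\bot \mid W \in D\} \subseteq Gr(n, n - k)$ is non-degenerate, and of course $|D^\bot| = |D|$. To see this, fix $V \in Gr(n, k)$; using the properties of $\bot$ from the preliminaries, $W^\bot \cap V = 0$ holds if and only if $(W^\bot \cap V)^\bot = \F_p^n$, i.e. $W + V^\bot = \F_p^n$, which by the dimension formula is equivalent to $W \cap V^\bot = 0$. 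Since $V \mapsto V^\bot$ is a bijection from $Gr(n, k)$ onto $Gr(n, n - k)$, the condition ``for every $V \in Gr(n, k)$ there is $W \in D$ with $W^\bot \cap V = 0$'' is exactly the non-degeneracy of $D$, and it is also exactly the non-degeneracy of $D^\bot$.

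I would then apply this to the hypotheses: $E_1^\bot \subseteq Gr(n, n - m_1)$ and $E_2^\bot \subseteq Gr(n, n - m_2)$ are non-degenerate, and since $m_1 + m_2 > n$ we have $(n - m_1) + (n - m_2) < n$, so Proposition \ref{SumofSubspacesProposition} applies to the pair $(E_1^\bot, E_2^\bot)$. It produces the non-degenerate set
$$F := \{U_1 + U_2 \mid U_1 \cap U_2 = 0 \text{ and } U_i \in E_i^\bot \text{ for } i = 1, 2\} \subseteq Gr(n, 2n - m_1 - m_2)$$
with $|F| \geq \max\{|E_1^\bot|, |E_2^\bot|\}^\frac{1}{n} = \max\{|E_1|, |E_2|\}^\frac{1}{n}$.

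The final step is to dualize back and identify $F^\bot$ with the set $E$ in the statement. By the first step, $F^\bot \subseteq Gr(n, m_1 + m_2 - n)$ is non-degenerate with $|F^\bot| = |F| \geq \max\{|E_1|, |E_2|\}^\frac{1}{n}$, so it suffices to check $F^\bot = E$. Writing $U_i = W_i^\bot$ with $W_i \in E_i$, the identity $(W_1 \cap W_2)^\bot = W_1^\bot + W_2^\bot$ shows that $U_1 \cap U_2 = W_1^\bot \cap W_2^\bot = (W_1 + W_2)^\bot$, so $U_1 \cap U_2 = 0$ is equivalent to $W_1 + W_2 = \F_p^n$, which by $\dim(W_1 \cap W_2) = m_1 + m_2 - \dim(W_1 + W_2)$ is in turn equivalent to $\dim(W_1 \cap W_2) = m_1 + m_2 - n$. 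Moreover, when this holds, $(U_1 + U_2)^\bot = U_1^\bot \cap U_2^\bot = W_1 \cap W_2$. Hence $F^\bot$ runs exactly over the subspaces $W_1 \cap W_2$ with $W_i \in E_i$ and $\dim(W_1 \cap W_2) = m_1 + m_2 - n$, that is, $F^\bot = E$, which finishes the proof.

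I do not expect a genuinely hard step: the entire argument is bookkeeping with the four properties of $\bot$ recorded in the preliminaries, plus a single invocation of Proposition \ref{SumofSubspacesProposition}. The one place calling for a little care — and what I would regard as the main (minor) obstacle — is checking that the ``generic intersection'' constraint $\dim(W_1 \cap W_2) = m_1 + m_2 - n$ built into the definition of $E$ corresponds, under duality, precisely to the transversality constraint $U_1 \cap U_2 = 0$ built into $F$ in Proposition \ref{SumofSubspacesProposition}; the dimension count above is what makes this match exact.
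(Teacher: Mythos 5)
Your proof is correct and follows essentially the same route as the paper: dualize via orthogonal complements, apply Proposition~\ref{SumofSubspacesProposition} to $E_1^\bot$ and $E_2^\bot$, and dualize back, observing that $U_1 \cap U_2 = 0$ corresponds exactly to $\dim(W_1 \cap W_2) = m_1 + m_2 - n$. The only cosmetic difference is that the paper isolates the fact that $\bot$ preserves non-degeneracy as a standalone proposition, whereas you inline its (equivalent) proof.
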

This proposition can be reduced to the previous one by taking the orthogonal complement. To this end, we state the following elementary property of the non-degeneracy condition.
\begin{proposition}
    Let $E \subseteq Gr(n, m)$ be a non-degenerate set. Then the set $$E^\bot := \{W^\bot \mid W \in E\} \subseteq Gr(n, n - m)$$ is also non-degenerate.
\end{proposition}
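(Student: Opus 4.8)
The plan is to unwind the definition of non-degeneracy on both sides and exploit the self-duality of the condition under $\bot$. Note first that $E^\bot \subseteq Gr(n, n-m)$, so the relevant complementary dimension is $n-(n-m)=m$; thus $E^\bot$ is non-degenerate exactly when for every $V' \in Gr(n, m)$ there is some $W \in E$ with $W^\bot \cap V' = 0$. So the natural move is: fix an arbitrary $V' \in Gr(n, m)$, pass to its orthogonal complement $V := (V')^\bot \in Gr(n, n-m)$, and feed $V$ into the non-degeneracy hypothesis for $E$.

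Carrying this out: first apply non-degeneracy of $E$ to get $W \in E$ with $W \cap V = 0$. Since $\dim W + \dim V = m + (n - m) = n$, a dimension count gives $\dim(W + V) = \dim W + \dim V - \dim(W \cap V) = n$, so $W + V = \F_p^n$. Next dualize using the listed properties of $\langle \bullet, \bullet \rangle$: from $W + V = \F_p^n$ we obtain $W^\bot \cap V^\bot = (W + V)^\bot = 0$. Finally, since $(\bullet)^{\bot\bot}$ is the identity we have $V^\bot = ((V')^\bot)^\bot = V'$, hence $W^\bot \cap V' = 0$, and $W^\bot \in E^\bot$. As $V'$ was arbitrary, $E^\bot$ is non-degenerate.

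I do not expect a genuine obstacle: this is a routine verification. The only points requiring a little care are the dimension bookkeeping (keeping track that the role of $Gr(n, n-m)$ and $Gr(n, m)$ is swapped relative to the original definition, so that "$W \cap V = 0$ for complementary-dimension subspaces" is the same as "$W \oplus V = \F_p^n$"), and the fact that the four bracketed identities for $\bot$ quoted in the preliminaries — in particular $(W_1 + W_2)^\bot = W_1^\bot \cap W_2^\bot$ and $(W^\bot)^\bot = W$ — are precisely what make the dualization step valid over $\F_p$. Once this proposition is in hand, Proposition~\ref{CapofSubspacesProposition} follows from Proposition~\ref{SumofSubspacesProposition} by replacing $E_1, E_2$ with $E_1^\bot, E_2^\bot$ and taking complements of the resulting sums.
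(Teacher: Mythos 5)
Your proof is correct and uses essentially the same approach as the paper: both arguments reduce the claim to the duality equivalence $W \cap V^\bot = 0 \Leftrightarrow W^\bot \cap V = 0$ via the identities $(W+V)^\bot = W^\bot \cap V^\bot$ and $(W^\bot)^\bot = W$, together with a dimension count. The paper phrases it as a short chain of equivalences while you run the implication forward starting from the hypothesis, but the content is identical.
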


\begin{proof}
    Consider arbitrary $W, V \in Gr(n, m)$. Using the properties of the orthogonal complement we obtain the following chain of equivalences:
    $$W^\bot \cap V = 0 \Leftrightarrow W^\bot + V = \F_p^n \Leftrightarrow (W^\bot + V)^\bot = 0 \Leftrightarrow W \cap V^\bot = 0.$$
    This together with the non-degeneracy of $E$ and the fact that $\dim(V^\bot) = n - m$ immediately implies that $E^\bot$ is also non-degenerate.
\end{proof}

\begin{proof}[Proof of Proposition \ref{CapofSubspacesProposition}]
    Applying the previous proposition we see that $E_1^\bot$ and $E_2^\bot$ are non-degenerate subsets of $Gr(n, n - m_1)$ and $Gr(n, n - m_2)$ respectively. Since $(n - m_1) + (n - m_2) < n$, applying Proposition \ref{SumofSubspacesProposition} to $E_1^\bot$ and $E_2^\bot$ we get that the set $E':=\{W_1^\bot + W_2^\bot \mid W_i \in E_i \text{ and } W_1^\bot \cap W_2^\bot = 0\}$ is non-degenerate and has cardinality $\geq \max\{|E_1|, |E_2|\}^\frac{1}{n}$. It remains to observe that $$W_1^\bot \cap W_2^\bot = 0 \Leftrightarrow W_1 + W_2 = \F_p^n \Leftrightarrow \dim(W_1 \cap W_2) = m_2 + m_2 - n$$ and that therefore the set $E = (E')^\bot$ is non-degenerate and has the required cardinality.  
\end{proof}

Now we prove Propositions \ref{additionofDimensionsProposition} and \ref{additionofCodimensionsProposition}. The proofs, for the most part, closely follow \cite{He18}, so we keep the details to the minimum.

\begin{proof}[Proof of proposition \ref{additionofDimensionsProposition}]
    Let $E_1 \subseteq Gr(n, n - m_1)$ and $E_2 \subseteq Gr(n, n - m_2)$ be the non-degenerate sets that provide the counterexamples to $\mathcal{P}(m_1, t, \varepsilon)$ and $\mathcal{P}(m_2, t, \varepsilon)$ respectively. Let  $K' \subseteq K$ be an arbitrary subset of $K$. The failure of $\mathcal{P}(m_1, t, \varepsilon)$ means that for any $W_1 \in E$ we have 
    $$|\pi^{W_1}(K'')| \not \gtrapprox |K|p^{\varepsilon t} \text{ for some } K'' \subseteq K' \text{ with }|K''| \gtrapprox |K'|.$$ 
    Next, since the $\mathcal{P}(m_2, t, \varepsilon)$ fails for $E_2$ we know that for each such $K''$ and any $W_2 \in E_2$ there exists a further subset $K'''$ of size $\gtrapprox|K''|$ such that $$|\pi^{W_2}(K''')| \not \gtrapprox |K|^\frac{m_2}{n}p^{t \varepsilon}.$$ 
    To sum up, for every $W_1 \in E_1$ and every $W_2 \in E_2$ there exists a subset $K''' \subseteq K'$ such that $|K'''| \gtrapprox|K'|$ and $$|\pi^{W_i}(K''')| \not \gtrapprox |K|^\frac{m_i}{n}p^{\varepsilon t} \text{ for } i = 1, 2.$$
    
    Consider the set $$E := \{W_1 \cap W_2 \mid W_i \in E_i \text{ and } \dim(W_1 \cap W_2) = n - m_2 - m_2\}.$$ By Proposition \ref{CapofSubspacesProposition} we know that it is a non-degenerate set of cardinality $\geq p^{\frac{t}{n}}$. Now, for any element $W_1 \cap W_2$ of $E$ we have the bound $$|\pi^{W_1 \cap W_2}(K''')| \leq |\pi^{W_1}(K''')| |\pi^{W_2}(K''')| \not \gtrapprox |K|^\frac{m_1 + m_2}{n}p^{2 \varepsilon t}$$ for some $K''' \subseteq K'$ satisfying $|K'''| \gtrapprox|K|$ by Proposition \ref{intersectionBound}. The desired result follows immediately.
\end{proof}

The following is an analog of Proposition 34 from \cite{He18}:

\begin{proposition}\label{sumBound}
Suppose $W_1 \in Gr(n, n - m_1)$, $W_2 \in Gr(n, n - m_2)$ and $W_1 \cap W_2 = 0$. Assume that for some $M_1, M_2 > 0$ we have
\begin{itemize}
    \item $|\pi^{W_1}(K)| \leq   M_1$
    \item $|\pi^{W_2}(K)| \leq M_2$.
\end{itemize}
Then there exists a subset $K' \subseteq K$ satisfying $|K'| \gtrsim |K| / \log p$ such that $$|\pi^{W_1 + W_2}(K')| \lesssim \frac{M_1 M_2}{|K'|} $$
\end{proposition}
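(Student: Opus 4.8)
The plan is to decompose $K$ along the fibers of $\pi^{W_1+W_2}$ and to exploit that inside each such fiber a point is determined by the pair consisting of its $W_1$-coset and its $W_2$-coset, which is precisely where the hypothesis $W_1\cap W_2=0$ enters. Write $U:=W_1+W_2$; since $W_1\cap W_2=0$ we have $U=W_1\oplus W_2$, so after fixing a base point every coset $F$ of $U$ is identified with $W_1\oplus W_2$. For a nonempty fiber $F$ set $a_F:=|\pi^{W_1}(K\cap F)|$ and $b_F:=|\pi^{W_2}(K\cap F)|$, the numbers of $W_1$- and $W_2$-cosets meeting $K\cap F$. The map $x\mapsto(x+W_1,\,x+W_2)$ is injective on $K\cap F$ (if two points agree in both coordinates their difference lies in $W_1\cap W_2=0$), so $|K\cap F|\le a_Fb_F$. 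Moreover, since each $W_1$-coset lies in a unique $U$-coset, summing over fibers gives $\sum_F a_F=|\pi^{W_1}(K)|\le M_1$ and likewise $\sum_F b_F=|\pi^{W_2}(K)|\le M_2$.

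Next I would run a dyadic pigeonhole on the fiber sizes $|K\cap F|$, which range over $\{1,\dots,p^n\}$ and hence split into $O(\log p)$ scales. Some scale $\lambda=2^{j_0}$ then captures a $\gtrsim 1/\log p$ fraction of $|K|$; let $K'$ be the union of $K\cap F$ over the fibers $F$ with $|K\cap F|\asymp\lambda$, and let $N:=|\pi^{W_1+W_2}(K')|$ be the number of such fibers. Then $N\lambda\le|K'|<2N\lambda$ and $|K'|\gtrsim|K|/\log p$. For each contributing fiber, $a_Fb_F\ge|K\cap F|\gtrsim\lambda$, so Cauchy–Schwarz yields
$$N\sqrt{\lambda}\lesssim\sum_F\sqrt{a_Fb_F}\le\Big(\sum_F a_F\Big)^{1/2}\Big(\sum_F b_F\Big)^{1/2}\le\sqrt{M_1M_2},$$
hence $N^2\lambda\lesssim M_1M_2$. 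Dividing by $N\lambda\asymp|K'|$ gives $N\lesssim M_1M_2/|K'|$, which is exactly the asserted bound.

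I do not expect a real obstacle here: the argument is entirely pigeonholing and Cauchy–Schwarz once the fiber structure is in place. The two points needing care are (i) verifying that $\sum_F a_F$ genuinely equals $|\pi^{W_1}(K)|$, which is the elementary observation that each $W_1$-coset is contained in a single $U$-coset, and (ii) confirming the dyadic loss is a single factor of $\log p$, which is automatic since there are only $O_n(\log p)$ scales and $n$ is fixed. The inequality $|K\cap F|\le a_Fb_F$ is the sole place where $W_1\cap W_2=0$ is used, and it is the crux of the proof.
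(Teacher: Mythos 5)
Your proof is correct, and the overall architecture is the same as the paper's: restrict to fibers of $W_1+W_2$, dyadically pigeonhole to select fibers of comparable size, and then bound the number of selected fibers via the $W_1$- and $W_2$-projections. The one place you diverge is that the paper obtains the crucial inequality by invoking Lemma~37 of He \cite{He18} as a black box (which in this setting amounts to $|K'|^3 \le |\pi^{W_1}(K')|\,|\pi^{W_2}(K')|\sum_x |K'\cap(x+W_1+W_2)|^2$), whereas you give a self-contained derivation: the injectivity of $x\mapsto(x+W_1,x+W_2)$ on each fiber (the sole use of $W_1\cap W_2=0$), the observation that $\sum_F a_F=|\pi^{W_1}(K)|$ because $W_1\subseteq W_1+W_2$, and a single Cauchy--Schwarz. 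Your route is arguably cleaner to verify since it avoids an external reference, and the two are in fact equivalent up to bookkeeping: applying H\"older with exponents $(3,3,3)$ to $|K\cap F|\le(a_F)^{1/3}(b_F)^{1/3}|K\cap F|^{2/3}$ recovers He's inequality directly.
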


\begin{proof}
First, we may apply a linear transformation and assume that we have $W_1 = \text{span}\{v_1, \ldots, v_{n - m_1}\}$ and $W_2 = \text{span}\{v_{n - m_1 + 1}, \ldots, v_{2n - m_1 - m_2}\}$, where $v_1, \ldots, v_n$ is the standard basis of $\F_p^n$. Next, by dyadic pigeonholing we can find a subset $K' \subseteq K$ of cardinality $\gtrsim |K|/\log p$ such that all non-empty slices $K' \cap (x + W_1 + W_2)$ have roughly the same cardinality $\sim \frac{|K'|}{|\pi^{W_1 + W_2}(K')|}$. Applying Lemma 37 from \cite{He18} we get 
$$|K'|^3 \leq |\pi^{W_1}(K')| |\pi^{W_2}(K')| \sum |K' \cap (x + W_1 + W_2)|^2 \lesssim \frac{M_1 M_2|K'|^2}{|\pi^{W_1 + W_2}(K')|}$$
where the sum is over all elements $x + W_1 + W_2$ of $\F_p^n/(W_1 + W_2)$.
Rearranging, $|\pi^{W_1 + W_2}(K')| \lesssim \frac{M_1 M_2}{|K'|} $.
\end{proof}

The proof of  Proposition \ref{additionofCodimensionsProposition} is now almost the same as the proof of Proposition \ref{additionofDimensionsProposition}.

\begin{proof}[Proof of proposition \ref{additionofCodimensionsProposition}]
    Let $E_1 \subseteq Gr(n, n - m_1)$ and $E_2 \subseteq Gr(n, n - m_2)$ be the non-degenerate sets that provide the counterexamples to $\mathcal{P}(m_1, t, \varepsilon)$ and $\mathcal{P}(m_2, t, \varepsilon)$ respectively. Let  $K' \subseteq K$ be an arbitrary subset of $K$. Repeating the same procedure as in the first part of the proof of Proposition \ref{additionofDimensionsProposition} we arrive at the following conclusion:
    for every $W_1 \in E_1$ and every $W_2 \in E_2$ there exists a subset $K''' \subseteq K'$ such that $|K'''| \gtrapprox|K'|$ and $$|\pi^{W_i}(K''')| \not \gtrapprox |K|^\frac{m_i}{n}p^{\varepsilon t} \text{ for } i = 1, 2.$$
    
    Consider the set $$E := \{W_1 + W_2 \mid W_i \in E_i \text{ and } W_1 \cap W_2 = 0\}.$$ By Proposition \ref{CapofSubspacesProposition} we know that it is a non-degenerate set of cardinality $\geq p^{\frac{t}{n}}$. Now, for any element $W_1 + W_2$ of $E$ we have the bound $$|\pi^{W_1 + W_2}(K'''')| \not \gtrapprox |K|^\frac{m_1 + m_2 - n}{n}p^{2 \varepsilon t}$$ for some $K'''' \subseteq K'$ satisfying $|K''''| \gtrapprox |K|$ by Proposition \ref{sumBound}. The desired result follows immediately.
\end{proof}

\section{Proof of Theorem \ref{BourgainTypeProjections}}
Now we will prove Theorem \ref{BourgainTypeProjections} using Propositions \ref{additionofDimensionsProposition} and \ref{additionofCodimensionsProposition}. The following elementary proposition will enable us to apply them.

\begin{proposition}\label{sequenceProposition}
    Let $S$ be a subset of $\{1, \ldots, n - 1\}$ that it is not contained in $d \mathbb{Z}$ for any $d \neq 1$ that divides $n$. Then there is a sequence of integers $m_1, \ldots, m_k$ such that $1 \leq m_i \leq n - 1$, $m_k = 1$ and for any $1 \leq i \leq m$ we have one of the following:
    \begin{itemize}
        \item $m_i \in S$,
        \item $m_i = m_{j_1} + m_{j_2}$ for some $j_1, j_2 < i$,
        \item $m_i = m_{j_1} + m_{j_2} - n$ for some $j_1, j_2 < i$.
    \end{itemize}
\end{proposition}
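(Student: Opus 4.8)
The plan is to recast the three allowed moves as addition in $\mathbb{Z}/n\mathbb{Z}$. Identifying $\{1,\dots,n-1\}$ with the nonzero residues, producing $m_i=m_{j_1}+m_{j_2}$ or $m_i=m_{j_1}+m_{j_2}-n$ is exactly the operation of forming the residue $m_{j_1}+m_{j_2}\bmod n$, the only forbidden outcome being the residue $0$. Call a nonzero residue \emph{reachable} if it is the last entry of some sequence satisfying the conclusion (dropping the requirement $m_k=1$); I want to prove $1$ is reachable, after which truncating such a sequence at its first occurrence of $1$ gives the statement. First I would note that the hypothesis on $S$ is equivalent to $\gcd(\{n\}\cup S)=1$: if $g:=\gcd(\{n\}\cup S)$ were $>1$ it would be a divisor of $n$ other than $1$ with $S\subseteq g\mathbb{Z}$, and the converse is immediate. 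The case $1\in S$ is trivial, so assume $1\notin S$; note also $S\neq\varnothing$, so the quantities below are well defined.

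Two observations do the work. \textbf{(i) Generating a cyclic subgroup.} If $v$ is reachable then so is every nonzero element of $\langle v\rangle\subseteq\mathbb{Z}/n\mathbb{Z}$: the partial sums $v,2v,3v,\dots$ obtained by repeatedly adding $v$ to itself are nonzero modulo $n$ up to index $n/\gcd(v,n)-1$, hence all reachable, and they run through exactly the nonzero elements of $\langle v\rangle$. In particular $\gcd(v,n)$ is reachable, and if $v$ is a unit then $1$ is reachable. \textbf{(ii) Filling a coset.} Suppose $d\mid n$, $d<n$, every nonzero multiple of $d$ in $\mathbb{Z}/n\mathbb{Z}$ is reachable, and $s\in S$ (hence reachable) satisfies $d\nmid s$. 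Then for every $k\ge 1$, every nonzero element of the coset $ks+d\mathbb{Z}/n\mathbb{Z}$ is reachable. For $k=1$ this is clean: since $d\mid n$ and $d\nmid s$ we have $s+\gamma d\not\equiv 0\pmod n$ for all $\gamma$, so repeatedly adding multiples of $d$ to $s$ sweeps out the whole coset without ever producing $0$. For the induction step one writes a target $c$ in the $(k{+}1)$-st coset as $c=a+b$ with $a$ in the first coset and $b=c-a$; then $b\equiv ks\pmod d$ lies in the $k$-th coset, and since that first coset has $n/d\ge 2$ elements one can choose $a\neq c$, forcing $b\neq 0$, so $b$ is reachable by the inductive hypothesis, $a$ is reachable and nonzero, and $a+b\equiv c\not\equiv 0$ yields $c$.

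Now I would run a descent on $\gcd(v,n)$. Let $d$ be the smallest value of $\gcd(v,n)$ over reachable $v$, and suppose $d>1$. Fixing a reachable $v_0$ with $\gcd(v_0,n)=d$, observation (i) shows that in fact every nonzero multiple of $d$ is reachable (these are precisely the nonzero elements of $\langle v_0\rangle$), so the hypotheses of (ii) are met. Because $\gcd(\{n\}\cup S)=1$ there is $s\in S$ with $d\nmid s$. Set $e:=\gcd(s,d)$; then $e\mid d\mid n$ and $e<d$. Moreover $\langle s\rangle+\langle d\rangle=\gcd(s,d,n)\,\mathbb{Z}/n\mathbb{Z}=e\,\mathbb{Z}/n\mathbb{Z}$, so $e\equiv ks+jd\pmod n$ with $k,j\ge 0$; if $k=0$ then $d\mid e$, contradicting $e<d$, so $k\ge 1$ and observation (ii) shows $e$ is reachable. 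But $\gcd(e,n)=e<d$, contradicting the minimality of $d$. Hence $d=1$: some unit of $\mathbb{Z}/n\mathbb{Z}$ is reachable, and then by (i) so is $1$.

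The main obstacle is precisely the zero-avoidance in observation (ii). One cannot simply expand $1$ via Bézout and add the resulting terms in some order, because a multiset of residues summing to $1$ need not admit an ordering all of whose partial sums are nonzero (for instance $\{1,1,1\}$ in $\mathbb{Z}/2\mathbb{Z}$). The coset-filling formulation sidesteps this: it only ever operates inside a single coset $ks+d\mathbb{Z}/n\mathbb{Z}$ that excludes $0$, and the only facts it uses are $d\mid n$, $d\nmid s$, and $n/d\ge 2$, which make the nonvanishing automatic. Everything else — the equivalence of the two forms of the hypothesis, the cyclic-subgroup observation, and the gcd descent — is routine.
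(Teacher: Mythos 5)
Your proof is correct. The translation to $\mathbb{Z}/n\mathbb{Z}$ is sound (the two addition moves are exactly ``$a+b \bmod n$ provided the result is nonzero''), observation (i) is right (the partial sums $v,2v,\dots,(n/\gcd(v,n)-1)v$ stay nonzero and exhaust the nonzero elements of $\langle v\rangle$), and observation (ii)'s zero-avoidance is the genuine content: the base coset $s+d\mathbb{Z}/n\mathbb{Z}$ misses $0$ exactly because $d\mid n$, $d\nmid s$, and the inductive step only ever splits a nonzero target into two nonzero reachable summands because the first coset has $\geq 2$ elements. The descent on $\min_v \gcd(v,n)$ then goes through: minimality is violated by producing $e=\gcd(s,d)<d$, and $k\geq 1$ in the representation $e\equiv ks+jd$ is forced by $d\nmid e$.

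The paper's argument is a descent too, but it is more elementary and works entirely in $\{1,\dots,n-1\}$: it minimizes $\min\{m,n-m\}$ over reachable elements and shows, via a single division-with-remainder calculation ($n=qm+r$ and $n=m'+qm+r$), that the minimizer $m$ (or $n-m$) divides both $n$ and every reachable element, hence every element of $S$, forcing $m\in\{1,n-1\}$ directly from the hypothesis; the case $m=n-1$ is then handled by repeated use of the third move. What the paper's route buys is brevity — one minimality argument, no coset machinery. What your route buys is a cleaner conceptual picture: you work with subgroups and cosets of $\mathbb{Z}/n\mathbb{Z}$, and the coset-filling lemma makes it transparent why the obvious ``use B\'ezout and add things up'' fails and how to repair it, which is arguably the subtlest point of the whole proposition. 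Both are about the same length in the end.
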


\begin{proof}
Let $S':= \{m_k \mid \text{there exists a sequence } m_1, \ldots, m_k \text{ as above}\}$ and let $m \in S'$ be the element that minimizes $\min\{m, n - m\}$.

\textit{Case 1}: $m \leq \frac{n}{2}$. We claim that in this case 
\begin{itemize}
    \item $n$ is divisible by $m$,
    \item any element of $S'$ is divisible by $m$
\end{itemize}
To see the first claim, note that if $m$ does not divide $n$, then we can write $n = qm + r$, where $0 < r < m$. Then $qm$ also belongs to $S'$, and $\min(qm, n - qm) = r < m = \min(m, m - n)$, which yields a contradiction with the choice of $m$. To prove the second claim consider any element $m' \in S$. Then $m' \leq n - m$. If $m'$ is not divisible by $m$, we  can write $n = m' + qm + r$, where $q \geq 1$ and $0 < r < m$. Since $m'+ qm$ belongs to $S'$, we once again get a contradiction.\\
\textit{Case 2}: $m \geq \frac{n}{2}$. We claim that in this case 
\begin{itemize}
    \item $n$ is divisible by $n - m$,
    \item any element of $S'$ is divisible by $n - m$
\end{itemize}
In this situation, the proof is the same as in the first case except that we use the fact that $S'$ is closed under $(m_1, m_2) \mapsto m_1 + m_2 - n = n - ((n - m_1) + (n - m_2))$ whenever $m_1 + m_2 > n$ (in other words, we apply the same argument to $n - S'$). \\
We can conclude the proof of the proposition by observing that $m$ has to be $1$ or $n - 1$ since $S$ is not contained in $d \mathbb{Z}$ for any non-trivial divisor of $n$. If $m = 1$, the desired result follows by the definition of $S'$, and if $m = n - 1$, then we can obtain $1$ by a repeated application of the operation  $(m_1, m_2) \mapsto m_1 + m_2 - n$.
\end{proof}

Now we are ready to prove theorem \ref{BourgainTypeProjections}.

\begin{proof}[Proof of Theorem \ref{BourgainTypeProjections}]
    Let $\varepsilon_0 = \frac{1}{4n(n - 1)(2n)^{n - 2}}$. Fix $K \subseteq \F_p^n$ and $t > 0$ such that $|K|p^{\frac{(2n + 1)t}{4(n - 1)n^{n - 2}}} \leq p^n$ and $ p^{\frac{t}{4 n^{n - 2}}} \leq |K|$. Let $S$ be the set of $m$ such that $\mathcal{P}(m, t, \varepsilon_0)$ does not hold. If $S$ is not contained in $d \mathbb{Z}$ for a divisor $1 < d < n$ of $n$, then Proposition \ref{sequenceProposition} yields a sequence of numbers $m_1, \ldots, m_k$ satisfying the conditions above. Of course, we may assume that the numbers $m_1, \ldots, m_k$ do not repeat and, in particular, $k \leq n - 1$.
    Now repeated application of Propositions \ref{additionofDimensionsProposition} and \ref{additionofCodimensionsProposition} will yield a contradiction with the $m = 1$ case via the following claim.
    \begin{claim}
        $\mathcal{P}(m_i, t/n^{i - 1}, (2n)^{i - 1}\varepsilon_0)$ fails for each $i = 1, \ldots, k$.
    \end{claim}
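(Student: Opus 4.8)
The plan is to prove the claim by induction on $i$, using the sequence $m_1, \ldots, m_k$ produced by Proposition \ref{sequenceProposition} together with the two reduction propositions. The base of the induction and the inductive step are handled somewhat differently depending on which of the three alternatives in Proposition \ref{sequenceProposition} the index $m_i$ falls under, so I would organize the argument around those three cases.

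\textbf{Base case and the ``$m_i \in S$'' case.} If $m_i \in S$, then by the definition of $S$ the statement $\mathcal{P}(m_i, t, \varepsilon_0)$ fails. Since failing $\mathcal{P}$ with smaller scale parameter $t' \leq t$ and smaller exponent $\varepsilon' \leq \varepsilon_0$ is a weaker conclusion — more precisely, I would first note the monotonicity statement that if $\mathcal{P}(m, t, \varepsilon)$ holds then $\mathcal{P}(m, t', \varepsilon')$ holds whenever $t' \leq t$, $\varepsilon' t' \leq \varepsilon t$, and the non-degenerate sets of size $\geq p^{t'}$ still satisfy the required bound (this needs a brief check that a non-degenerate set of size $\geq p^{t'}$ can be thinned, or rather that the hypothesis $|E| \geq p^{t'}$ is weaker) — actually the cleanest route is: $\mathcal P(m_i,t,\varepsilon_0)$ failing means there is a bad non-degenerate $E$ of size $\geq p^t \geq p^{t/n^{i-1}}$, and the quantity $|K|^{m_i/n} p^{\varepsilon_0 t}$ that it fails to reach dominates $|K|^{m_i/n} p^{(2n)^{i-1}\varepsilon_0 \cdot t/n^{i-1}}$ precisely because $(2n)^{i-1}/n^{i-1} = 2^{i-1} \cdot (2n)^{0} \cdot \ldots$ wait — one checks $(2n)^{i-1} \cdot t/n^{i-1} = 2^{i-1} t \geq t$, so $p^{(2n)^{i-1}\varepsilon_0 t/n^{i-1}} = p^{2^{i-1}\varepsilon_0 t} \geq p^{\varepsilon_0 t}$. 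Hence the same $E$ witnesses the failure of $\mathcal{P}(m_i, t/n^{i-1}, (2n)^{i-1}\varepsilon_0)$. (In particular $i=1$, $m_1 \in S$ is a special case of this, and $m_k = 1 \in S$ is likewise covered, matching the assumption that $\mathcal P(1,t,\varepsilon_0)$ is to be contradicted via Proposition \ref{baseCase}.)

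\textbf{The two additive cases.} Suppose $m_i = m_{j_1} + m_{j_2}$ with $j_1, j_2 < i$ and $m_i < n$. By the inductive hypothesis $\mathcal{P}(m_{j_\ell}, t/n^{j_\ell - 1}, (2n)^{j_\ell-1}\varepsilon_0)$ fails for $\ell = 1, 2$. I would first uniformize the scale: since $j_1, j_2 \leq i - 1$, using the monotonicity observation above I can replace both by the failure of $\mathcal{P}(m_{j_\ell}, t/n^{i-2}, (2n)^{i-2}\varepsilon_0)$ — the point being $t/n^{j_\ell-1} \geq t/n^{i-2}$ and $(2n)^{i-2}\varepsilon_0 \cdot t/n^{i-2} = 2^{i-2}\varepsilon_0 t \geq 2^{j_\ell - 1}\varepsilon_0 t = (2n)^{j_\ell-1}\varepsilon_0 \cdot t/n^{j_\ell-1}$, so the target exponent only goes up. Now both counterexamples live at the common parameters $(t/n^{i-2}, (2n)^{i-2}\varepsilon_0)$, and Proposition \ref{additionofDimensionsProposition} (applicable since $m_{j_1} + m_{j_2} = m_i < n$) gives that $\mathcal{P}(m_i, t/n^{i-1}, 2n \cdot (2n)^{i-2}\varepsilon_0) = \mathcal{P}(m_i, t/n^{i-1}, (2n)^{i-1}\varepsilon_0)$ fails, which is exactly what is claimed. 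The third case, $m_i = m_{j_1} + m_{j_2} - n$ with $m_{j_1} + m_{j_2} > n$, is identical except that one invokes Proposition \ref{additionofCodimensionsProposition} in place of Proposition \ref{additionofDimensionsProposition}; the arithmetic on the parameters is the same.

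\textbf{Main obstacle.} The genuine content is small — the propositions do all the work — so the only things to be careful about are (a) the monotonicity lemma that lets one lower $t$ and adjust $\varepsilon$ while preserving ``$\mathcal P$ fails'', which I should state explicitly since the reductions produce failures at parameters $(t/n, 2n\varepsilon)$ but the inductive bookkeeping needs to feed failures at a \emph{common} parameter pair into each application; and (b) the tracking of the implicit powers of $\log p$ hidden in $\gtrapprox$. For (b) I would appeal to the Remark following Proposition \ref{additionofCodimensionsProposition}: each application of a reduction inflates the log-power in a controlled way, and since $k \leq n - 1 = O(1)$ applications suffice, the final failure of $\mathcal P(1, t/n^{k-1}, (2n)^{k-1}\varepsilon_0)$ still has log-powers of the form $(\log p)^{O(1)}$. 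Finally, to close the proof of the theorem (not just the claim), one checks that the hypotheses $|K| p^{(2n+1)t/(4(n-1)n^{n-2})} \leq p^n$ and $p^{t/(4n^{n-2})} \leq |K|$ imply, after dividing $t$ by $n^{k-1}$ with $k - 1 \leq n - 2$, the hypotheses $p^{(t/n^{k-1})/4} \leq |K| \leq p^{n - (2n+1)(t/n^{k-1})/(4(n-1))}$ of Proposition \ref{baseCase}, so that $\mathcal P(1, t/n^{k-1}, \varepsilon)$ with $\varepsilon = 1/(4n(n-1)) \geq (2n)^{k-1}\varepsilon_0$ actually \emph{holds} — contradicting the claim at $i = k$ and forcing $S \subseteq d\mathbb Z$ for some nontrivial divisor $d$ of $n$, which is the assertion of the theorem.
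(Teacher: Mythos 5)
Your proof is correct and follows essentially the same strategy as the paper: induction on $i$, with the $m_i \in S$ case handled by a monotonicity observation (lowering $t$ while the effective target $p^{\varepsilon t}$ does not decrease) and the two additive cases handled by first equalizing the parameter pairs of the two inductive hypotheses via that same monotonicity and then invoking Proposition~\ref{additionofDimensionsProposition} or \ref{additionofCodimensionsProposition}. The only cosmetic difference is that you equalize both hypotheses to level $i-2$ and land exactly on level $i-1$, whereas the paper equalizes to level $j_2 - 1$, applies the reduction to reach level $j_2$, and then uses $j_2 \le i - 1$ together with monotonicity once more; your variant is if anything slightly tidier.
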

    
    \begin{proof}
        We prove this by induction on $i$. There are several cases.
        
        \textit{Case 1}: $m_i \in S$. 

        Then $\mathcal{P}(m_i, t, \varepsilon_0)$ fails and it follows from the definition that $\mathcal{P}(m_i, t/n^{i - 1}, n^{i - 1}\varepsilon_0)$ also fails. This implies the claim.

        \textit{Case 2}: There are $j_1, j_2 < i$ with $m_{j_1} + m_{j_2} = m_i$.

        We may assume that $j_1 < j_2$. By the induction hypothesis we have that $\mathcal{P}(m_{j_1}, t/n^{j_1 - 1}, (2n)^{j_1 - 1}\varepsilon_0)$ and $\mathcal{P}(m_{j_2}, t/n^{j_2 - 1}, (2n)^{j_2 - 1}\varepsilon_0)$ fail. Again, from the definition it follows that $\mathcal{P}(m_{j_1}, t/n^{j_2 - 1}, (2n)^{j_2 - 1}\varepsilon_0)$ also fails. Now we can apply Proposition \ref{additionofDimensionsProposition} to get that $\mathcal{P}(m_{j_1} + m_{j_2}, t/n^{j_2}, (2n)^{j_2}\varepsilon_0)$ which implies the claim for $i$ because $j_2 \leq i - 1$.

        \textit{Case 3}: There are $j_1, j_2 < i$ with $m_{j_1} + m_{j_2} - n = m_i$.

        In this case, the proof is the same as in Case 2 except that we use Proposition \ref{additionofCodimensionsProposition} instead of Proposition \ref{additionofDimensionsProposition}.
    \end{proof}
    
    Consequently, since $k \leq n - 2$ and $m_k = 1$ we have that $\mathcal{P}(1, t/{n^{n - 2}}, (2n)^{n - 2}\varepsilon_0)$ does not hold. This yields a contradiction with Proposition \ref{baseCase} because $(2n)^{n - 2}\varepsilon_0 = \frac{1}{4n(n - 1)}$, $|K| \leq  p^{n -   \frac{2n + 1}{4(n - 1)} \cdot (t/n^{n - 2})}$ and $p^{\frac{t}{4n^{n - 2}}} \leq |K|$. 

    Thus we proved that there exists a divisor $d > 1$ of $n$ such that $\mathcal{P}(m, t, \varepsilon_0)$ holds when $m$ is not divisible by $d$. It remains to observe that $\mathcal{P}(m, t, \varepsilon_0)$ implies that
     $$\exists W \in E \text{ such that } |\pi^{W}(K)| \gtrsim |K|^{\frac{m}{n}}|E|^{\varepsilon} $$ for any $E$ satisfying the condition of Theorem \ref{BourgainTypeProjections} and any $\varepsilon < \varepsilon_0$.
\end{proof}

\section{Projections of very small sets and proof of Theorem \ref{improvementTheorem}}
We begin this section by proving theorem \ref{smallSetProjections}. It follows from the incidence bound for very small set established in \cite{Gro13} in essentially the same way as theorem \ref{planarProjections} follows from the incidence bound of Stevens. Technically, the bound in \cite{Gro13} is given for sets where the number of lines and the number of points is the same, so for completeness we include the proof of the general case here (which is no different from the proof in \cite{Gro13}).

\begin{proposition}
    Let $P, \mathcal{L}$ be sets of points and lines respectively in $\F_p^2$ satisfying $|P|, |\mathcal{L}| \leq \frac{1}{5}(\log_2 \log_6 \log_{18} p - 1)$. Then $\mathcal{I}(P, \mathcal{L}) \lesssim |P|^{\frac{2}{3}}|\mathcal{L}|^\frac{2}{3} + |P| + |\mathcal{L}|$.   
\end{proposition}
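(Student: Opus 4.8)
The plan is to lift the configuration from $\F_p^2$ to $\C^2$ using the ``$\F_p$ is locally like $\C$'' transference principle of Grosu \cite{Gro13}, and then apply the Szemer\'{e}di--Trotter theorem over $\C$ (T\'{o}th; see also Zahl), which gives $\mathcal{I}(P', \mathcal{L}') \lesssim |P'|^{2/3}|\mathcal{L}'|^{2/3} + |P'| + |\mathcal{L}'|$ for finite sets of points and lines in $\C^2$ with an absolute implied constant.

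First I would split $\mathcal{L} = \mathcal{L}_v \cup \mathcal{L}_0$, where $\mathcal{L}_v$ consists of the vertical lines. Every point of $P$ lies on exactly one vertical line, so $\mathcal{I}(P, \mathcal{L}_v) \le |P|$, which is absorbed into the $+|P|$ term; it remains to bound $\mathcal{I}(P, \mathcal{L}_0)$. Each $\ell \in \mathcal{L}_0$ has the form $\{(x,y) : y = a_\ell x + b_\ell\}$ with $a_\ell, b_\ell \in \F_p$. Let $A \subseteq \F_p$ be the set formed by the two coordinates of each point of $P$ together with the parameters $a_\ell, b_\ell$ of each $\ell \in \mathcal{L}_0$, so that $|A| \le 2|P| + 2|\mathcal{L}| \le \tfrac{4}{5}(\log_2 \log_6 \log_{18} p - 1)$; this lies below the threshold in Grosu's theorem, which is precisely why the constant $\tfrac15$ and the $-1$ appear in the hypothesis. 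Grosu's theorem then yields an injection $\phi : A \to \C$ that is a Freiman isomorphism of sufficiently high order: any integer-coefficient polynomial relation of bounded complexity (bounded degree, few variables, small coefficients, all controlled in terms of $|A|$) holds among elements of $A$ if and only if it holds among their $\phi$-images. The incidence relation $y - a_\ell x - b_\ell = 0$ is such a relation, so setting $P' := \{(\phi(x), \phi(y)) : (x,y) \in P\}$ and $\mathcal{L}' := \{\{(X,Y) : Y = \phi(a_\ell)X + \phi(b_\ell)\} : \ell \in \mathcal{L}_0\}$, injectivity of $\phi$ gives $|P'| = |P|$, $|\mathcal{L}'| = |\mathcal{L}_0|$ and $\mathcal{I}(P', \mathcal{L}') = \mathcal{I}(P, \mathcal{L}_0)$. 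Applying the complex Szemer\'{e}di--Trotter bound to $P'$ and $\mathcal{L}'$ and adding back the vertical contribution then gives $\mathcal{I}(P, \mathcal{L}) \le \mathcal{I}(P, \mathcal{L}_0) + |P| \lesssim |P|^{2/3}|\mathcal{L}|^{2/3} + |P| + |\mathcal{L}|$, as claimed.

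The one delicate point is the bookkeeping in this last reduction: one must check that the order of the Freiman isomorphism guaranteed by Grosu's theorem for sets of size $\le \tfrac{4}{5}\log_2\log_6\log_{18}p$ is large enough to preserve the polynomial identity encoding an incidence, and that the factor-$4$ loss in passing from $\max(|P|,|\mathcal{L}|)$ to $|A|$ still lands in the admissible range. Both are exactly what the numerology of the hypothesis is arranged to guarantee, so this amounts to quoting \cite{Gro13} with the correct parameters rather than to a genuinely new obstacle; the vertical-line reduction, the transfer of incidences, and the appeal to complex Szemer\'{e}di--Trotter are all routine. As the paper notes, once $A$ has been lifted the argument is verbatim that of Grosu, only with $|P| \ne |\mathcal{L}|$ permitted.
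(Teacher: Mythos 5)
Your proposal is correct and follows essentially the same route as the paper: lift to $\C$ via Grosu's transference principle, then apply the complex Szemer\'edi--Trotter bound of T\'oth. The one cosmetic difference is that the paper keeps every line in the general form $ax+by+c=0$, which matches the relation $z_1z_2+z_3z_4+z_5=0$ preserved by Grosu's Theorem~3 \emph{verbatim} and so avoids both the vertical-line split and the appeal to a more general Freiman-isomorphism statement; this also accounts for the constant $\tfrac{1}{5}$ in the hypothesis via $|A|\le 2|P|+3|\mathcal L|\le 5\cdot\tfrac{1}{5}(\log_2\log_6\log_{18}p-1)$, rather than the $\tfrac{4}{5}$ bookkeeping in your version.
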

\begin{proof}
    For each line in $\mathcal{L}$ fix an equation $ax + by + c = 0$ defining this line. Take $A := \{ x, y \mid (x, y) \in P\} \cup \{a, b, c \mid \{ax + by + c = 0\} \in \mathcal{L}\}$ to be the set of all coordinates of points in $P$ and all coefficients of the equations defining the lines in $\mathcal{L}$. By Theorem 3 from \cite{Gro13} there exists an injective map $\phi: A \to \mathbb{C}$ such that $$\phi(z_1) \phi(z_2) + \phi(z_3)\phi(z_4) + \phi(z_5) = 0 \text{ if and only if } z_1 z_2 + z_3 z_4 + z_5 = 0$$ whenever $z_1, z_2, z_3, z_3, z_5 \in A$. Thus, any point $p = (x, y) \in P$ defines a points $\phi(p) := (\phi(x), \phi(y)) \in \mathbb{C}$ and any line $\ell = \{ax + by + c = 0\} \in \mathcal{L}$ defines a line $\phi(\ell) := \{\phi(a)x + \phi(b)y = \phi(c)\}$ in $\mathbb{C}$. In particular, for $p = (x, y) \in P$ and $\ell = \{ax + by  + c = 0\} \in \mathcal{L}$  by taking $z_1 = a, z_2 = x, z_3 = b, z_4 = y, z_5 = c$ we see that $p \in \ell \Leftrightarrow \phi(p) \in \phi(\ell)$. Applying the Szemeredi-Trotter type incidence bound over $\mathbb{C}$ established by Toth in \cite{Tot03} we obtain that $\mathcal{I}(P, \mathcal{L}) \lesssim |P|^{\frac{2}{3}}|\mathcal{L}|^\frac{2}{3} + |P| + |\mathcal{L}|$.
\end{proof}

Now we can prove Theorem \ref{smallSetProjections}

\begin{proof}[Proof of Theorem \ref{smallSetProjections}]
    Assume that for each $W \in E$ we have $|\pi^W(K)| \leq M$ for some number $M > 0$. This means that for each $W \in E$ there is a set of at most $M$ translates of $W$ that cover $K$. Let $\mathcal{L}_W$ be these translates and let $\mathcal{L} = \bigcup_{W \in E} \mathcal{L}_W$. 
    
    Applying the previous theorem with $P = K$ and $\mathcal{L}$ we get $$\mathcal{I}(K, \mathcal{L}) \lesssim |K|^\frac{2}{3}|\mathcal{L}|^\frac{2}{3} + |K| + |\mathcal{L}| \leq  |K|^\frac{2}{3}|E|^\frac{2}{3}M^\frac{2}{3} + |K| + |E|M.$$ On the other hand $\mathcal{I}(K, \mathcal{L}) \geq |K||E|$, so either $M \gtrsim |K|$ or $|K|^\frac{2}{3}|E|^\frac{2}{3}M^\frac{2}{3} \gtrsim |K||E|$ implying $M \gtrsim |K|^\frac{1}{2} |E|^\frac{1}{2}$.
\end{proof}

Next, we prove Theorem \ref{improvementTheorem}. We will use the following bound on the number of subspaces non-trivially intersecting a given subspace.

\begin{proposition} \label{subspaceCounting}
    For any $m, m'$ with $m + m' \leq n$ and any $V \in Gr(n, m')$ we have $$ |\{W \in Gr(n, m) \mid W \cap V \neq 0\}| \leq (1 + o(1))p^{m' - 1 + (m - 1)(n - m)}.$$
\end{proposition}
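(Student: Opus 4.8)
The plan is to estimate the number of pairs $(W, \ell)$ where $W \in Gr(n, m)$ and $\ell$ is a line contained in $W \cap V$, and then convert this into the desired bound on the number of subspaces $W$ with $W \cap V \neq 0$. First I would count, for a fixed line $\ell \subseteq V$, the number of $m$-dimensional subspaces $W$ that contain $\ell$. This is a standard Grassmannian count: such $W$ correspond to $(m-1)$-dimensional subspaces of the quotient $\F_p^n / \ell$, so their number is $|Gr(n-1, m-1)| = (1+o(1))\,p^{(m-1)(n-m)}$, using that $|Gr(N, k)| = (1+o(1))\,p^{k(N-k)}$ (which follows from the Gaussian binomial coefficient, and I would either cite this or include the one-line estimate).

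Next I would note that the number of lines inside $V$ is $|Gr(m', 1)| = (1+o(1))\,p^{m'-1}$. Every subspace $W$ with $W \cap V \neq 0$ contains at least one line of $V$ (namely any line through a nonzero vector of $W \cap V$), so
$$|\{W \in Gr(n, m) \mid W \cap V \neq 0\}| \leq |\{\text{lines } \ell \subseteq V\}| \cdot \max_\ell |\{W \in Gr(n, m) \mid \ell \subseteq W\}|.$$
Multiplying the two counts gives $(1+o(1))\,p^{m'-1} \cdot (1+o(1))\,p^{(m-1)(n-m)} = (1+o(1))\,p^{m'-1+(m-1)(n-m)}$, which is exactly the claimed bound. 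The hypothesis $m + m' \leq n$ is what guarantees the relevant Grassmannians are nonempty and that the quotient counting is valid; I would check that it is used correctly (in particular $m \geq 1$ and the line can be extended to an $m$-subspace inside all of $\F_p^n$, which is automatic, the constraint $\ell \subseteq V$ only restricting which lines we sum over).

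The main obstacle, such as it is, is purely bookkeeping: making sure the $(1+o(1))$ factors from the two Gaussian-binomial estimates combine cleanly into a single $(1+o(1))$ rather than accumulating, and confirming that the union-bound step (each $W$ counted at least once via a line it contains) does not lose anything beyond the stated constant — it only overcounts, which is harmless for an upper bound. I would also double-check the edge case $m = 1$, where the exponent $(m-1)(n-m) = 0$ and the bound reduces to the count of lines in $V$, which is consistent. No deep input is needed beyond the elementary Grassmannian cardinality asymptotics.
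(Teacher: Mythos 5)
Your proposal is correct and follows essentially the same double-counting strategy as the paper: you count pairs $(W,\ell)$ with $\ell$ a line in $W\cap V$, while the paper counts pairs $(W,(w_1,\dots,w_m))$ with $w_1\in V$ a first vector of a basis of $W$; your line-based version is simply the projectivized form of the same argument, and the Grassmannian cardinality you invoke for $|\{W\supseteq\ell\}|$ is exactly what the paper obtains by dividing its vector-tuple count by the count of bases. Both give the same bound with the same $(1+o(1))$ bookkeeping.
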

\begin{proof}
    Any subspace $W$ that intersects $V$ non-trivially has a basis $(w_1, \ldots, w_m)$ with $w_1 \in V$. The number of  $m$-tuples $(w_1, \ldots, w_m) \in V \times (\F_p^n)^{m - 1}$ of linearly independent vectors can be computed as $$(p^{m'} - 1)(p^n - p) \cdot \ldots \cdot(p^n - p^{m - 1}) = (1 + o(1)) p^{m' + (m - 1)n}.$$
    On the other hand, the number of bases $(w_1, \ldots, w_m)$ of a given subspace $W$ with $w_1 \in V$ is at least 
    $$(p  - 1)(p^{m} - p) \cdot \ldots \cdot (p^{m} - p^{m - 1}) = (1 + o(1)) p^{1 + (m - 1)m}$$ 
    assuming $W$ intersects $V$ non-trivially. Therefore, the total number of such subspaces is bounded by
    $$(1 + o(1))\frac{p^{m' + (m - 1)n}}{p^{1 + (m - 1)m}} = (1 + o(1))p^{m' - 1 + (m -1)(n - m)}.$$
\end{proof}
Theorem \ref{improvementTheorem} will be deduced from the following proposition. 
\begin{proposition}\label{improvementProposition}
    Let $n \geq 2$. Fix a parameter $\delta > 0$ and integers $1 \leq m, d \leq n - 1$. Let $E \subseteq Gr(n, 0) \cup Gr(n, 1) \cup \ldots \cup Gr(n, m)$ be a set of subspaces, $K \subseteq \F_p^n$ a set of points and $k \geq 2$ an integer. Assume that the following is satisfied:
    \begin{itemize}
        \item $p^\delta \leq |K| \leq p^{d + 1 - \delta}$,
        \item for any set $S \subseteq Gr(n, 1)$ of lines in $\F_p^n$ with $|S| \leq k$ there exists $\ell \in S$ and $W \in E$ such that $W \cap \ell = 0$,
        \item For any subspace $V \subseteq \F_p^n$ with $\dim V \leq d$ there exists $W \in E$ such that $W \cap V = 0$.
    \end{itemize}
    Then for sufficiently small $\varepsilon = \varepsilon(n, \delta)> 0$ depending only on $n$ and $\delta$ we have 
    $$\exists W \in E: |\pi^W(K)| \gtrsim |K|^\frac{d}{1 + dm}k^\varepsilon.$$
\end{proposition}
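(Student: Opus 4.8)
The plan is to argue by contradiction, following the template of the proof of Theorem \ref{lineProjections}. So suppose $|\pi^W(K)| \le M$ for every $W \in E$, where $M$ is an integer with $M \not \gtrapprox |K|^{d/(1+dm)}k^{\varepsilon}$, and aim for a contradiction once $\varepsilon = \varepsilon(n,\delta)$ is small enough.

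The first move uses only the third hypothesis, and is independent of $d$ and $k$. That hypothesis forces $\bigcap_{W \in E} W = 0$ (a nonzero subspace of the intersection, truncated to dimension $\le d$, would contradict it), so — exactly as in the proof of Theorem \ref{lineProjections} — I can pick $W_1,\dots,W_n \in E$ and lines $\ell_i \subseteq W_i^{\perp}$ such that the hyperplanes $H_i := \ell_i^{\perp} \supseteq W_i$ satisfy $\bigcap_i H_i = 0$; since $W_i \subseteq H_i$ we get $|\pi^{H_i}(K)| \le |\pi^{W_i}(K)| \le M$, and then Proposition \ref{niceBasis} plus a change of coordinates gives $H_i = \{x_i = 0\}$ and $K \subseteq A_1 \times \cdots \times A_n$ with $|A_i| \le M$. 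In particular at most $M^{n-2}$ of the $2$-plane slices $K \cap (x+\operatorname{span}\{e_1,e_2\})$ are nonempty. I would also first reduce to the case $\dim W = m$ for all $W \in E$, since subspaces of smaller dimension only help (they give larger projections, by $|\pi^W(K)| \ge |K|/p^{\dim W}$).

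The core of the argument I would set up as an induction on $d$. The base case $d = 1$ (so $|K| \le p^{2-\delta}$) is a higher-dimensional, $k$-quantitative analogue of the planar projection bound. Here the second hypothesis, through its parameter $k$, is exactly what prevents $E$ from being concentrated near a low-dimensional subspace, and so guarantees that in a well-chosen coordinate $2$-plane there are $\gtrsim k^{c_1}$ pairwise distinct lines arising as traces of elements of $E$ (compare the $\gtrsim |E|^{1/(n-1)}$ distinct traces produced in the proof of Theorem \ref{lineProjections}). Covering each nonempty slice by $\le M$ translates of each such trace line, applying Stevens' bound (Theorem \ref{IncidenceBoundsTheorem}) slice by slice, and summing, I would rearrange as in Theorem \ref{lineProjections} to obtain $M \gtrapprox |K|^{1/(1+m)}k^{c_2}$. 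For the inductive step ($d-1 \Rightarrow d$), I would either find $W_0 \in E$ with $|\pi^{W_0}(K)|$ already above the claimed value — in which case we are finished — or else pass to the quotient $\F_p^{n}/W_0$, where the pushed-forward set $\pi^{W_0}(K)$ has size below $p^{d-\delta'}$; the hypotheses are inherited by the push-forward of $E$ there, the inductive hypothesis in dimension budget $d-1$ applies, and the bound is transferred back using $|\pi^{W'}(K)| \ge |\pi^{W'+W_0}(K)|$ for $W' \in E$. (For $m \ge 2$ and $d \ge 2$ there is slack: iterating Proposition \ref{sumBound} along transverse subspaces supplied by the third hypothesis already yields $M \gtrapprox |K|^{\lfloor d/m\rfloor/(\lfloor d/m\rfloor+1)}$, which exceeds $|K|^{d/(1+dm)}$, so the $k^{\varepsilon}$ can be absorbed there and the incidence step is only strictly needed when $m=1$ or $d=1$.)

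The step I expect to be the main obstacle is making Stevens' bound genuinely applicable in every slice: its three conditions $|A|\le|B|$, $|A||B|^2 \le |\mathcal{L}|^3$, and — the binding one — $|A||\mathcal{L}| \lesssim p^2$ all have to be arranged simultaneously. It is precisely the assumption $|K| \le p^{d+1-\delta}$, together with a careful accounting of how many coordinate directions to freeze before slicing and how many of the $k$ directions to retain, that buys $|A||\mathcal{L}| \lesssim p^2$; carrying this out while still keeping $\gtrsim k^{c}$ distinct directions through the dimension reduction (for which Proposition \ref{subspaceCounting} is the natural counting tool) is the delicate part, and it is also what determines the admissible range of $k$. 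A secondary nuisance is controlling the powers of $\log p$ produced by dyadic pigeonholing across the $O(1)$ inductive steps, so that the final $\lessapprox$-type inequality can be cleaned into the stated $\gtrsim$.
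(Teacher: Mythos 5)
Your inductive step has a genuine gap that inverts the logic the paper needs. You propose: if $|\pi^{W_0}(K)|$ is below the target, pass to the quotient $\F_p^n/W_0$, apply the inductive hypothesis with budget $d-1$ to $\pi^{W_0}(K)$, and transfer back via $|\pi^{W'}(K)| \geq |\pi^{W'+W_0}(K)|$. But the inductive hypothesis requires $|\pi^{W_0}(K)| \geq p^{\delta'}$, and nothing prevents $|\pi^{W_0}(K)|$ from being tiny (even a single point, if $K$ sits in one coset of $W_0$). Worse, even when the inductive hypothesis can be invoked, it produces a lower bound on $|\pi^{W'+W_0}(K)|$ of the form $|\pi^{W_0}(K)|^{(d-1)/(1+(d-1)m')}k^{\varepsilon'}$ for some $m' \le m$; since $|\pi^{W_0}(K)|$ was \emph{assumed small} (below $|K|^{d/(1+dm)}k^\varepsilon$) and the exponent is $<1$, this cannot recover $|K|^{d/(1+dm)}k^\varepsilon$. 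The paper goes the other way: in its Case~1 (when some $W_0 \in E$ has $\dim W_0 \geq 2$) it restricts to a \emph{large fiber} $\tilde K := K \cap (x + W_0)$, whose size $\gtrsim |K|/|\pi^{W_0}(K)| \geq |K|^{(1+d(m-1))/(1+dm)}k^{-\varepsilon}$ is controlled from below, and applies induction on the \emph{ambient dimension} $n$ (which drops to $\dim W_0$) with the trace family $\tilde E := \{W \cap W_0 : W \in E,\ W_0 \not\subseteq W\}$, whose maximal subspace dimension drops to $m-1$. The exponent arithmetic then closes precisely because the fiber is large, not small.

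The second gap is the incidence step. You want to reduce to $\dim W = m$ for all $W$ and then slice by coordinate $2$-planes and apply Stevens' bound. But elements $W \in E$ of dimension between $1$ and $n-2$ generically intersect a fixed $2$-plane only at the origin, so their translates do not restrict to lines in that $2$-plane, and Theorem~\ref{IncidenceBoundsTheorem} has nothing to be applied to. The paper's Theorem~\ref{lineProjections} argument works precisely because there $E$ consists of \emph{hyperplanes}, which do trace out lines in a $2$-plane. In the hard residual case of Proposition~\ref{improvementProposition} (its Case~2, where $E \subseteq Gr(n,1)$) the paper does not attempt a direct incidence argument: it sets $V := \operatorname{span} E$, observes $\dim V \geq d+1$ from the third hypothesis, and applies the already-proved Theorem~\ref{BourgainTypeProjections} (with $m=1$, where the coprimality hypothesis $\gcd(\dim V, 1)=1$ is automatic) inside each slice $K \cap (x+V)$; this is where the $k^\varepsilon$ gain actually comes from. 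Your proposal never invokes Theorem~\ref{BourgainTypeProjections}, which is the entire point of the section. Finally, the preliminary ``reduce to $\dim W = m$'' step is not justified: discarding lower-dimensional $W$ can destroy the second and third hypotheses, and the trivial bound $|\pi^W(K)| \geq |K|/p^{\dim W}$ is usually far below the target and so does not let you ignore those $W$.
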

\begin{remark}
    The above assumptions on $E$ are satisfied if, for example, $0 \in E$.
\end{remark}
\begin{proof}
    We will prove this by induction on $n$. When $n = 2$ the result follows from Theorem \ref{lineProjections}. Assume that $n \geq 3$ and let $\varepsilon  = \varepsilon(n, \delta)> 0$ be sufficiently small. We consider two cases.

    \textit{Case 1}: There is $W_0 \in E$ with $\dim W_0 \geq 2$. 

    In this case, consider the projection $\pi^{W_0}(K)$ of $K$. Either it has the desired size, or there is a fiber $K \cap (x + W_0)$ of cardinality at least $|K|^\frac{1 + d(m - 1)}{1 + dm}k^{-\varepsilon}$. If the latter occurs, we can apply the induction hypothesis to this fiber. To be more specific, define $\tilde{K} := K \cap (x + W_0)$ and set $\tilde{E}:= \{W \cap W_0 \mid W \in E, W_0 \not \subseteq W\}$. The set $\tilde{E}$ has the same properties as $E$, namely that 
    \begin{itemize}
        \item for any set $S \subseteq Gr(\dim W_0, 1)$ of at most $k$ lines in $W_0$ there is $\tilde{W} \in \tilde{E}$ and $\ell \in S$ such that $\tilde{W} \cap \ell = 0$
        \item for any subspace $V \subseteq W_0$ with $\dim V \leq d$ there exists $\tilde{W} \in \tilde{E}$ such that $W \cap V = 0$.
    \end{itemize}
    As for $\tilde{K}$, we still have the lower bound $$|\tilde{K}| \geq p^{\frac{1 + d(m - 1)}{1 + dm}\delta} k^{-\varepsilon} \gtrsim p^{\delta'} \text{ for } \delta'= \delta/10$$ assuming that $\varepsilon$ is sufficiently small. The upper bound also holds: 
    $$|\tilde{K}| \leq |K| \leq p^{d + 1 - \delta} \leq p^{d + 1 - \delta'}$$
    Applying the induction hypothesis to $\tilde{K}$ (viewed as a subset of $\F_p^{\dim W_0}$) and $\tilde{E} \subseteq Gr(\dim W_0,0) \cup \ldots \cup Gr(\dim W_0, \dim W_0 - 1)$ we see that for some $\varepsilon_0 = \varepsilon_0(\dim W_0, \delta')$ we have the bound 
    $$|\pi^{\tilde{W}}(\tilde{K})| \gtrsim |\tilde{K}|^\frac{d}{1 + d(m - 1)}k^{\varepsilon_0} \geq |K|^\frac{d}{1 + dm} k^{\varepsilon_0 - \frac{\varepsilon}{1 + d(m - 1)}}$$
    The last quantity exceeds $|K|^\frac{d}{1 + dm}k^\varepsilon$ if $\varepsilon$ is sufficiently small. 

    \textit{Case 2}: $E \subseteq Gr(n, 0) \cup Gr(n, 1)$. 

    If $0 \in E$, then $|\pi^{0}(K)| = |K|$ and the result is trivially true, so we will focus on the case $E \subseteq Gr(n, 1)$. Let $V := span E$. The assumptions on $E$ imply that $\dim V \geq d + 1$. Consider the slices $K_x := K \cap (x + V)$. We may assume that all non-empty slices $K_x$ satisfy $|K_x| \geq p^{\delta'}$ for $\delta' = \frac{\delta}{10(d + 1)}$. Indeed, this is because if $|\pi^{W}(K)| \leq |K|^\frac{d}{d + 1}k^\varepsilon$ for some $W \in E$, then the total cardinality of small slices (with $|K_x| \leq p^{\delta'}$) is at most $|K|^{\frac{d}{d + 1}}k^\varepsilon p^{\delta'} \leq |K|/2$ provided $\varepsilon$ is chosen to be sufficiently small. Thus, we may discard all small slices and assume that $|K_x| \geq p^{\delta'}$ for all non-empty $K_x$.
    
    Now we we will apply Theorem \ref{BourgainTypeProjections} to each slice $K_x \subseteq x + V$. The non-degeneracy condition in the case $E \subseteq Gr(\dim V, 1)$ is equivalent to $E$ spanning the whole space $V$, which indeed holds. Therefore, viewing each $K_x$ as a subset of $V$ we see that for some small $\varepsilon = \varepsilon(\dim V, \delta') > 0$ we have the bound $$|\pi^W(K_x)| \gtrsim |K_x|^\frac{\dim V - 1}{\dim V}|E|^{\varepsilon} \geq |K_x|^\frac{d}{d + 1}k^{\varepsilon}$$  for more than half of the elements $W$ of $E$. This is because otherwise we could apply Theorem \ref{BourgainTypeProjections} to the set of $W$ for which the bound does not hold obtaining a contradiction. 
    
    Since for each $K_x$ the estimate above is true for more than half of the elements of $E$, there exists $W \in E$ such that $$ |\pi^W(K_x)| \gtrsim |K_x|^\frac{d}{d + 1}k^{\varepsilon} \text{ whenever } K_x \in \mathcal{S}$$ for a certain collection $\mathcal{S}$ of slices of total mass $\sum_{K_x \in \mathcal{S}} |K_x| \geq |K|/2$. Hence, for this $W$ we have 
    $$|\pi^W(K)| \geq \sum_{\mathcal{S}} |\pi^{W}(K_x)| \gtrsim \sum_{\mathcal{S}} |K_x|^\frac{d}{d + 1}k^{\varepsilon} \geq (\sum_{\mathcal{S}} |K_x|)^\frac{d}{d + 1}k^{\varepsilon} \gtrsim |K|^\frac{d}{d + 1}k^{\varepsilon}.$$
\end{proof}

Theorem \ref{improvementTheorem} is an almost immediate corollary.

\begin{proof}[Proof of Theorem \ref{improvementTheorem}]
    Let $k = \frac{1}{2}|E|/p^{m(n - m) - m}$. Proposition \ref{subspaceCounting} implies that for any set of at most $k$ lines the total number of elements of $Gr(n, n - m)$ containing one of these lines is at most $$(1 + o(1))k p^{m(n - m) - m} < |E|.$$
    The same proposition also shows that the total number of elements of $Gr(n, n - m)$ intersecting a given subspace of dimension at most $d$ is bounded by $$(1 + o(1))p^{m(n - m) - m + d - 1} < |E|.$$
    Hence, we can apply the previous proposition to obtain a lower bound of $$\exists W \in E: |\pi^{W}(K)| \gtrsim |K|^\frac{d}{1 + d(n - m)} k^\varepsilon \gtrsim |K|^\frac{d}{1 + d(n - m)}(|E|/p^{m(n - m) - m})^\varepsilon.$$
\end{proof}

\end{document}